\newcommand{\R}{\mathbb{R}}
\newcommand{\E}{\mathbb{E}}
\newtheorem{lemma}{Lemma}[section]
\newtheorem{proposition}{Proposition}[section]
\newtheorem{theorem}{Theorem}[section]
\newtheorem{remark}{Remark}
\newcommand\Normal{{\mathcal N}}
\newcommand{\Var}{{\rm Var}}
\newcommand{\Sphere}{\mathbb{S}}
\newcommand{\Pbb}{\mathbb{P}}
\begin{document}

\title{On the Euler-Poincar\'e Characteristic of  the  planar Berry's random wave: fluctuations and a perturbation study}

\author[1]{Elena Di Bernardino\thanks{Elena.DI\_BERNARDINO@univ-cotedazur.fr}}
\author[1]{Radomyra Shevchenko\thanks{Radomyra.SHEVCHENKO@univ-cotedazur.fr}}
\author[2]{Anna Paola Todino\thanks{annapaola.todino@uniupo.it}} 
 
\affil[1]{Universit\'{e} C\^{o}te d'Azur, Laboratoire J.A. Dieudonn\'{e}, UMR CNRS  7351,  Nice, 06108, France}
\affil[2]{ Università del Piemonte Orientale,
Dipartimento di Scienze e Innovazione Tecnologica, Alessandria, 15121, Italy}  
  
\date{}

\maketitle 
\begin{abstract}
We prove the Central Limit Theorem for the Euler-Poincar\'e characteristic of Berry's random wave model in a growing domain. {We also show Gaussian fluctuations for a class of Berry's mixture models that correspond to a perturbation of the initial random field.} Finally, some statistical applications, explicit calculations of the variance of the perturbed Berry's model and numerical investigations are provided to  support our theoretical results.
\end{abstract}

\textbf{Keywords and Phrases:} Central Limit Theorem;
Euler-Poincar\'e  characteristic;
Excursion sets;
Random plane wave;
Wiener chaos expansion. \\

\textbf{AMS Classification:} 60G60, 60F05, 33C10, 60D05.

\maketitle
 
\section{Introduction and main results}
 
\subsection{Lipschitz-Killing Curvatures and Berry's random wave model} 
The study of random fields  in $\R^2$ through the geometry of their excursion sets
has received a lot of interest in recent literature. The three geometric  additive features $\mathcal L_k$, $k=0, 1, 2$   are referred to as either Lipschitz-Killing curvatures (LKCs) in
stochastic geometry, curvature measures in  differential geometry, intrinsic volumes
or Minkowski functionals in integral and convex geometry.  Theses  functionals  are extensively used in the literature to describe the geometry or the topology of the considered excursion set.  Loosely speaking,  $\mathcal L_0$ is the Euler-Poincaré characteristic of the excursion set (i.e., the difference
between the number of connected components and the number of holes),  $\mathcal L_{1}$ is half the  perimeter length and $\mathcal L_2$ is the area.\\
 
\textbf{The planar  Berry's random wave model.}  Let us denote by $T$ a bounded rectangle contained in $\mathbb{R}^2$, with non empty interior. Obviously,   
\begin{align}\label{eq:L012T}\mathcal L_{0}(T)=1, \quad \mathcal L_{1}(T)=\frac 12 |\partial T|_1, \quad \mathcal L_{2}(T)=|T|,
\end{align}
where $\partial T$ stands for the boundary of the set $T$, $|\cdot |$ the two-dimensional Lebesgue measure  and   $|\cdot |_1$ the one-dimensional Hausdorff measure.  \smallskip

We will consider here the random excursion set above a real level $u$, associated to the Berry's random wave model, i.e., 
\begin{equation}\label{EXCSET}
A(f, T, u)= \{x \in T: f(x)\geq u\},
\end{equation}
 
where $f$ is the isotropic centered Gaussian random field  $f: \Omega \times \mathbb{R}^2 \to \mathbb{R}$  uniquely defined by the covariance function
\begin{equation} \label{covariance}
\mathbb E [f(x)f(y)]=J_0(\|x-y\|),
\end{equation}
for $x,\, y\in\mathbb \R^2$, where $J_0$ is the order $0$ Bessel function. Notice that in our context, since $f$ is the Laplacian eigenfunction with eigenvalue $1$  (see \cite{NourdinPeccatiRossi2017}), it can be expressed through its second derivatives.
 
In this paper we focus on the asymptotic behaviour as the observation domain   $T$ tends to $\R^2$, of  the  first Lipschitz-Killing curvature (the Euler-Poincaré Characteristic, EPC) of the excursion set \eqref{EXCSET} for the Berry's random wave model in $\R^2$ defined in \eqref{covariance}. More precisely,  as we  are interested in the asymptotics as  $T \nearrow \R^2$,  instead of considering the EPC of the excursion of $f$ 
above $u$, we consider the \textit{modified Euler characteristic},   inspired by \cite{AT07} Lemma 11.7.1,  \cite{EstradeLeon16-AoP} Section 1, and \cite{dibernardino2017}.  Roughly speaking, by applying Morse's theorem, both notions coincide on  $\mathring T$, the interior of the domain $T$. Indeed, the Euler characteristic of $A(f, T, u)$ is equal to a sum of two terms (see \cite{AT07} Chapter 9, for instance).  The first one only depends on the restriction of $f$ to $\mathring T$, the second one exclusively depends on the behaviour of $f$ on the $l$-dimensional faces of $T$, with $0 \le l < 2$. From now on, we focus on the first term, denoted $\varphi(f,T,u)$,  and  defined in terms of number of local extrema of $f$ in $\mathring T$ above $u$,  as  
\begin{equation}\label{modifEPC}
\varphi(f,T,u)= \#\{\mbox{local extrema of $f$ above $u$ in $\mathring T$}\} - \#\{\mbox{local saddle points of $f$ above $u$ in $\mathring T$}\}.  
\end{equation}
  
\textbf{On the literature of Central limit theorems (CLT) for LKCs.} Recently, the asymptotic Gaussian fluctuations of the LKCs  of excursion sets (at any fixed threshold  $u$)  have  been proven in the case of a  Gaussian random field when the observation domain  grows to the whole Euclidean space. The interested reader is referred for instance to  \cite{bulinski2012central}, \cite{PHAM2013} for  CLTs for the volume. In \cite{KV18} and \cite{MULLER20171040} the authors prove a CLT for any Minkowski functional of excursion sets in the general framework of stationary Gaussian fields whose covariance function is decreasing fast enough at infinity 
(see also \cite{Spodarev13} for survey). On the same Gaussian setting for growing domain  and fast-decay covariance, in \cite{EstradeLeon16-AoP} and \cite{dibernardino2017} the  asymptotic fluctuations for  the  Euler-Poincar\'e characteristic is proved. In this central limit  theorems literature, the LKCs exhibit a variance asymptotically proportional to  the  volume of the growing domain. However, the Berry’s random wave model does not fall inside these studies. Indeed, the covariance function in \eqref{covariance} does not satisfy the assumptions required in these works, since it does not belong to $L^1(\mathbb{R}^2)$.  Instead of the class of  short-range covariance of the aforementioned literature, we work here with an intermediate-range covariance,  i.e, the Bessel function $J_0$.\\

For Berry's random wave model on $\mathbb R^2$, several results have been shown for specific geometric functionals. The nodal sets  (i.e., excursion sets at 0 level)   of Berry’s   random waves  in $\R^2$ have been studied in \cite{NourdinPeccatiRossi2017} in the    high-energy limit  setting  and their results rigorously confirm the asymptotic behaviour for the variances of the nodal length derived in \cite{Berry2002}. The obtained  asymptotics, which  can be  naturally reformulated in terms of the nodal statistics of a single random wave restricted to a compact domain diverging to the whole plane, show  that  the variance of the nodal length on a domain $T$ on  $\R^2$ is asymptotically proportional to  area($T$)$\log$(area($T$)) as $T$ grows up to $\R^2$ (see Theorem 1.1 in \cite{NourdinPeccatiRossi2017}). Complements of this theory can be found also in \cite{Vidotto}.   In \cite{Smutek} the author investigates the fluctuations of the nodal number of two independent real Berry Random Waves, with distinct energies. Furthermore, in  \cite{BCW20} critical points  for the planar Berry’s random wave model have been investigated, while the excursion area  can be derived by using the \textit{polyspectra} (stochastic terms in the Wiener chaos decomposition of integral functionals) in  \cite{GMT23} and the asymptotic behavior of a general class of functionals in  \cite{Maini2022}. In  \cite{Estrade2020AnisotropicGW} anisotropic random waves are considered in any dimension. 
   
In \cite{DalmaoEstradeLeon} the authors study the expected length  nodal lines of  Berry’s random waves model in $\R^3$ for a long-range  power law covariance model. They explicitly show how the decay of the covariance   impacts on the  unusual normalizing power of the volume of the growing domain in the behaviour of the asymptotic variance  (see Proposition 3.5 in \cite{DalmaoEstradeLeon}). In this long-ranged dependence, they also exhibit a non-Gaussian limit,  which is in hard contrast with the classical short-range covariance situations.
\\ 
  
 A strongly related problem is the study of the Gaussian Laplace eigenfunctions on the unit sphere $\Sphere^2$. Namely, for $\ell-$th energy level, such centred Gaussian random fields $\{T_\ell (x),\, x\in\Sphere^2\}$ have the covariance function
$$\mathbb E[T_\ell(x)T_\ell(y)]=P_\ell (\cos(\text{dist}(x,y))),$$
 where $P_\ell$ is the $\ell-$th Legendre polynomial and $\text{dist}(x,y)$ is the geodesic distance on the sphere, i.e., $dist(x,y)=\arccos(\langle x,y\rangle)$, where $\langle x,y\rangle$ is the standard scalar product in $\mathbb{R}^3$. For large $\ell$ the covariance function locally approximates $J_0$ by virtue of Hilb's asymptotics (\cite{Szego}, Theorem 8.21.12), and there is hope to relate results for $T_\ell$ to corresponding questions for the planar field $f$ in \eqref{covariance}. This is a special case of Berry's famous conjecture (see \cite{Berry1977}) stating that the high energy behaviour of Laplace eigenfunctions is universal across ``generic'' Riemannian manifolds. If proven, this bridge would be extremely useful, because the symmetry of $\Sphere^2$ has been fruitfully exploited for the study of geometric functionals in recent years. The asymptotic behaviour of the nodal length of random  spherical harmonics  is given  in   \cite{MRW20} and the CLT for shrinking caps  is established in   \cite{AnnaPaolaTodino}.  In \cite{CMW16b} the authors study the limiting distribution of critical points and extrema of random spherical harmonics, in the high energy limit.  The correlation between the total number of critical points of random spherical harmonics and the number of critical points with value in any real interval is studied in \cite{CT22}. In \cite{CMWa} a precise expression for the asymptotic variance of the Euler-Poincar\'e characteristic for excursion sets of random spherical harmonics is presented. A quantitative CLT in this high-energy limit  have been investigated in \cite{CM18}. Remarkably, for almost all levels the behaviour of the three LKC on $\Sphere^2$ is determined by $H_2(T_\ell)$, the second Wiener chaos component of the initial field $T_\ell$. More precisely (see \cite{domenico}), for $k=0,1,2$,
 \begin{equation}\label{lkc-sphere}
     \mathcal L_k(A(T_\ell, \Sphere^2, u))- \mathbb E[\mathcal L_k(A(T_\ell, \Sphere^2, u))] \stackrel{\ell\to \infty}{\sim} c_k(u)(\lambda_\ell)^{(2-k)/2}\int_{\mathbb S^2} H_2(T_\ell (x))dx,
 \end{equation}
 where $\lambda_\ell$ is the eigenvalue at the energy level $\ell$, and the explicit coefficients $c_k$ are such that $c_k(u)=0$ iff $u=0$ for $k=1,\,2$, and $c_0(u)=0$ iff $u\in\{-1,0,1\}$. If one knew by Berry's conjecture that Equation \eqref{lkc-sphere} also holds on the plane, the result of \cite{GMT23} would imply that the variance of $\varphi (f, T, u)$ for $u\notin\{-1,0,1\}$ and  $f$ in \eqref{covariance} is of order $\text{area}(T)^{3/2}$ in contrast to the short-range covariance case  (see the  aforementioned literature, e.g.,    \cite{EstradeLeon16-AoP}, \cite{KV18}, \cite{MULLER20171040}).\\

 A local version of Berry's conjecture has been proven in \cite{CH}, via a coupling between a random field on a manifold and a random field on its tangent space. The authors give an estimate for the expectation and variance of the nodal length and the number of critical points for a large class of manifolds. This result has been completed in \cite{Dierickx} with a CLT for the nodal length over shrinking balls. In \cite{CH} the authors also provide global estimates for the nodal length and critical points on compact manifolds. As for precise results on the plane confirming Berry's conjecture in the sense of establishing \eqref{lkc-sphere}, one can refer to \cite{NourdinPeccatiRossi2017} for the nodal length (see also \cite{igorsurvey} for an explicit comparison between $\mathbb R^2$ and $\mathbb S^2$), and, implicitly, in \cite{GMT23} for the area.\\

 Our work confirms the prediction of $\Var(\varphi (f, T, u))$ by Berry's conjecture, determines the precise normalising constant as well as a CLT, concludes thereby the study of Berry's random wave LKC on $\mathbb R^2$, and provides an instance of a change in behaviour compared to the short-range covariance case. Based on this result, we can find precise asymptotics for a randomly perturbed random wave field. The next section is dedicated to details and implications of our results.\\

 \textbf{Some conventions.} For the rest of the paper, we assume that all random objects are
defined on a common probability space $(\Omega, F, \Pbb)$, with $\E$ denoting expectation with respect to $\Pbb$. In the whole paper $\phi$ will be  the probability density function and $\Phi$  the cumulative distribution function of the standard Gaussian  random variable $\Normal(0,1)$.  Given two positive sequences $a_n$ and $b_n$, we write  $a_n\sim b_n$ if  $\lim_{n \to \infty} \frac{a_n}{b_n}=1$.  We use the symbol ${\overset{\mathcal L}{\longrightarrow}}$ to denote convergence in distribution. 
 
\subsection{Main results} 
Our first goal is to establish the Central Limit Theorem for the  modified Euler-Poincaré Characteristic of the excursion above $u\in\mathbb R$ in the Berry’s model in \eqref{covariance} in a growing domain $T_N=[-N,N]^2$, with $N$ a positive integer, with  $N \to \infty$ (see Theorem \ref{TCL}).  Secondly,  we define a class of  Berry’s mixture models and we study the obtained variances and the Gaussian fluctuations in this perturbed setting (see Theorem \ref{TCLperturbed}).\\ 

\textbf{Gaussian fluctuations of the EPC in the planar Berry’s model.} The desired asymptotic behaviour is obtained exploiting the $L^2(\Omega)$ expansion of modified Euler characteristic into Wiener chaoses, which are orthogonal spaces spanned by Hermite polynomials. First of all, we recall that the Hermite polynomials $H_q(x)$ are defined by $H_0(x)=1$, and for $q=2,3,\dots$  
\begin{equation}\label{Hermite}
H_q(x)=(-1)^q \frac{1}{\phi(x)} \frac{d^q\phi(x)}{dx^q},
\end{equation}
with $x \in \R$. We consider the Wiener chaos expansion of $\mathcal{\varphi}(f,T_N,u)$ (see, e.g., \cite[Section 1.2]{EstradeLeon16-AoP} and \cite{CM18}).
\begin{align}\label{Proj}
\mathcal{\varphi}(f,T_N,u)=\sum_{q=0}^{\infty} \mathcal{\varphi}(f,T_N,u)[q],
\end{align}
where $\mathcal{\varphi}(f,T_N,u)[q]$ denotes the projection of $\mathcal{\varphi}(f,T_N,u)$ on the $q$-order chaos component, that is, the space generated by the $L^2$-completion of linear combinations of the form 
$$H_{q_1}(\xi_1) \cdot H_{q_2}(\xi_2) \cdots H_{q_k}(\xi_k), \hspace{1cm} k \ge 1,$$ 
with $q_i \in \mathbb{N}$ such that $q_1+\cdots+q_k=q$, and $(\xi_1, \dots, \xi_k)$  real standard  Gaussian vector. 

It results that (after centring) a single term dominates the $L^2(\Omega)$ expansion in (\ref{Proj}), that is, the projection into the second chaos (see Proposition 
    \ref{th:variance}).  
 As explained above, this was expected since random spherical harmonics exhibit the same behaviour as Berry's random wave (see, e.g.,  \cite{igorsurvey}). 
 
\begin{theorem}\label{TCL}
Recall that $|T_N|=(2N)^2$. Let $f: \mathbb{R}^2 \to \mathbb{R}$ 
 be the isotropic standard  Berry's Gaussian random field  with covariance function in \eqref{covariance}.  Denote by $\lambda_f=\frac{1}{2}$ the second spectral moment of $f$. Then for any $u$ in $\R$, with $u \notin \{0,1, -1\}$, it holds that 	$$\frac{\varphi(f,T_N,u)-\mathbb{E}[\varphi(f,T_N,u)]}{\sqrt{  \frac{1}{2}\mathcal L_{1}(T_N)  \mathcal L_{2}(T_N)}}   \,\underset{N\to \infty}{\overset{\mathcal L}{\longrightarrow}}~\Normal(0,V(u)),$$   where
\begin{equation} \label{meanEC}
\E[\varphi(f,T_N,u)]=\mathcal L_{2}(T_N)\,(2\pi)^{-\frac{3}{2}}\,\lambda_f \,u \,e^{-u^2/2},
\end{equation} and $\Normal(0,V(u))$ stands for the centered Gaussian distribution with finite and strictly positive variance, 
 \begin{align}   \label{eqVu}   
   V(u)  = \upsilon(u)\,\frac{1}{8 \pi^{2}}   \left(\frac{1-\sqrt{2}}{3} +\ln(1 + \sqrt{2})\right),
    \end{align}
with \begin{equation}\label{g}
    \upsilon(u):=   \phi \left(u\right)^2 u^2\left(u^2-1\right)^2.\end{equation}
\end{theorem}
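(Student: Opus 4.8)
The plan is to combine the Kac--Rice type integral representation of the modified Euler characteristic with its Wiener chaos decomposition, and to reduce the statement to a Central Limit Theorem for the second chaos projection. First, approximating the Dirac mass at the origin by smooth kernels and using the finiteness of the second moment of the number of critical points of $f$ (valid since $f$ is a.s.\ of class $C^2$ with an a.s.\ non-degenerate Hessian at its critical points; see \cite{AT07}, \cite{EstradeLeon16-AoP}, \cite{dibernardino2017}), one obtains the a.s.\ and $L^2(\Omega)$ identity
\[
\varphi(f,T_N,u)=\int_{T_N}\delta_0\big(\nabla f(x)\big)\,\det\!\big(-\nabla^2 f(x)\big)\,\mathbf 1_{\{f(x)\ge u\}}\,dx,
\]
where the sign of the Hessian determinant produces exactly the difference ``extrema minus saddles'' in \eqref{modifEPC}. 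Since the integrand is a square-integrable function of the Gaussian vector $\big(f(x),\nabla f(x),\nabla^2 f(x)\big)$, expanding it in Hermite polynomials and integrating over $T_N$ gives the chaos expansion \eqref{Proj}. By Proposition~\ref{th:variance}, after centring the second chaos projection dominates: $\Var(\varphi(f,T_N,u))=\Var(\varphi(f,T_N,u)[2])\,(1+o(1))$ with $\Var(\varphi(f,T_N,u)[2])\sim\tfrac12\,\mathcal L_1(T_N)\mathcal L_2(T_N)\,V(u)$, which is of order $N^3$; here the factor $\upsilon(u)$ collects the $u$-dependent Kac--Rice coefficients and the universal constant $\tfrac{1}{8\pi^2}\big(\tfrac{1-\sqrt2}{3}+\ln(1+\sqrt2)\big)$ arises by integrating $\|z\|^{-1}$ --- the non-integrable, non-oscillating part of the squared covariances of $f$ and its derivatives, which is responsible for the super-volume growth $N^3$ --- against the geometric covariogram of the unit square. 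By Slutsky's theorem it then suffices to prove $F_N/\sqrt{\Var(F_N)}\overset{\mathcal L}{\longrightarrow}\Normal(0,1)$ for $F_N:=\varphi(f,T_N,u)[2]$.

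Next I would realise $f$ and all its first and second order derivatives as elements of the first chaos of a single isonormal Gaussian process attached to the spectral representation of \eqref{covariance}; then $F_N$ lives in the second Wiener chaos, being an explicit, $u$-dependent linear combination of the integrals over $T_N$ of the degree-two Hermite objects in the components of $\big(f,\partial_1 f,\partial_2 f,\partial_{11}f,\partial_{12}f,\partial_{22}f\big)$, so that $F_N=I_2(g_N)$ for a symmetric kernel $g_N$ with $\Var(F_N)=2\|g_N\|^2$. By the fourth moment theorem of Nualart and Peccati (equivalently, the contraction criterion on a fixed Wiener chaos), the convergence $F_N/\sqrt{\Var(F_N)}\overset{\mathcal L}{\longrightarrow}\Normal(0,1)$ holds as soon as
\[
\kappa_4(F_N)=48\,\big\|g_N\otimes_1 g_N\big\|^2=o\!\left(\Var(F_N)^2\right)=o(N^6).
\]

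The main obstacle is exactly this fourth-cumulant estimate. By the diagram formula, $\kappa_4(F_N)$ is a finite linear combination of cyclic integrals of the form
\[
\int_{T_N^4}\rho_1(x_1-x_2)\,\rho_2(x_2-x_3)\,\rho_3(x_3-x_4)\,\rho_4(x_4-x_1)\;dx_1\,dx_2\,dx_3\,dx_4,
\]
where each $\rho_k$ is a covariance between two coordinates of the above Gaussian vector, hence a fixed linear combination of $J_0$ and of its derivatives up to order four evaluated at $\|\cdot\|$. By the classical asymptotics of Bessel functions, $\rho_k(z)=c_k\,\|z\|^{-1/2}\cos\!\big(\|z\|+\vartheta_k\big)+O\!\big(\|z\|^{-3/2}\big)$ as $\|z\|\to\infty$, and a crude bound retaining only the $\|z\|^{-1/2}$ decay gives precisely $O(N^6)$, which merely matches $\Var(F_N)^2$; so the required little-$o$ must be extracted from oscillation. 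Unlike the squared covariances entering the variance, in a cycle the four arguments $x_1-x_2,\dots,x_4-x_1$ are generically in general position, and a product of four cosines of the corresponding phases has a non-oscillating (stationary) component only on lower-dimensional, degenerate configurations of $(x_1,x_2,x_3,x_4)$; splitting $T_N^4$ into a thin neighbourhood of these loci --- where a direct $L^1$ estimate suffices --- and its complement --- where repeated integration by parts / van der Corput's lemma produces an arbitrary negative power of $N$ --- would yield $\kappa_4(F_N)=o(N^6)$. This oscillatory-integral analysis, in the spirit of the treatment of the nodal length of Berry's waves in \cite{NourdinPeccatiRossi2017} and of random spherical harmonics in \cite{MRW20}, \cite{CMWa}, is the technical heart of the proof, and I expect it to be the hardest step.

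Finally, the fourth moment bound gives $F_N/\sqrt{\Var(F_N)}\overset{\mathcal L}{\longrightarrow}\Normal(0,1)$. Since $\upsilon(u)=\phi(u)^2u^2(u^2-1)^2>0$ and $\tfrac{1-\sqrt2}{3}+\ln(1+\sqrt2)>0$, the limiting variance $V(u)$ in \eqref{eqVu} is finite and strictly positive for every $u\notin\{0,1,-1\}$; hence Slutsky's theorem, together with the variance asymptotics of Proposition~\ref{th:variance} and the exact expectation \eqref{meanEC} (the Gaussian Kac--Rice formula for the expected signed critical count, with second spectral moment $\lambda_f=\tfrac12$), yields
\[
\frac{\varphi(f,T_N,u)-\E[\varphi(f,T_N,u)]}{\sqrt{\tfrac12\,\mathcal L_1(T_N)\mathcal L_2(T_N)}}\;\overset{\mathcal L}{\longrightarrow}\;\Normal(0,V(u)).
\]
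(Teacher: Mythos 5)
Your overall architecture (Kac--Rice representation, chaos expansion, domination of the second chaos via Proposition~\ref{th:variance}, reduction by Slutsky to a CLT for the second-chaos projection) coincides with the paper up to that point, but your final step takes a genuinely different route. You propose to write the second-chaos projection as $I_2(g_N)$ and invoke the fourth moment theorem, reducing everything to $\kappa_4(F_N)=48\|g_N\otimes_1 g_N\|^2=o(N^6)$. The paper instead proves (Proposition~\ref{Corr}) that $\varphi(f,T_N,u)[2]$ is asymptotically fully correlated with $\mathcal S(f,N)=\int_{T_N}H_2(f(x))\,dx$ --- a pure covariance computation using the explicit coefficients and the $g_i$ asymptotics --- and then outsources the CLT for $\mathcal S(f,N)$ to the spectral central limit theorem of \cite{Maini2022}, whose only hypothesis to check is the decay $|\mathcal F[1_{[-1,1]^2}](x)|=O(\|x\|^{-1})$. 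What the paper's route buys is precisely the avoidance of your hardest step; what your route would buy, if completed, is a self-contained quantitative argument not relying on \cite{Maini2022}.

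That said, the pivotal fourth-cumulant estimate in your proposal is a programme, not a proof, and this is where I would locate a genuine gap. As you yourself compute, the crude bound using only the $\|z\|^{-1/2}$ decay of the covariances gives exactly $O(N^6)$, i.e.\ the same order as $\Var(F_N)^2$, so the entire conclusion hinges on extracting cancellation from the oscillating factors $\cos(\|x_i-x_{i+1}\|+\vartheta_i)$ over $T_N^4$. The stationary sets of the phases $\sum_i\pm\|x_i-x_{i+1}\|$ are unions of degenerate configurations (collinear and reflected quadruples) along which the Hessian of the phase is singular, so a naive van der Corput or non-stationary-phase argument does not apply uniformly, and the splitting into a ``thin neighbourhood plus complement'' requires a quantitative control of how the phase degenerates near these loci; this is the same difficulty that makes the fourth-chaos analysis in \cite{NourdinPeccatiRossi2017} lengthy. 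Since the fourth moment theorem makes $\kappa_4(F_N)=o(N^6)$ equivalent to the CLT you are trying to prove, asserting it without carrying out this analysis is circular as it stands. To close the gap you would either need to execute the oscillatory-integral estimate in detail, or switch to the spectral side, where for the uniform measure on the unit circle the contraction norm can be controlled via the self-convolution of the spectral measure --- which is essentially the content of the criterion of \cite{Maini2022} that the paper applies.
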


Section \ref{BerrySection} is devoted to the proof of Theorem  \ref{TCL}.

\begin{remark}\label{proofstategy}
{Although this statement, and in particular the dependence of the variance on the level $u$, is quite similar to the spherical case studied in \cite{CM18} and \cite{CMWa}, our proof strategy is different. In the above references, the authors rely on the Kac-Rice formula type statement to calculate the variance and then compute the second chaos variance that turns out to be the same in the high-frequency asymptotics, allowing them to ignore all the other terms for the CLT. In contrast to this, we compute the order of the variance of each chaos component separately, which requires in particular a careful treatment of the first chaos. This allows us to see ``from below'' that the variance is dominated by a single component. Moreover, contrary to the spherical case, we cannot rely on a representation of the field and its derivatives as a linear combination of independent random variables. Instead, we show that there is perfect asymptotic correlation between the second chaos component and its part containing only the contribution of the initial field $f$, which allows us to obtain a CLT as an application of the spectral CLT recently shown by \cite{Maini2022}.}
\end{remark}

\textbf{Gaussian fluctuations of the EPC in a class of perturbed Berry’s models.}  Our second aim is to establish the Central Limit Theorem for the  modified Euler Poincaré Characteristic $\varphi$ in the case of a \textit{perturbed  Berry’s model}.  

We will study Berry mixture model $\{f_\Lambda(x)\,:\,x\in T\}$, with $T$ 
 fixed compact domain   in $\R^2$, that have the following general structure:
\begin{equation} \label{Perturbation}
f_\Lambda(x)=g(f(x),\Lambda), \, \mbox{ with } \{f(x)\} \perp \Lambda, \, x\in T, 
\end{equation}
where  $\Lambda$ is a shape variable with suitable  properties and $g$ is a certain link function,  such that  $g(\cdot,\lambda)$ is strictly increasing for any $\lambda$. 

Specifically, location mixtures arise for $g(w,\lambda)=\lambda w$ (called \emph{Gaussian scale mixture}), whereas  scale mixtures are obtained by setting   $g(w,\lambda)=w+\lambda$  (\emph{Gaussian location mixtures}).  In the first case $\Lambda > 0$ can be viewed as  a random standard deviation parameter embedded in the Gaussian 
 random field $f$, the second one as a  random mean  Gaussian model. 
   
Such processes have received considerable attention in the recent spatial statistics literature due to their ability to
account for asymmetric lower and upper tails, and for extremal dependence that is stronger than in the purely Gaussian case
(see \emph{e.g.}, \cite{Krupskii.al.2018}). In spatial extreme-value theory, they lead to extensively studied limit processes of Brown--Resnick type~(see \emph{e.g.}, \cite{Kabluchko.Schlather.deHaan.2009,Wadsworth.Tawn.2014,Dombry.al.2016}).
 
This setting allows us to introduce the inverse function $h(\cdot,\lambda)$ of $g(\cdot,\lambda)$ so that from \eqref{EXCSET} we have the following almost sure identity for the excursion set  above the  level $u$:   
\begin{equation}\label{PertSET}
A(f_\Lambda, T_N, u)=  A(f, T_N, h(u,\Lambda)). 
\end{equation} 

Using the Central Limit Theorem \ref{TCL} we derive the asymptotic behaviour of the variance of $\varphi(f_\Lambda, T_N, u)$ in the perturbed Berry’s model introduced in \eqref{Perturbation}  and the Gaussian fluctuations as $T_N \nearrow  \mathbb{R}^2$, i.e., $N \to \infty$ (see Theorem \ref{TCLperturbed} below).  For any function $\zeta$ such that $\mathbb{E}[\zeta(A(f_\Lambda,T_N,u))]<\infty$, we can write
$$\mathbb{E}[\zeta(A(f_\Lambda,T_N,u))]=\mathbb{E}[\mathbb{E}[\zeta(A(f,T_N,h(u,\Lambda)))|\Lambda]].$$

\begin{theorem}\label{TCLperturbed} 
Recall that $|T_N|=(2N)^2$. Let $f: \mathbb{R}^2 \to \mathbb{R}$ 
 be the isotropic standard  Berry's Gaussian random field  with covariance function in \eqref{covariance}. Let 
 $\{f_\Lambda(x)\,:\,x\in T_N\}$ be the Berry mixture model in  \eqref{Perturbation}. 
Denote by $\lambda_f=\frac{1}{2}$ the second spectral moment of $f$.   Then, for any $u \in \R$,  such that $\mathbb{E}\left[\upsilon\left(h(u,\Lambda)\right)\right] \neq 0$, with $\upsilon$ as in \eqref{g}, 
it holds that 	
    $$\frac{\mathcal{\varphi}(f,T_N,h(u,{\Lambda}))-\mathbb{ E}[\mathcal{\varphi}(f,T_N,h(u,{\Lambda}))]}{\sqrt{\frac{1}{2}\mathcal L_{1}(T_N)  \mathcal L_{2}(T_N)}}    \,\underset{N\to \infty}{\overset{\mathcal L}{\longrightarrow}}~\mathcal{N}(0,\,V_\Lambda(u)),$$
 where  
\begin{equation}\label{meanECperturbed}\mathbb{E}[\varphi(A(f_\Lambda,T_N,u))]= \mathcal{L}_2(T_N) \, \lambda_f  \, \mathbb{E}[\rho_2(h(u,\Lambda))],\end{equation} 
  $\rho_2(u):=(2\pi)^{-3/2} e^{-u^2/2} H_1(u)= (2\pi)^{-1/2} \phi(u)\,u$ and 
\begin{equation}\label{variancePerturbed}
V_\Lambda (u)= \mathbb{ E}\left[\upsilon\left(h(u,\Lambda)\right)\right] \frac{1}{8\pi^2} \left(\frac{1-\sqrt{2}}{3}+\ln(1+\sqrt{2})\right),
\end{equation}
where  $h(\cdot,\lambda)$ is the inverse function   of $g(\cdot,\lambda)$. 
    \end{theorem}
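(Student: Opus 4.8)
\emph{Proof strategy.} The plan is to deduce the statement from Theorem~\ref{TCL} by conditioning on the shape variable $\Lambda$. By \eqref{PertSET} there is the almost sure identity $\varphi(f_\Lambda,T_N,u)=\varphi(f,T_N,h(u,\Lambda))$, and since $\{f(x)\}_x$ is independent of $\Lambda$, the conditional law of $\varphi(f,T_N,h(u,\Lambda))$ given $\Lambda=\lambda$ coincides with the unconditional law of $\varphi(f,T_N,v)$ at the deterministic level $v=h(u,\lambda)$. For $v\notin\{0,1,-1\}$ Theorem~\ref{TCL} applies directly; for $v\in\{0,1,-1\}$ one has $\upsilon(v)=0$, the second chaos contribution to $\varphi(f,T_N,v)$ disappears, and the chaos-by-chaos estimates behind Theorem~\ref{TCL} show that all remaining components are of order $o\big(\sqrt{\mathcal L_1(T_N)\mathcal L_2(T_N)}\big)$, so that the normalised statistic tends to $0$ in probability there, consistently with $V(v)=0$. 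Hence, writing $m_N(v):=\E[\varphi(f,T_N,v)]=\mathcal L_2(T_N)\,\lambda_f\,\rho_2(v)$ (cf.~\eqref{meanEC}), for every fixed $\lambda$ one gets
\[
\frac{\varphi(f,T_N,h(u,\lambda))-m_N(h(u,\lambda))}{\sqrt{\tfrac12\,\mathcal L_1(T_N)\,\mathcal L_2(T_N)}}\ \overset{\mathcal L}{\longrightarrow}\ \Normal\!\big(0,\,V(h(u,\lambda))\big).
\]

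To upgrade this to a statement about the random level $h(u,\Lambda)$, I would use the \emph{structural} output of the proof of Theorem~\ref{TCL} rather than its statement (cf.\ Remark~\ref{proofstategy}): that proof shows that in $L^2(\Omega)$
\[
\varphi(f,T_N,v)-m_N(v)=\kappa(v)\,G_N+r_N(v),\qquad \kappa(v)^2=\upsilon(v),
\]
where $G_N$ is a fixed, level-independent element of the second Wiener chaos of $f$ with $\Var(G_N)\sim\tfrac{1}{8\pi^2}\big(\tfrac{1-\sqrt2}{3}+\ln(1+\sqrt2)\big)\cdot\tfrac12\,\mathcal L_1(T_N)\,\mathcal L_2(T_N)$ and $G_N/\sqrt{\Var G_N}\overset{\mathcal L}{\longrightarrow}\Normal(0,1)$, and where $\E[r_N(v)^2]=o\big(\mathcal L_1(T_N)\mathcal L_2(T_N)\big)$ locally uniformly in $v$. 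Substituting $v=h(u,\Lambda)$ and using $\Lambda\perp f$ (so $\Lambda$ is independent of $G_N$ and of $r_N(\cdot)$), one obtains the joint convergence $\big(\kappa(h(u,\Lambda)),\,G_N/\sqrt{\Var G_N}\big)\overset{\mathcal L}{\longrightarrow}\big(\kappa(h(u,\Lambda)),\,\xi\big)$ with $\xi\sim\Normal(0,1)$ independent of $\Lambda$, and therefore the conditionally centred, normalised statistic converges in law to $\sigma\,\kappa(h(u,\Lambda))\,\xi$ with $\sigma^2=\tfrac{1}{8\pi^2}\big(\tfrac{1-\sqrt2}{3}+\ln(1+\sqrt2)\big)$. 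This limit has mean $0$ and variance $\sigma^2\E[\kappa(h(u,\Lambda))^2]=\E[\upsilon(h(u,\Lambda))]\,\tfrac{1}{8\pi^2}\big(\tfrac{1-\sqrt2}{3}+\ln(1+\sqrt2)\big)=V_\Lambda(u)$, it is nondegenerate exactly under the hypothesis $\E[\upsilon(h(u,\Lambda))]\neq0$, and it is the stated $\Normal(0,V_\Lambda(u))$ whenever $|\kappa(h(u,\Lambda))|$ is almost surely constant (in particular for degenerate $\Lambda$, recovering Theorem~\ref{TCL}); in general it is the centred variance mixture of Gaussians with mixing law that of $V(h(u,\Lambda))$. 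The mean formula \eqref{meanECperturbed} is then immediate from $\E[\varphi(f,T_N,h(u,\Lambda))]=\E[m_N(h(u,\Lambda))]$ and the expression for $\rho_2$.

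An alternative, purely distributional way to carry out the averaging over $\Lambda$ is through characteristic functions: by the conditional CLT of the first step, $\psi_N(t,\lambda):=\E\big[\exp\{it\,(\varphi(f,T_N,h(u,\lambda))-m_N(h(u,\lambda)))/\sqrt{\tfrac12\mathcal L_1(T_N)\mathcal L_2(T_N)}\}\big]\to e^{-t^2 V(h(u,\lambda))/2}$ for every $\lambda$; since $|\psi_N(t,\cdot)|\le1$, dominated convergence gives $\E[\psi_N(t,\Lambda)]\to\E\big[e^{-t^2 V(h(u,\Lambda))/2}\big]$, which identifies the limit directly.

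The hardest part should be the uniformity in the level that both routes rely on. To pass the pointwise limits through the expectation over $\Lambda$ one needs either an $L^2$-bound on $r_N(v)/\sqrt{\mathcal L_1(T_N)\mathcal L_2(T_N)}$ (resp.\ on $\psi_N(t,\cdot)-e^{-t^2V(h(u,\cdot))/2}$) that is uniform over the range of $h(u,\cdot)$ and integrable against the law of $\Lambda$, or --- through the bounded-convergence argument --- a genuine check that at the exceptional levels $v\in\{0,\pm1\}$ the normalised statistic degenerates at the scale $\sqrt{\mathcal L_1(T_N)\mathcal L_2(T_N)}$. Both reduce to revisiting the chaos estimates of Theorem~\ref{TCL} with the level kept as a free parameter and verifying that the implied constants stay locally bounded, in particular near $0,\pm1$ where the dominant chaos component changes. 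One further point worth making explicit is that the centring must be taken with respect to the conditional mean $\E[\varphi(f,T_N,h(u,\Lambda))\mid\Lambda]=m_N(h(u,\Lambda))$, which is of order $\mathcal L_2(T_N)\gg\sqrt{\mathcal L_1(T_N)\mathcal L_2(T_N)}$; this coincides with centring by the full expectation $\E[\varphi(f,T_N,h(u,\Lambda))]$ only when $\rho_2(h(u,\Lambda))$ is almost surely constant.
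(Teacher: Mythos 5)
Your core reduction is the same as the paper's: condition on $\Lambda$, use $f\perp\Lambda$ to replace the conditional law at the random level $h(u,\Lambda)$ by the unconditional law at a deterministic level, invoke Theorem~\ref{TCL}, and average over the law of $\Lambda$ by dominated convergence (the paper does this directly on distribution functions; you propose the same averaging via characteristic functions as an alternative). What you do differently is to carry the argument out at the level of the structural representation $\varphi(f,T_N,v)-m_N(v)=\kappa(v)G_N+r_N(v)$ with a level-independent second-chaos element $G_N$ (which the paper's Proposition~\ref{Corr} indeed supplies, with $G_N=\mathcal{S}(f,N)$), and this extra care surfaces two points the paper's own proof glosses over. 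First, the paper normalises \emph{inside} the conditional probability by $\sqrt{\Var(\varphi(f,T_N,h(u,\lambda)))}$, i.e.\ by the conditional standard deviation, so what its computation actually establishes is that the \emph{self-normalised} statistic is asymptotically $\Normal(0,1)$; with the deterministic normalisation $\sqrt{\tfrac12\mathcal L_1(T_N)\mathcal L_2(T_N)}$ appearing in the statement, the limit is, exactly as you say, the scale mixture $\sigma\,\kappa(h(u,\Lambda))\,\xi$, which has variance $V_\Lambda(u)$ but is Gaussian only when $\upsilon(h(u,\Lambda))$ is almost surely constant. Second, your remark on the centring is correct and important: the conditional mean fluctuates at order $\mathcal L_2(T_N)\gg\sqrt{\mathcal L_1(T_N)\mathcal L_2(T_N)}$, so centring by the unconditional expectation (as the theorem's display literally reads) would make the statistic diverge unless $\rho_2(h(u,\Lambda))$ is almost surely constant; the paper's proof implicitly switches to conditional centring when it substitutes $\Lambda=\lambda$. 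Your treatment of the exceptional levels $h(u,\lambda)\in\{0,\pm1\}$, where $V=0$ and Theorem~\ref{TCL} does not apply, likewise addresses a case the paper's blanket appeal to the unperturbed CLT for every $\lambda$ does not cover. The one step you leave open --- local uniformity in the level of the $o(\cdot)$ bounds on $r_N(v)$, needed to pass the limit through the $\Lambda$-average --- is genuinely required and is not addressed in the paper either; apart from that, your proposal is a sound and in fact more scrupulous execution of the paper's own strategy.
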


Section \ref{sec:proof main} is devoted to the proof of Theorem \ref{TCLperturbed}. 
Obviously, Theorem \ref{TCL} can be seen as a specific case of the Central Limit Theorem \ref{TCLperturbed}   for  Berry's mixture models where  $h(u, \Lambda)$  is an almost surely constant random variable.

The paper is organized as follows. In Section \ref{BerrySection} and Section \ref{sec:proof main}
we prove Theorems \ref{TCL} and  \ref{TCLperturbed}, respectively. In Section \ref{NUmeric} we present explicit computations for some parametric models for the perturbed Berry’s random wave and numerical studies.  Finally,
technical lemmas exploited in the proof of Theorem \ref{TCL} are collected in Appendix \ref{sec:technical}.
 
\section{Proof of Central Limit Theorem \ref{TCL}}\label{BerrySection}
 
 The first moment in \eqref{meanEC}  can be obtained via the Rice formulas for the factorial moments of the number of local maxima above $u$,   the number of local minima above $u$ and  the number of local saddle points above $u$  (see for instance \cite{AT07} Chapter 11 or \cite{AW} Chapter 6).  
 
In Proposition \ref{th:variance} below, we compute the asymptotic variance of the EPC, exploiting the Wiener chaos decomposition in \eqref{Proj}, and conclude that the second chaos is the leading term of the series expansion. Then, in Proposition \ref{Corr}, we prove that there is asymptotically  full correlation between the Euler-Poincaré characteristic and the second chaotic projection. By combining  the variance in Proposition \ref{th:variance}  and this full correlation result in Proposition \ref{Corr}, the Central Limit Theorem \ref{TCL} follows.

\begin{proposition}
    \label{th:variance} Let $f:\Omega \times \mathbb{R}^2 \to \mathbb{R}$ 
 be the isotropic  standard  Berry's Gaussian random field  with covariance function in \eqref{covariance}.   Then for any $u$ in $\R$,    
	as $N\to \infty$, we have

$$\mathcal{\varphi}(f,T_N,u)-\mathbb{E}[\mathcal{\varphi}(f,T_N,u)]=\mathcal{\varphi}(f,T_N,u)[2] + o(\mathcal{\varphi}(f,T_N,u)[2])$$ and then 
$${(2\,N)^{-3}}\Var(\varphi(f,T_N,u))=  \upsilon(u) \frac{1}{8 \pi^2}   \left(\frac{1-\sqrt{2}}{3} +\ln(1 + \sqrt{2})\right) +O\left(\frac{1}{\sqrt{N}}\right),$$
where $\upsilon(u)$ is defined in (\ref{g}).
\end{proposition}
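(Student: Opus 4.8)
The plan is to start from a Kac--Rice integral representation of $\varphi(f,T_N,u)$, expand the integrand into Wiener chaoses, and show that the series is dominated by its second-order term. Since $f|_{T_N}$ is a.s.\ a Morse function and a local extremum (resp.\ a saddle point) of a $C^2$ function on $\R^2$ is a critical point with positive (resp.\ negative) Hessian determinant,
\[
\varphi(f,T_N,u)=\sum_{x\in\mathring T_N:\ \nabla f(x)=0}\operatorname{sign}\!\big(\det\nabla^2 f(x)\big)\,\mathbf 1_{\{f(x)\ge u\}}=\int_{\mathring T_N}\delta_0\!\big(\nabla f(x)\big)\,\det\nabla^2 f(x)\,\mathbf 1_{\{f(x)\ge u\}}\,dx,
\]
the last equality being Kac--Rice (the Gaussian vector $W(x):=(f(x),\nabla f(x),\nabla^2 f(x))$ is non-degenerate, $J_0$ having finite spectral moments of all orders, so the usual regularisation of $\delta_0$ is licit). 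Writing $F$ for the integrand as a (generalised) function of $W(x)$ — whose law does not depend on $x$ and which is invariant under rotations of $\R^2$ — and expanding $F$ in Hermite polynomials of the whitened components of $W(x)$ (using $\nabla f(x)\perp(f(x),\nabla^2 f(x))$), one gets $\varphi(f,T_N,u)=\sum_{q\ge0}\int_{\mathring T_N}G_q(W(x))\,dx$ with $G_q$ the $q$-th chaos projection of $F$, which is \eqref{Proj}. The term $q=0$ equals $|T_N|\,\E[F]$, the classical Kac--Rice density, and yields \eqref{meanEC}.

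Next I would show that every chaos other than the second is negligible. The integrand is invariant under $\nabla f\mapsto-\nabla f$ and under $\partial_{12}f\mapsto-\partial_{12}f$ — both symmetries of the law of $W(x)$, since $\nabla f$ and $\partial_{12}f$ are independent of the remaining components — so $G_1$ is a linear combination of $f$ and $\partial_{11}f+\partial_{22}f$; as $f$ is a Laplace eigenfunction, $\partial_{11}f+\partial_{22}f=-f$ a.s., whence $G_1(W(x))=c_1(u)\,f(x)$ for an explicit $c_1(u)$, and
\[
\Var\!\big(\varphi(f,T_N,u)[1]\big)=c_1(u)^2\int_{T_N}\!\!\int_{T_N}J_0(\|x-y\|)\,dx\,dy .
\]
This is where the slow oscillatory decay of $J_0\notin L^1(\R^2)$ must be handled carefully: after $z=x-y$ the integral equals $\int_{\R^2}J_0(\|z\|)(2N-|z_1|)_+(2N-|z_2|)_+\,dz$, and integrating by parts along each ray from the origin via $\frac{d}{dr}(rJ_1(r))=rJ_0(r)$ (the boundary terms vanish: the tent weight is $0$ on the boundary of its support and $rJ_1(r)\to0$ as $r\to0^+$) leaves an integral bounded by $O(N)\cdot\int_0^{O(N)}|rJ_1(r)|\,dr=O(N)\cdot O(N^{3/2})=O(N^{5/2})$, since $rJ_1(r)=O(\sqrt r)$. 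Thus $(2N)^{-3}\Var(\varphi(f,T_N,u)[1])=O(N^{-1/2})$. For $q\ge3$, $\operatorname{Cov}(G_q(W(x)),G_q(W(y)))$ is a finite sum of products of $q$ entries of the cross-covariance matrix of $W(x)$ and $W(y)$, each being a partial derivative of $J_0(\|x-y\|)$ and hence $O(\|x-y\|^{-1/2})$ at infinity; a contraction bound for Hermite polynomials, together with the summability in $q$ provided by the convergence of the chaos expansion (equivalently, the Kac--Rice bound on the second factorial moment), gives $|\operatorname{Cov}(G_q(W(x)),G_q(W(y)))|\le a_q\min(1,\|x-y\|^{-q/2})$ with $\sum_{q\ge3}a_q<\infty$. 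Since $\int_{T_N^2}\|x-y\|^{-3/2}\,dx\,dy=O(N^{5/2})$ and $\int_{T_N^2}\|x-y\|^{-q/2}\,dx\,dy=O(N^2)$ for $q\ge4$, this gives $\sum_{q\ge3}\Var(\varphi(f,T_N,u)[q])=O(N^{5/2})$.

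The core is the second chaos. First I would compute $G_2$ explicitly as the quadratic form with coefficients $\E[F\,H_\alpha(W(x))]$, $|\alpha|=2$ (elementary Gaussian integrals), a combination of $H_2(f)$, of $H_2$ of the whitened diagonal Hessian entries, and of products $H_1\!\cdot\! H_1$ of whitened Hessian entries. Then $\Var(\varphi(f,T_N,u)[2])=\int_{T_N}\int_{T_N}\kappa_2(\|x-y\|)\,dx\,dy$ with $\kappa_2(r):=\operatorname{Cov}(G_2(W(0)),G_2(W(z)))$ (depending only on $r=\|z\|$ by rotation invariance of $F$), a finite explicit sum of products of two cross-covariance entries; writing these through $J_0,J_0',J_0''$ and inserting the Hankel asymptotics $J_0(r)=\sqrt{2/(\pi r)}\cos(r-\pi/4)+O(r^{-3/2})$, one obtains $\kappa_2(r)=\tfrac{1}{r}(p_0+p(r))+O(r^{-3/2})$ with $p(r)$ a mean-zero trigonometric polynomial and $p_0$ an explicit constant proportional to $\upsilon(u)$. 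Passing to polar coordinates in $\int_{\R^2}\kappa_2(\|z\|)(2N-|z_1|)_+(2N-|z_2|)_+\,dz$ and rescaling $r=R(\theta)s$, $R(\theta)=2N/\max(|\cos\theta|,|\sin\theta|)$, the oscillatory part $p(r)$ disappears in the limit by Riemann--Lebesgue, the tent weight contributes $(2N)^3\int_0^1(1-s)(1-|\tan\theta|\,s)\,ds$, and the angular integration of $p_0/\max(|\cos\theta|,|\sin\theta|)$ against this weight evaluates to $\upsilon(u)\,\tfrac{1}{8\pi^2}\big(\tfrac{1-\sqrt2}{3}+\ln(1+\sqrt2)\big)$ via $\int_0^{\pi/4}\sec\theta\,d\theta=\ln(1+\sqrt2)$ and $\int_0^{\pi/4}\sin\theta/\cos^2\theta\,d\theta=\sqrt2-1$. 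Combining the three estimates with the orthogonality of the chaoses gives $\varphi(f,T_N,u)-\E[\varphi(f,T_N,u)]=\varphi(f,T_N,u)[2]+o(\varphi(f,T_N,u)[2])$ in $L^2(\Omega)$ and the announced variance asymptotics. I expect the last paragraph to be the main obstacle: pinning down the exact constant needs both the precise form of $G_2$ and a delicate asymptotic analysis of the double Bessel integral, tracking which oscillatory cross-terms cancel and how the tent weight enters at leading order; the first chaos, though elementary, is the other genuinely delicate point, because naive absolute bounds on $\int_{T_N^2}J_0$ would overshoot the scale $(2N)^3$.
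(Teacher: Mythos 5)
Your proposal is correct and follows essentially the same route as the paper: a Kac--Rice/coarea representation of $\varphi$, Wiener chaos expansion of the integrand, a dedicated treatment of the first chaos exploiting the eigenfunction identity $\Delta f=-f$ and the antiderivative $\tfrac{d}{dr}(rJ_1(r))=rJ_0(r)$ (the paper's Lemmas \ref{leading} and \ref{lem:oN3} perform the same computation componentwise on the covariances $g_2,g_4,g_7$), a contraction/diagram bound giving $o(N^3)$ for chaoses of order $q\ge 3$ and for all terms involving $\partial_1 f$, $\partial_{11}f$, $\partial_{12}f$, and an explicit evaluation of the dominant second-chaos double Bessel integral via the Hankel asymptotics, whose non-oscillatory part yields exactly the paper's constant $\tfrac1\pi\bigl(\tfrac{1-\sqrt2}{3}+\ln(1+\sqrt2)\bigr)$ (your angular integrals $\int_0^{\pi/4}\sec\theta\,d\theta=\ln(1+\sqrt2)$ and $\int_0^{\pi/4}\sin\theta/\cos^2\theta\,d\theta=\sqrt2-1$ reproduce the paper's unit-square integral in Lemma \ref{lem:dominant}). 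The only remaining work is bookkeeping you have correctly identified: the explicit second-chaos coefficients (the paper's Lemma \ref{coefficients}) needed to pin down $p_0\propto\upsilon(u)$.
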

The proof of Proposition \ref{th:variance} is postponed to Section \ref{preuveProp21}. 
    
\begin{proposition}\label{Corr}
Let
    $$\mathcal{S}(f,N):= \int_{[-N,N]^2} H_2(f(x))dx.$$ 
then we have
$$\left|\operatorname{Corr} \left(\mathcal{\varphi}(f,T_N,u)[2], \,\mathcal{S}(f,N)\right)\right|\stackrel{N\to\infty}{\to}1.$$
\end{proposition}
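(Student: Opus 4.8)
The plan is to place both $\varphi(f,T_N,u)[2]$ and $\mathcal S(f,N)$ inside the second Wiener chaos $\mathcal H_2$ of $f$ and to show that they become colinear in $L^2(\Omega)$ after dividing by $(2N)^{3/2}$; this is exactly the asserted convergence of the absolute correlation to $1$. The first step is to make $\varphi(f,T_N,u)[2]$ explicit. Writing the modified Euler characteristic through its Kac--Rice/Morse integrand $\delta_0(\nabla f(x))\,\mathbf 1_{\{f(x)\ge u\}}\det\nabla^2 f(x)$ and using that $f$ is a Laplace eigenfunction, i.e. $\partial_{11}f+\partial_{22}f=-f$ almost surely, I would set $\eta=\partial_{11}f+\tfrac{1}{2}f$ and $\zeta=\partial_{12}f$; with $\lambda_f=\tfrac{1}{2}$ the five Gaussian variables $f,\partial_1 f,\partial_2 f,\eta,\zeta$ are then independent at each fixed point, and $\det\nabla^2 f=\tfrac{1}{4}f^2-\eta^2-\zeta^2$. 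Hermite-expanding each factor and keeping the degree-two contributions leaves, for explicit scalars $\mu_1(u),\mu_2(u),\mu_3(u)$,
$$\varphi(f,T_N,u)[2]=\mu_1(u)\,\mathcal S(f,N)+\mu_2(u)\,G_1(N)+\mu_3(u)\,G_2(N),$$
where $G_1(N)=\int_{T_N}\big(H_2(\partial_1 f/\sqrt{\lambda_f})+H_2(\partial_2 f/\sqrt{\lambda_f})\big)\,dx$ comes from the $\delta_0(\nabla f)$ factor and $G_2(N)=\int_{T_N}\big(H_2(\bar\eta)+H_2(\bar\zeta)\big)\,dx$ (with $\bar\eta,\bar\zeta$ the standardised versions of $\eta,\zeta$) is the ``traceless Hessian'' term coming from $\eta^2+\zeta^2$; the structural point is that, thanks to the independence and the identity for $\det\nabla^2 f$, no mixed products of distinct Gaussians survive in this degree-two part.

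Next I would compute the $3\times 3$ matrix $M$ whose entries are the limits $(2N)^{-3}\operatorname{Cov}(G_i(N),G_j(N))$, with $G_0(N):=\mathcal S(f,N)$. By isotropy and the product formula in $\mathcal H_2$, each entry is a box integral $\int_{T_N\times T_N}K(x-y)\,dx\,dy$ where $K$ is a sum of products of two functions from $\{J_0,J_1\}$ evaluated at $\|x-y\|$, times trigonometric polynomials in the polar angle $\theta$ of $x-y$. Inserting $J_0(r)=\sqrt{2/(\pi r)}\cos(r-\tfrac{\pi}{4})+O(r^{-3/2})$ and $J_1(r)=\sqrt{2/(\pi r)}\sin(r-\tfrac{\pi}{4})+O(r^{-3/2})$ gives $K(z)=\tfrac{c(\theta)}{\pi\|z\|}+(\text{oscillatory }O(\|z\|^{-1}))+O(\|z\|^{-2})$; the last part contributes $O((2N)^2\log N)$, the oscillatory part $O((2N)^2)$ by Riemann--Lebesgue, and the main part, after the substitution $z=2Nw$, contributes $(2N)^3\cdot\tfrac{1}{\pi}\int_{[-1,1]^2}\tfrac{c(\theta(w))(1-|w_1|)(1-|w_2|)}{|w|}\,dw$. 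The decisive point is that, for the \emph{symmetric} combinations defining $G_1$ and $G_2$, the relevant sums of squared covariances are isotropic: repeated use of $\cos^2\theta+\sin^2\theta=1$ collapses every angular factor — e.g.\ the kernel of $\operatorname{Var}(G_1(N))$ reduces to $4J_0(\|z\|)^2$, that of $\operatorname{Cov}(G_0(N),G_1(N))$ to $\lambda_f^{-1}J_1(\|z\|)^2$, and likewise for $G_2$. Hence every entry of $M$ is a fixed multiple of the single constant $\mathcal I:=\int_{[-1,1]^2}\tfrac{(1-|w_1|)(1-|w_2|)}{|w|}\,dw=4\big(\tfrac{1-\sqrt{2}}{3}+\ln(1+\sqrt{2})\big)$, and one finds $M=\tfrac{2\mathcal I}{\pi}\,vv^{\top}$ with $v=(1,2,2)^{\top}$: the limiting Gram matrix has rank one.

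This rank-one structure means precisely that $\|G_1(N)-2\,\mathcal S(f,N)\|_{L^2(\Omega)}^2$ and $\|G_2(N)-2\,\mathcal S(f,N)\|_{L^2(\Omega)}^2$ are $o((2N)^3)$, so that
$$\varphi(f,T_N,u)[2]=c(u)\,\mathcal S(f,N)+R_N,\qquad \|R_N\|_{L^2(\Omega)}=o\big((2N)^{3/2}\big),\qquad c(u):=\mu_1(u)+2\mu_2(u)+2\mu_3(u),$$
and a short computation gives $c(u)$ proportional to $u(u^2-1)\phi(u)$, in agreement with Proposition \ref{th:variance} (then $c(u)^2\operatorname{Var}(\mathcal S(f,N))$ indeed has the order $\upsilon(u)\,(2N)^3$, up to the stated constant). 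Since $\operatorname{Var}(\mathcal S(f,N))\sim\tfrac{2\mathcal I}{\pi}(2N)^3$ is positive and $c(u)\neq0$ precisely for $u\notin\{0,1,-1\}$, this gives $\big|\operatorname{Corr}(\varphi(f,T_N,u)[2],\mathcal S(f,N))\big|\to1$ on that range — which is the one relevant for Theorem \ref{TCL}; for $u\in\{0,\pm1\}$ one has $c(u)=0$ and the conclusion would require the next order of the chaos expansion. The main obstacle is the computation of the third paragraph: correctly tracking all the angular (spin-$2$) factors in the covariances of $\nabla f$ and $\nabla^2 f$ and checking that they cancel into isotropic kernels, together with the routine but delicate error control isolating the $\|z\|^{-1}$-term; I would relegate the Bessel estimates and the evaluation of $\mathcal I$ to the technical lemmas of Appendix \ref{sec:technical}.
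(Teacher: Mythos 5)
Your argument is correct and rests on the same mechanism as the paper's proof: every covariance integral between degree-two Hermite monomials of $f$ and its derivatives that survives at leading order is asymptotically the same multiple of $(2N)^3\bigl(\tfrac{1-\sqrt2}{3}+\ln(1+\sqrt2)\bigr)$, so the second chaos collapses onto the single direction spanned by $\mathcal S(f,N)$. The organization, however, is genuinely different. The paper diagonalizes the pointwise covariance with the variables $Y_2,Y_5,Z$, writes the leading part $\mathcal D_2$ of $\varphi(f,T_N,u)[2]$ in that basis (which leaves a cross term $H_1(Z)H_1(Y_5)$), expands $H_2(f)$ in the same basis, and computes $\operatorname{Cov}(\mathcal D_2,\mathcal S)$ term by term with the diagram formula before comparing with the two variances from Proposition \ref{th:variance} and the explicit $\operatorname{Var}(\mathcal S)$. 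You instead take $(f,\partial_1f,\partial_2f,\eta,\zeta)$ with $\eta=\partial_{11}f+\tfrac12 f$, which makes $f$ itself one of the independent coordinates, kills all mixed $H_1\times H_1$ contributions in the degree-two projection, and reduces the claim to the rank-one structure of the limiting Gram matrix of $(\mathcal S,G_1,G_2)$; I checked the entries you assert ($v=(1,2,2)^{\top}$) against the covariances $g_1,\dots,g_7$ of the paper and they are consistent. Your version buys two things: (i) by keeping the isotropic symmetric combinations $G_1,G_2$ you sidestep the delicate point that an individual term such as $\int H_2(\widetilde{\partial_1f})$ has variance of order $N^3$ by isotropy even though its covariance kernel evaluated with both points on the $2$-axis decays fast — a subtlety that Lemma \ref{lem:5}, read literally, glosses over; (ii) you correctly flag that at $u\in\{0,\pm1\}$ the coefficient $c(u)$ vanishes and the leading-order computation degenerates to $0/0$, a restriction absent from the statement of Proposition \ref{Corr} (the paper only excludes these levels in Theorem \ref{TCL}). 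Both proofs ultimately rely on the same key technical input, namely the asymptotic equality of the box integrals of $g_2^2$, $g_4^2$, $g_7^2$ established in Lemma \ref{lem:dominant}, so your outline is a legitimate, and in places more careful, alternative write-up of the same proof rather than a new one.
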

The proof of Proposition \ref{Corr} is postponed to Section \ref{PreuveProp22}.  
\begin{proof}[ \textbf{Proof of Theorem \ref{TCL}}]
   
In Proposition \ref{th:variance} we establish that, as $N \to \infty$, and $u\ne\{0,-1,1\}$
$$\mathcal{\varphi}(f,T_N,u)-\mathbb{E}[\mathcal{\varphi}(f,T_N,u)]=\mathcal{\varphi}(f,T_N,u)[2] + o(\mathcal{\varphi}(f,T_N,u)[2]).$$


In view of Proposition \ref{Corr} the Euler-Poincaré characteristic is fully correlated in the limit to $\mathcal{S}(f,N)$, i.e.   
\begin{equation}\label{correlation1}
\lim_{N\to \infty}\text{Corr}(\mathcal{\varphi}(f,T_N,u), \mathcal{S}(f, N))=\lim_{N\to \infty}\frac{\text{Cov}(\mathcal{\varphi}(f,T_N,u), \mathcal{S}(f,N))}{\sqrt{ \text{Var}(\mathcal{\varphi}(f,T_N,u)) \text{Var}(\mathcal{S}(f,N))}} = 1.  
\end{equation}
Moreover, 
$$\lim_{N\to \infty} \frac{Var(\mathcal{\varphi}(f,T_N,u)[2])}{Var(\mathcal{\varphi}(f,T_N,u))}=1+o(1).$$ Then, by using the Wasserstein distance $d_W$ {and denoting $Z$ a standard Gaussian random variable},  we have
\begin{eqnarray*}
&&d_W\left(\frac{\mathcal{\varphi}(f,T_N,u)-\mathbb{ E}[\mathcal{\varphi}(f,T_N,u)]}{\sqrt{Var(\mathcal{\varphi}(f,T_N,u))}},Z\right) \\&&\quad \leq d_W \left(\frac{\mathcal{\varphi}(f,T_N,u)-\mathbb{ E}[\mathcal{\varphi}(f,T_N,u)]}{\sqrt{Var(\mathcal{\varphi}(f,T_N,u))}},  \frac{\mathcal{\varphi}(f,T_N,u)[2]}{\sqrt{Var(\mathcal{\varphi}(f,T_N,u))}}  \right) + d_W\left(\frac{\mathcal{\varphi}(f,T_N,u)[2]}{\sqrt{Var(\mathcal{\varphi}(f,T_N,u))}} , Z\right).
\end{eqnarray*}

Now we can bound the first term by

\begin{eqnarray*}
  d_W \biggl(\frac{\mathcal{\varphi}(f,T_N,u)-\mathbb{ E}[\mathcal{\varphi}(f,T_N,u)]}{\sqrt{Var(\mathcal{\varphi}(f,T_N,u)))}}, &&  \frac{\mathcal{\varphi}(f,T_N,u)[2]}{\sqrt{Var(\mathcal{\varphi}(f,T_N,u)}}  \biggl) \\&&\quad \leq \sqrt{\mathbb{ E}\left[ \frac{\mathcal{\varphi}(f,T_N,u)-\mathbb{ E}(\mathcal{\varphi}(f,T_N,u)))-\mathcal{\varphi}(f,T_N,u)[2]}{\sqrt{Var(\mathcal{\varphi}(f,T_N,u)))}}\right]^2},
\end{eqnarray*}
which tends to 0 as $N\to \infty$ (see    Proposition \ref{th:variance}).
Hence, we have that

\begin{eqnarray*}
d_W\left(\frac{\mathcal{\varphi}(f,T_N,u)[2]}{\sqrt{Var(\mathcal{\varphi}(f,T_N,u))}} , Z\right) = d_W(S(f,N),Z)+o(1).
\end{eqnarray*}

Finally,  $d_W(S(f,N),Z) \to 0$, applying the Central Limit Theorem in \cite[Theorem 2]{Maini2022}. Indeed, the authors show that the spectral condition required for weak convergence is satisfied for the planar Berry random field. The only thing remaining to show is a condition on the Fourier transform $\mathcal{F} $ of the rescaled domain of integration $D:=[-1,\, 1]^2$, namely
$|\mathcal{F}[1_D](x)|= O\left(\frac{1}{\|x\|}\right)$
as $\|x\|\to \infty$. We have
$|\mathcal{F}[1_D](x)| = \frac{4 \sin(x_1)\sin(x_2)}{x_1x_2},$
which satisfies the necessary condition.
\end{proof}

\subsection{Proof of Proposition \ref{th:variance}}\label{preuveProp21}

To prove Proposition \ref{th:variance} we first need to establish the $L^2$ expansion of $\varphi(f,T_N, u)$ into Wiener
chaoses. To this aim we recall a result by Estrade and Léon \cite[Proposition 1.2]{EstradeLeon16-AoP} in Lemma \ref{lem:EL} below, which allows to write the modified EPC as the limit of an integral in $L^2.$ This representation permits the Wiener chaos expansion (after normalising the vector of first and second derivatives to have unit variance). To study the behaviour of different chaos components we need to compute all the covariances between $f, \partial_i f, \partial_{ii}f$ for all $i=1,2$. These calculations are made in Lemma \ref{covs}. The asymptotic behaviour of all chaos components in large domain (i.e., $N\to \infty$) can be found in Lemma \ref{leading}, \ref{lem:5}, \ref{lem:oN3}, \ref{lem:dominant} and Remark \ref{1st-tech}, collected in  Appendix \ref{sec:technical}. From these results we establish that the leading terms of the series expansion belong all to the second chaos and they behave as $N^3$; all the other terms are $o(N^3)$. Then the asymptotic variance in Proposition \ref{th:variance} is given by the variance of the dominant terms and we conclude the proof.\\
 
The following result is a close adaptation of \cite[Proposition 1.2]{EstradeLeon16-AoP} to our setup.
  
\begin{lemma}\label{lem:EL}
The following convergence holds almost surely and in $L^2(\Omega)$  
\begin{equation}\label{L2integral}
    \varphi (f, T_N, u)= \lim_{\varepsilon \to 0^{+}}\int\limits_{[-N,N]^2} \begin{vmatrix}
       \partial_{11}f(x) &  \partial_{12}f(x)\\ \partial_{12}f(x) & \partial_{22}f(x)
   \end{vmatrix}  {1}_{[u,\infty)}\left(-(\partial_{11}f(x)+\partial_{22}f(x))\right)\delta_\varepsilon (\nabla f(x))dx.
\end{equation} 
\end{lemma}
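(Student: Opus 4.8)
The plan is to recognise \eqref{L2integral} as the Kac--Rice (coarea) representation of a \emph{signed} count of critical points, which is in essence \cite[Proposition 1.2]{EstradeLeon16-AoP}, combined with the eigenfunction relation satisfied by the Berry field in order to rewrite the level indicator. I would start from the purely deterministic identity: if $f\in C^2$ in a neighbourhood of $\overline{T}_N$, has no critical point on $\partial T_N$, and only non-degenerate critical points in $\mathring{T}_N$, then, since for a nonsingular symmetric $2\times 2$ matrix a positive determinant corresponds to equal-sign eigenvalues (a local extremum) and a negative determinant to opposite-sign eigenvalues (a saddle),
$$\varphi(f,T_N,u)=\sum_{\substack{x\in\mathring{T}_N\\ \nabla f(x)=0}}\operatorname{sign}\bigl(\det\operatorname{Hess}f(x)\bigr)\,1_{[u,\infty)}(f(x)).$$
Since $f$ is the Gaussian Laplace eigenfunction with eigenvalue $1$, one has the almost sure pointwise identity $f(x)=-(\partial_{11}f(x)+\partial_{22}f(x))$ --- equivalently $\mathbb E\bigl[(f(x)+\partial_{11}f(x)+\partial_{22}f(x))^2\bigr]=0$, a covariance computation using the differential equation solved by $J_0$ --- so that $1_{[u,\infty)}(f(x))$ may be replaced by $1_{[u,\infty)}(-(\partial_{11}f(x)+\partial_{22}f(x)))$, which is the indicator appearing in \eqref{L2integral}.

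Next I would reproduce the argument of \cite[Proposition 1.2]{EstradeLeon16-AoP}. With $\delta_\varepsilon$ an approximation of the identity at the origin of $\R^2$, the area formula applied to the map $\nabla f$ gives, almost surely and as $\varepsilon\to 0^+$,
$$\int_{T_N}g(x)\,\bigl|\det\operatorname{Hess}f(x)\bigr|\,\delta_\varepsilon(\nabla f(x))\,dx\;\longrightarrow\;\sum_{\substack{x\in T_N\\ \nabla f(x)=0}}g(x)$$
for every bounded $g$ continuous at the (almost surely finitely many) critical points of $f$; specialising to $g(x)=\operatorname{sign}(\det\operatorname{Hess}f(x))\,1_{[u,\infty)}(-(\partial_{11}f+\partial_{22}f)(x))$ and using the identity $\operatorname{sign}(a)|a|=a$ turns the right-hand side into $\varphi(f,T_N,u)$ and the left-hand side into the integral in \eqref{L2integral}. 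To make this rigorous for the Berry field I would verify the needed hypotheses: (i) $f$ is almost surely of class $C^2$ (indeed real-analytic), since $J_0$ is smooth at $0$; (ii) the Bulinskaya-type non-degeneracy statements --- almost surely $f$ has no critical point on $\partial T_N$, no degenerate critical point in $\overline{T}_N$, and $u$ is not a critical value on $\overline{T}_N$ --- which follow from the $1$-point law of $\nabla f$ being non-degenerate ($\Var(\partial_i f)=\lambda_f=\frac{1}{2}$) and from the $2$-point laws of $(\nabla f(x),\nabla f(y))$, $x\neq y$, being non-degenerate (the spectral measure, uniform on the unit circle, is not carried by a line through the origin); and (iii) finiteness of the first two factorial moments of the number of critical points of $f$ in $T_N$ via the Rice formula, which bounds the second moments of the integrals in \eqref{L2integral} uniformly in $\varepsilon$ and, together with the almost sure convergence, upgrades it to convergence in $L^2(\Omega)$.

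The step I expect to require the most care --- and where our situation genuinely departs from \cite{EstradeLeon16-AoP} --- is that the eigenfunction relation makes the full $2$-jet $(f(x),\nabla f(x),\operatorname{Hess}f(x))\in\R^6$ a \emph{degenerate} Gaussian vector, supported on the hyperplane $\{f+\partial_{11}f+\partial_{22}f=0\}$, so the non-degeneracy hypotheses cannot be invoked verbatim. I would deal with this by running the entire argument with the reduced vector $(f(x),\partial_1 f(x),\partial_2 f(x),\partial_{11}f(x),\partial_{12}f(x))$, substituting $\partial_{22}f(x)=-f(x)-\partial_{11}f(x)$ in the integrand, and verifying that this five-dimensional Gaussian vector is non-degenerate for every $x$ (its covariance matrix being computed in Lemma \ref{covs}) and that, conditionally on $\nabla f(x)=0$, the Hessian entries still admit a density --- which is all that the Bulinskaya lemmas and the Rice-formula moment bounds require. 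With these adjustments the proof of \cite[Proposition 1.2]{EstradeLeon16-AoP} transfers essentially word for word, which is why Lemma \ref{lem:EL} is only a close adaptation of it.
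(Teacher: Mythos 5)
Your argument is correct in substance but follows a genuinely different route from the paper's. You prove the convergence statement directly, by reconstructing the mechanism of \cite[Proposition 1.2]{EstradeLeon16-AoP}: the deterministic signed-count identity for Morse functions, the area formula applied to $\nabla f$, the Bulinskaya-type non-degeneracy checks, and the Rice-formula moment bounds; you also correctly isolate the one point where the Berry field departs from the short-range setting of that reference, namely that the full $2$-jet $(f,\nabla f,\nabla^2 f)$ is a degenerate Gaussian vector because $\Delta f=-f$, and you resolve it by passing to a non-degenerate five-dimensional reduction. The paper's proof of this lemma spends almost no effort on the convergence itself (it is delegated to \cite[Propositions 1.2 and 1.3]{EstradeLeon16-AoP}) and concentrates instead on what is needed downstream: the explicit Hermite--Wiener chaos expansion of the $\varepsilon$-regularized functional and the limit of its Hermite coefficients as $\varepsilon\to 0^+$. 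The paper's resolution of the degeneracy is the mirror image of yours: it keeps the five-vector $(\nabla f,\nabla^2 f)$ and eliminates $f$ by rewriting the level indicator as $1_{[u,\infty)}\left(-(\partial_{11}f+\partial_{22}f)\right)$, which is exactly the form appearing in \eqref{L2integral}; either reduction is non-degenerate and both work, though the paper's choice has the advantage that the subsequent chaos expansion is carried out in the variables it retains. One point to tighten in your write-up: almost sure convergence together with second moments that are merely \emph{uniformly bounded} in $\varepsilon$ yields convergence in $L^p$ only for $p<2$; to conclude convergence in $L^2(\Omega)$ you need the second moments of the regularized integrals to \emph{converge} to the second moment of the limit (almost sure convergence plus convergence of the $L^2$ norms then gives $L^2$ convergence), and this is precisely what the Kac--Rice formula for the second factorial moment of critical points --- whose finiteness for Berry's field is guaranteed by the analysis in \cite{BCW20} --- delivers.
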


\begin{proof}
First, note that the integral appearing in the statement of the lemma can be written through the Wiener chaos decomposition.
More precisely, following similar notation as in \cite[Section 1.2]{EstradeLeon16-AoP},
we define
\begin{equation}
\varphi(\varepsilon, T_N):= \int_{T_N}\begin{vmatrix}
\partial_{11}f(x) &  \partial_{12}f(x)\\ \partial_{12}f(x) & \partial_{22}f(x)
\end{vmatrix}  {1}_{[u,\infty)}\left(f(x))\right)\delta_\varepsilon (\nabla f(x))dx,
\end{equation}
which can be rewritten as the r.h.s in \eqref{L2integral} since $f$ is the Laplacian eigenfunction with eigenvalue $1$. Then
we consider the map $G_\varepsilon$ defined on $\mathbb{R}^5$ by
$$G_\varepsilon(x,y,z)= \delta_\varepsilon(x) \tilde{det}(y) 1_{[u,\infty)}(y), \quad (x,y) \in \mathbb{R}^2 \times \mathbb{R}^3,$$
and the map
$$G_u(y,z)= \tilde{det}(y)1_{[u,\infty)}(y), \quad y \in \mathbb{R}^3.$$
We denote by $\Sigma^X$ the covariance matrix of the 5-dimensional Gaussian vector $X(t)=(\nabla f(t), \nabla^2 f(t))$ and 
we consider $\Xi$ a $5\times 5$ matrix such that $\Xi^T\Xi=\Sigma^X$. 
We can thus write, for any $t \in \mathbb{R}^2$, $X(t)=\Xi \, Y(t)$ with $Y(t)$ a 5-dimensional standard Gaussian vector.  The matrix $\Xi$ can be factorizes into $\begin{pmatrix}
    \Xi_1 & 0\\
    0&\Xi_2
\end{pmatrix},$ where  $\Xi_1=\sqrt{\lambda_f}I_2$ with $\lambda_f$ the second spectral moment of $f$.
For $y=(\underline{y}, \overline{y}) \in \mathbb{R}^5=\mathbb{R}^2\times \mathbb{R}^{3}$, we define 
$$\tilde{G}_{\varepsilon}(y)=G_{\varepsilon}
(\Xi y)= \delta_{\varepsilon}(\Xi_1 \underline{y}) G_u(\Xi_2 \overline{y})=\delta_\varepsilon \circ \Xi_1(\underline{y}) G_u \circ \Xi_2(\overline{y}).$$  
For 
$n=(n_1,\dots, n_5) \in \mathbb N^5$ and  $y \in \mathbb{R}^5=\mathbb{R}^{2}\times \mathbb{R}^{3}$,
$${\overline{H}}_{{n}}(y)= \prod_{1\leq j\leq 5}  H_{n_j}(y_j),$$ 
with $H_{n_j}$ the Hermite polynomials recalled in \eqref{Hermite}.
Now we can write $$G_\varepsilon (y)= \sum_{q=0}^{\infty} \sum_{n \in N^5; |n|=q} \langle\tilde{G}_\varepsilon, \overline{H}_n \rangle  \overline{H}_n(y),$$ where the Hermite coefficients $\langle\tilde{G}_\varepsilon, \overline{H}_n \rangle= \mathbb{E}[{G}_\varepsilon(X) \overline{H}_n (X) ]$ can be factorized as $\langle\tilde{G}_\varepsilon, \overline{H}_n \rangle = \langle \delta_{\varepsilon} \circ \Xi_1, \overline{H}_{\underline{n}} \rangle  \langle G_u \circ \Xi_2, \overline{H}_{\overline{n}} \rangle $ given respectively by
$$ \langle \delta_{\varepsilon} \circ \Xi_1, \overline{H}_{\underline{n}} \rangle = \frac{1}{\underline{n}!} \int_{\mathbb{R}^2} \delta_{\varepsilon} (\sqrt{\lambda_f}y) \overline{H}_{\underline{n}} (y) \phi_2(y) \, dy$$ and
$$\langle G_u \circ \,\Xi_2, \overline{H}_{\Bar{n}} \rangle :=\frac{1}{\Bar{n}!} \int_{\mathbb R ^{3}} G_u(\Xi_2,\,z){\overline{H}}_{\bar{n}}(z) \phi_{3}(z)\,dz,$$
with $\phi_m,$ for $m=2,4$ the standard Gaussian density on $\mathbb{R}^m$.
We take the limit as $\varepsilon \to 0 $ to obtain the expansion of the modified EPC. We have that
$$\langle \delta_{\varepsilon} \circ \Xi_1, \overline{H}_{\underline{n}} \rangle  \to_{\varepsilon \to 0} \frac{1}{n!} (2\pi \lambda_f)^{-1} \overline{H}_{\underline{n}}(0). $$ Then, 
for $n=(\underline{n},\overline{n}) \in \mathbb{R}^2\times \mathbb{R}^{3} $, the Hermite coefficients $\langle G_u,\overline{H}_{{n}} \rangle$ are given by $$\langle G_u,\overline{H}_{{n}} \rangle =\frac{1}{\underline{n}!}(2\pi\lambda_f)^{-1} {\overline{H}}_{\underline{n}}(0) \langle G_u \circ\, \Xi_2, \Bar{n} \rangle.$$
By \cite[Proposition 1.3]{EstradeLeon16-AoP} the expansion of the modified EPC is finally given by
$$\sum_{\substack{q=0}}^\infty \,\,\sum_{{n}\in\mathbb N^5,\\ |n|=q} \langle G_u,\, \overline{H_{{n}}}\rangle h_{n},$$
where $|n|=n_1+\dots+n_5$ and
 $$h_{{n}}=\int_{[-N,N]^2} \overline{H}_{{n}}(f, \nabla f, \nabla ^2 f)(x) \,dx.$$
We refer to \cite[Section 1.2]{EstradeLeon16-AoP} for more details. 

It follows that, the chaotic decomposition in (\ref{Proj}) can be written as
\begin{equation}\label{expansion}
    \mathcal{\varphi}(f,T_N,u)-\mathbb{E}[\mathcal{\varphi}(f,T_N,u)]=\sum_{q=1}^{\infty} \mathcal{\varphi}(f,T_N,u)[q]=\sum_{\substack{q=1}}^\infty \sum_{\substack{{n}\in\mathbb N^5,\\ |{n}|=q}} \langle G_u,\, \overline{H_{{n}}}\rangle h_{{n}}.
    \end{equation}
    \end{proof}

Now we can analyse the behaviour of different chaos components $h_n$. To this aim, we will need to consider the polyspectra of the first and second order derivatives of $f$. The following technical result helps evaluate their covariances.
\begin{lemma}
    \label{covs}
    We have for $x,\, y \in \mathbb R$ such that $x=(0,0)$, $y=(0,r)$:
    \begin{eqnarray*}
    &&g_1(r):=\mathbb E [\partial_1 f(x)\partial_1 f(y)] = \frac{J_1(r)}{r}, \cr
    && g_2(r):=\mathbb E [\partial_2 f(x)\partial_2 f(y)] = \frac{J_0(r)-J_2(r)}{2},\cr
    && g_3(r):= \mathbb E [\partial_{11} f(x)\partial_{2} f(y)]=\mathbb E [\partial_1 f(x)\partial_{12} f(y)] = \frac{J_0(r)-J_2(r)}{2r}-\frac{J_1(r)}{r^2}= -\frac{J_2(r)}{r},\cr
    &&g_4(r):= \mathbb E [\partial_2 f(x)\partial_{22} f(y)] = \frac{J_3(r)-3J_1(r)}{4},\cr
    &&g_5(r):= \mathbb E [\partial_{11} f(x)\partial_{11} f(y)] = \frac{3(J_2(r)-J_0(r))}{2r^2}+\frac{3J_1(r)}{r^3}=\frac{3J_2(r)}{r^2},\cr
    &&g_6(r):= \mathbb E [\partial_{11} f(x)\partial_{22} f(y)] = \mathbb E [\partial_{12} f(x)\partial_{12} f(y)]=\frac{3J_1(r)-J_3(r)}{4r}+\frac{J_0(r)-J_2(r)}{r^2}-\frac{2J_1(r)}{r^3},\cr
    && g_7(r):=\mathbb E [\partial_{22} f(x)\partial_{22} f(y)] = \frac{3J_0(r)-4J_2(r)+J_4(r)}{8}=-\frac{(r^2-3)J_2(r)}{r^2},
    \end{eqnarray*}
    while all the other combinations are uncorrelated. The symmetric expressions for $x=(0,r)$, $y=(0,0)$ are identical up to a possible multiplication by $-1$.
\end{lemma}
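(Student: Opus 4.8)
The statement to prove is Lemma~\ref{covs}, which computes the covariances among the first and second partial derivatives of the Berry field $f$ evaluated at two points, exploiting isotropy to reduce to $x=(0,0)$ and $y=(0,r)$. Let me sketch a proof plan.

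\medskip

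\textbf{Proof plan for Lemma~\ref{covs}.}

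The starting point is the covariance $\E[f(x)f(y)] = J_0(\|x-y\|) =: \kappa(\|x-y\|)$. Writing $z = y - x$ and $r = \|z\|$, all required covariances are obtained by differentiating $\kappa(\|z\|)$ in the appropriate combination of variables, using the standard fact that for a stationary field with covariance $C(z) = \E[f(0)f(z)]$ one has $\E[\partial^\alpha f(x)\,\partial^\beta f(y)] = (-1)^{|\alpha|}\,\partial^{\alpha+\beta} C(z)\big|_{z=y-x}$. So the first step is to set up the chain rule for $r = \sqrt{z_1^2 + z_2^2}$ carefully: record $\partial_i r = z_i/r$, $\partial_{ij} r = \delta_{ij}/r - z_iz_j/r^3$, and then the first and second derivatives of $\kappa(r)$, namely $\partial_i [\kappa(r)] = \kappa'(r) z_i/r$ and $\partial_{ij}[\kappa(r)] = \kappa''(r) z_iz_j/r^2 + \kappa'(r)(\delta_{ij}/r - z_iz_j/r^3)$, and analogously up to fourth order. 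Once these are in hand, evaluating at $z = (0,r)$ (so $z_1 = 0$, $z_2 = r$) kills every term with an odd power of $z_1$, which is exactly why "all the other combinations are uncorrelated" and why the symmetric expressions pick up at most a sign. This observation should be stated explicitly, since it justifies the final sentence of the lemma.

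\medskip

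The second step is to convert derivatives of $J_0$ into Bessel functions of other orders using the classical recurrences $J_0'(r) = -J_1(r)$, $J_n'(r) = \tfrac12(J_{n-1}(r) - J_{n+1}(r))$, and $J_{n-1}(r) + J_{n+1}(r) = \tfrac{2n}{r}J_n(r)$. Each $g_i$ is then a specific linear combination of $J_0,\dots,J_4$ with coefficients that are rational in $r$; the two alternative closed forms given for $g_3$, $g_5$, $g_7$ (e.g. $g_5 = \tfrac{3(J_2-J_0)}{2r^2} + \tfrac{3J_1}{r^3} = \tfrac{3J_2}{r^2}$) are then reconciled purely by applying the recurrence $J_0 + J_2 = \tfrac{2}{r}J_1$ and $J_1 + J_3 = \tfrac{4}{r}J_2$ to eliminate $J_0$ and $J_1$ in favor of $J_2$. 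I would present the computation of one representative case in each "difficulty class": a pure first-order term ($g_1$ and $g_2$), a mixed first/second-order term ($g_3$, $g_4$), and a pure second-order term ($g_5$, $g_6$, $g_7$), and indicate that the remaining ones follow by the identical procedure. It is also worth noting that $\partial_1 f$ and $\partial_2 f$ at the same relative configuration are orthogonal (the $z_1 = 0$ evaluation again), which is why, e.g., $\E[\partial_1 f(x)\,\partial_2 f(y)] = 0$.

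\medskip

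\textbf{Main obstacle.} There is no conceptual obstacle; the difficulty is entirely bookkeeping. The genuinely error-prone point is the behaviour of the rational-in-$r$ prefactors: the expressions $\delta_{ij}/r - z_iz_j/r^3$ and their derivatives produce terms with $r$, $r^2$, $r^3$ in the denominator that must cancel against the small-$r$ vanishing of the relevant Bessel combinations, so one must be careful that the final formulas are the correct smooth functions (e.g. $J_2(r)/r^2 \to 1/8$ as $r\to 0$, matching $\E[(\partial_{11}f)^2] = 3\lambda_f^2$-type normalisations). I would double-check each $g_i$ against its value at $r=0$ using $J_n(r) \sim (r/2)^n/n!$, which pins down the constant and catches sign or factor errors. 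The cleanest exposition is therefore: (i) chain-rule identities for $r$; (ii) the general formula for $\E[\partial^\alpha f\,\partial^\beta f]$; (iii) specialisation to $z=(0,r)$ with the odd-$z_1$ vanishing remark; (iv) Bessel recurrences to simplify; (v) a sanity check at $r\to 0$.
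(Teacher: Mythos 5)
Your plan is essentially the paper's proof: the authors likewise obtain each $g_i$ by differentiating $J_0(\|x-y\|)$ directly via the chain rule (writing out only the case $g_1$ and leaving the remaining entries to the identical procedure and the Bessel recurrences), and they justify the last sentence of the lemma by the parity of the polynomial dependence on the coordinate differences. One small precision worth fixing: the vanishing of the mixed combinations does come from odd powers of $z_1$ evaluated at $z_1=0$, as you say, but the possible sign flip under swapping $x$ and $y$ comes from the parity in $z_2=\pm r$ (the paper phrases this as evenness or oddness in $x_2-y_2$), not from the $z_1$ argument.
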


\begin{proof}[Proof of Lemma \ref{covs}]
	We compute, for instance, for $g_1$:
	\begin{eqnarray*}
		&&\mathbb E [\partial_1 f(x)\partial_1 f(y)] = \partial_{x_1} \partial_{y_1}J_0\left(\left\| \begin{pmatrix}
			x_1 \\ 
			x_2
		\end{pmatrix}-  \begin{pmatrix}
			y_1 \\ 
			y_2
		\end{pmatrix}\right\|\right) = \partial_{y_1}J'_0\left(\left\|x-y\right\|\right)\frac{x_1-y_1}{\left\|x-y\right\|}\\
		&& \qquad \frac{-\|x-y\|-(y_1-x_1)\frac{x_1-y_1}{\|x-y\|}}{\|x-y\|^2}J'_0(\|x-y\|)+ \frac{(y_1-x_1)(x_1-y_1)}{\|x-y\|^2}J''_0(\|x-y\|).
	\end{eqnarray*} 
	This equals $\frac{J_1(r)}{r}$ for both $x=(0,0)$, $y=(0,r)$ and $x=(0,r)$, $y=(0,0)$.\\
	As for the symmetry property, note that all the derivatives have the form
	$$h(x_1-y_1, x_2-y_2, \|x-y\|),$$
	and we can see by induction that the dependence on $(x_2-y_2)$ is polynomial, either even or odd, depending on the number of the second component derivatives. Therefore, going from $x=(0,0)$, $y=(0,r)$ to $x=(0,r)$, $y=(0,0)$ would not change the final result if the polynomial is even, and would change only the sign if the polynomial is odd.
\end{proof}

In order to obtain the expansion (\ref{expansion}) of the modified EPC in terms of Hermite polynomials, we also need to normalise the vector of first and second order derivatives. First, let us denote
\begin{eqnarray*}
	&& Y_i := \widetilde{\partial_if(x)}:=\frac{\partial_if(x)}{\sqrt{\operatorname{Var}(\partial_if(x))}}= \partial_if(x) \sqrt{2},\\
	&& Y_3 := \widetilde{\partial_{11}f(x)}:=\frac{\partial_{11}f(x)}{\sqrt{\operatorname{Var}(\partial_{11}f(x))}}= \partial_{11}f(x) \sqrt{\frac{8}{3}},\\
	&& Y_4 := \widetilde{\partial_{12}f(x)}:=\frac{\partial_{12}f(x)}{\sqrt{\operatorname{Var}(\partial_{12}f(x))}}= \partial_{12}f(x) \sqrt{8},\\
	&& Y_5 := \widetilde{\partial_{22}f(x)}:=\frac{\partial_{22}f(x)}{\sqrt{\operatorname{Var}(\partial_{22}f(x))}}= \partial_{22}f(x) \sqrt{\frac{8}{3}},
\end{eqnarray*}
where $i=1,2$. Next, by considering the limits of $g_3$, $g_4$, and $g_6$ in Lemma \ref{covs} as $r\to 0$, we see that
$$\mathbb E[Y_3Y_5]=\frac{1}{3},$$
while the other variables are uncorrelated. By setting $Z:=\frac{3}{2\sqrt{2}}Y_3-\frac{1}{2\sqrt{2}}Y_5$, we obtain an i.i.d. vector $(Y_1, Y_2, Z,Y_4,Y_5)$, and with this notation we can write (\ref{L2integral}) as
\begin{eqnarray*}
	&& \begin{vmatrix}
		\partial_{11}f(x) &  \partial_{12}f(x)\\ \partial_{12}f(x) & \partial_{22}f(x)
	\end{vmatrix}  {1}_{[u,\infty)}\left(-(\partial_{11}f(x)+\partial_{22}f(x))\right)\delta_\varepsilon (\nabla f(x))\\
	&& = \left(\frac{3}{8}\widetilde{\partial_{11}f(x)} \widetilde{\partial_{22}f(x)}- \frac{1}{8}\widetilde{\partial_{12}f(x)}^2 \right) 1_{[u,\infty)}\left(-\sqrt{\frac{3}{8}}\left(\widetilde{\partial_{11}f(x)}+\widetilde{\partial_{22}f(x)}\right)\right)\\
	&&\quad \times \delta_\varepsilon \left(\frac{1}{\sqrt{2}} \widetilde{\partial_{1}f(x)}\right)\delta_\varepsilon \left(\frac{1}{\sqrt{2}} \widetilde{\partial_{2}f(x)}\right)\\
	&&=\left(\frac{3}{8}\left(\frac{\sqrt{8}}{3}Z+\frac{1}{3}Y_5\right)Y_5- \frac{1}{8}Y_4^2 \right) 1_{(-\infty ,-u]}\left(\frac{1}{\sqrt{3}} Z+\frac{\sqrt{2}}{\sqrt{3}}Y_5\right)\\
	&&\quad \times  \delta_\varepsilon \left(\frac{1}{\sqrt{2}} Y_1\right)\delta_\varepsilon \left(\frac{1}{\sqrt{2}} Y_2\right).
\end{eqnarray*}
Now this expression can be expanded in terms of Wiener chaoses. Note that it is isotropic (although individual terms need not be), and therefore, when computing the covariance in two different point $x$ and $y$, we can fix the coordinates at $x=(0,0)$, $y=(0,r)$ once and for all.

In Lemmas \ref{leading}, \ref{lem:5}, \ref{lem:oN3}, \ref{lem:dominant} (proved in Appendix \ref{sec:technical}) we compute the covariances that arise from the chaos decomposition. Their results imply that the only summands contributing to the leading order are given by some terms of the second chaos, namely by $H_2(Y_2)$, $H_2(Y_5)$, $H_2(Z)$, $H_1(Y_2) H_1(Y_5)$, $H_1(Y_2) H_1(Z)$, and $H_1(Z) H_1(Y_5)$.
Let us call the respective coefficients with which they appear in the chaos decomposition $c_{2}$, $c_{5}$, $c_{z}$, $c_{25}$, $c_{2z}$, and $c_{z5}$. The precise expressions for these coefficients are given in Lemma \ref{coefficients}. Then,
let 
\begin{equation}\label{D2}
    \mathcal{D}_2 (f,N):=\int_{[-N,N]^2} \frac{1}{2}c_2 H_2(Y_2(x))+\frac{1}{2} c_5 H_2(Y_5(x))+\frac{1}{2} c_z H_2(Z(x))+ c_{z5}H_1(Z(x))H_1(Y_5(x))dx
    \end{equation}
denote the leading term of the second chaotic projection of the Wiener chaos expansion of the modified EPC. We have
$$\varphi(f,T_N,u)[2]= \mathcal{D}_2(f,N)+O(N^2\sqrt{N})$$ 
and 
$$\mathcal{\varphi}(f,T_N,u)-\mathbb{E}[\mathcal{\varphi}(f,T_N,u)]=\mathcal{\varphi}(f,T_N,u)[2] + o(\mathcal{\varphi}(f,T_N,u)[2]).$$

It follows that the variance $V:=\Var(\varphi(f,T_N,u)-\mathbb{E}[\mathcal{\varphi}(f,T_N,u)])$ is asymptotically equivalent to
\begin{eqnarray*}
   &&\mathbb{E}[\mathcal{D}_2(f,N)^2]= \\&&\mathbb E \left[\left(\int_{[-N,N]^2} \frac{1}{2}c_2 H_2(Y_2(x))+\frac{1}{2} c_5 H_2(Y_5(x))+\frac{1}{2} c_z H_2(Z(x))+ c_{z5}H_1(Z(x))H_1(Y_5(x))dx\right)^2\right],
   \end{eqnarray*}
	  where the coefficients $c_2,c_5,c_z,c_{z5}$ are given in Lemma \ref{coefficients}.
	Denoting $r:=\|x-y\|$ and plugging in all the normalisations defined previously, we get
 \begin{eqnarray}\label{sum}
	  V \sim	&& \int_{[-N,N]^2}\int_{[-N,N]^2} \frac{1}{2}c_2^2 (2g_2(r))^2 + \frac{1}{2}c_2c_5 \left(\frac{4}{\sqrt{3}}g_4(r)\right)^2 +\frac{1}{2}c_2c_z \left(\frac{4}{\sqrt{3\cdot 8}}g_4(r)\right)^2
  \nonumber \cr
		&& + c_2c_{z5}\left(-\frac{4}{\sqrt{3\cdot 8}}g_4(r) \cdot \frac{4}{\sqrt{3}}g_4(r)\right) 
	 + \frac{1}{2}c_5c_2 \left(\frac{4}{\sqrt{3}}g_4(r)\right)^2 + \frac{1}{2}c_5^2 \left(\frac{8}{3}g_7(r)\right)^2 \nonumber\cr &&+ \frac{1}{2}c_5c_z \left(-\frac{\sqrt{8}}{3}g_7(r)\right)^2 + c_5c_{z5}\left(-\frac{\sqrt{8}}{3}g_7(r) \cdot \frac{8}{3}g_7(r)\right)+\frac{1}{2}c_zc_2 \left(\frac{4}{\sqrt{3\cdot 8}}g_4(r)\right)^2 \nonumber \cr &&+ \frac{1}{2}c_zc_5 \left(-\frac{\sqrt{8}}{3}g_7(r)\right)^2 + \frac{1}{2}c_z^2 \left(\frac{1}{3}g_7(r)\right)^2 + c_zc_{z5}\left(-\frac{\sqrt{8}}{3}g_7(r) \cdot \frac{1}{3}g_7(r)\right) \nonumber \cr
		&& + c_{z5}c_2 \left(-\frac{4}{\sqrt{3\cdot 8}}g_4(r) \cdot \frac{4}{\sqrt{3}}g_4(r)\right) +c_{z5}c_5 \left(-\frac{\sqrt{8}}{3}g_7(r) \cdot \frac{8}{3}g_7(r)\right) \nonumber \\
		&& + c_{z5}c_z \left(-\frac{\sqrt{8}}{3}g_7(r) \cdot \frac{1}{3}g_7(r)\right) + c_{z5}^2 \left(\frac{8}{3}g_7(r)\frac{1}{3}g_7(r)+ \left(-\frac{\sqrt{8}}{3}g_7(r)\right)\left(-\frac{\sqrt{8}}{3}g_7(r)\right)\right)dx dy \nonumber \\ .
  \end{eqnarray}
	Since
	$$\int_{[-N,N]^2}\int_{[-N,N]^2}g_2^2(r)dxdy \sim \int_{[-N,N]^2}\int_{[-N,N]^2}g_4^2(r)dxdy \sim \int_{[-N,N]^2}\int_{[-N,N]^2} g_7^2(r)dxdy$$
 (see Lemma \ref{lem:dominant}), the sum in (\ref{sum}) equals asymptotically
	$$ \frac{1}{32 \pi}\phi (u)^2 u^2(u^2-1)^2 \int_{[-N,N]^2}\int_{[-N,N]^2}g_2^2(r)dxdy  .$$

We recall the definition of the $\upsilon$ function in  \eqref{g}. 
It follows that, 
 as $N\to \infty$, the variance of the modified EPC is asymptotically equivalent to  
 $$  \frac{1}{32 \pi}\upsilon(u) \int_{[-N,N]^2}\int_{[-N,N]^2}g_2^2(\|x-y\|)dxdy.$$
 The last integral is evaluated in Lemma \ref{lem:dominant} and it is equal to
 $$4   (2N)^3\,\frac{1}{\pi} \left(\frac{1-\sqrt{2}}{3} + \ln(1 + \sqrt{2})\right).$$ This concludes the proof of Proposition \ref{th:variance}.

\subsection{Proof of Proposition \ref{Corr}}\label{PreuveProp22}

In Proposition \ref{th:variance} we proved that the Euler-Poincaré characteristic behaves as the second chaotic projection, namely
$$\mathcal{\varphi}(f,T_N,u)-\mathbb{E}[\mathcal{\varphi}(f,T_N,u)]=\mathcal{\varphi}(f,T_N,u)[2] + o(\mathcal{\varphi}(f,T_N,u)[2]) .$$ Moreover, recalling that
$$\mathcal{D}_2 (f,N)=\int_{[-N,N]^2} \frac{1}{2}c_2 H_2(Y_2(x))+\frac{1}{2} c_5 H_2(Y_5(x))+\frac{1}{2} c_z H_2(Z(x))+ c_{z5}H_1(Z(x))H_1(Y_5(x))dx,$$
(see Equation \eqref{D2}) we also proved that
$$\varphi(f,T_N,u)[2]= \mathcal{D}_2(f,N)+O(N^2\sqrt{N}).$$

We now show that the entire Euler-Poincaré functional behaves as the second chaos projection of the initial field $f$ (as it happens for all the three Lipschitz-Killing curvatures in the sphere, see for example \cite[Equation (7)]{CM18}. We note that 
    $$H_2(f(x))=\frac{1}{3}H_2(Z(x))+ \frac{2}{3}H_2(Y_5(x))+\frac{2\sqrt{2}}{3}H_1(Z(x))H_1(Y_5(x)),$$
    since $f$ is a Laplace eigenfunction. We can now compute the necessary covariances and variances explicitly, term by term. We recall the notation $r:=\|x-y\|$.    Applying the Diagram Formula (see, e.g., \cite[Section 4.3.1]{MP11}), we have that
      \begin{eqnarray}\label{cov}
   &&\mathbb E \left[\mathcal{D}_2(f,N) \mathcal{S}(f,N)\right]  
          =\int_{[-N,N]^2}\int_{[-N,N]^2} \frac{1}{2}c_2 \frac{1}{3} 2\left(\frac{4}{\sqrt{3\cdot 8}}g_4(r)\right)^2 +2\frac{1}{2}c_2\frac{2}{3}\left(\frac{4}{\sqrt{3}}g_4(r)\right)^2\nonumber\cr
        && \qquad +2c_2 \frac{\sqrt{2}}{3} \left(-\frac{4}{\sqrt{3\cdot 8}}g_4(r) \cdot \frac{4}{\sqrt{3}}g_4(r)\right)+2\frac{1}{2}c_5 \frac{1}{3}\left(-\frac{\sqrt{8}}{3}g_7(r)\right)^2\nonumber\cr
        && \qquad  +2\frac{1}{2}c_5 \frac{2}{3} \left(\frac{8}{{3}}g_7(r)\right)^2 +2 c_5 \frac{\sqrt{2}}{3}\left(-\frac{\sqrt{8}}{3}g_7(r) \cdot \frac{8}{3}g_7(r)\right)\nonumber\cr
        && \qquad +2\frac{1}{2}c_z \frac{1}{3}\left(\frac{1}{{3}}g_7(r)\right)^2 +2\frac{1}{2}c_z \frac{2}{3}\left(-\frac{\sqrt{8}}{3}g_7(r) \right)^2 +2c_z \frac{\sqrt{2}}{3} \left(-\frac{\sqrt{8}}{3}g_7(r) \cdot \frac{1}{3}g_7(r)\right) \nonumber\cr
        && \qquad +2c_{z5}\frac{1}{3} \left(-\frac{\sqrt{8}}{3}g_7(r) \cdot \frac{1}{3}g_7(r)\right) +2c_{z5}\frac{2}{3} \left(-\frac{\sqrt{8}}{3}g_7(r) \cdot \frac{8}{3}g_7(r)\right) \nonumber\cr
        && \qquad + c_{z5} \frac{\sqrt{2}}{3} 2\left(\frac{8}{3}g_7(r)\frac{1}{3}g_7(r)+ \left(-\frac{\sqrt{8}}{3}g_7(r)\right)\left(-\frac{\sqrt{8}}{3}g_7(r)\right)\right).
    \end{eqnarray}
    Due to the equivalence
    $$\int_{[-N,N]^2}\int_{[-N,N]^2}g_2^2(r)dxdy \sim \int_{[-N,N]^2}\int_{[-N,N]^2}g_4^2(r)dxdy \sim \int_{[-N,N]^2}\int_{[-N,N]^2} g_7^2(r)dxdy$$  (see   Lemma \ref{lem:dominant}),     Equation 
 \eqref{cov} is equivalent to
    \begin{equation}\label{cov2}
        \frac{1}{4 \sqrt{\pi}} \phi (u) u (u^2-1) \int_{[-N,N]^2}\int_{[-N,N]^2} g_7^2(r)dxdy.
    \end{equation}
    Moreover, by direct calculation we obtain
\begin{equation}\label{varS}
        \mathbb E  \left[\left(\int_{[-N,N]^2}  H_2(f(x))dx\right)^2\right]= 2 \int_{[-N,N]^2}\int_{[-N,N]^2} g_7^2(r)dxdy.
    \end{equation}
    The value of this integral is given in Lemma \ref{lem:dominant}, postponed to Appendix \ref{sec:technical}.   Finally, by using  the variance of $\mathcal{D}_2(f,N)$  in Proposition \ref{th:variance},  the expression in \eqref{varS} and the covariance in \eqref{cov},  we get the thesis of Proposition \ref{PreuveProp22}.
  
\section{Proof of  Central Limit Theorem  \ref{TCLperturbed}}\label{sec:proof main}

{Firstly notice that the considered  Berry random field satisfies sufficient conditions in Remark 2 in \cite{DiBEO22}. Then one can use the Gaussian Kinematic Formula   (see Theorem 13.2.1  in \cite{AT07} or   Theorem 4.8.1 in \cite{AT11}) to get the expectation in Equation \eqref{meanECperturbed} (see also Equation (23) in  \cite{DiBEO22}).}\\

We  now state and prove the following result which provides the asymptotic variance of the modified EPC of the perturbed model in \eqref{Perturbation}.
\begin{proposition}\label{th:perturbed} 
By using Equation \eqref{PertSET}, we get   
{\begin{eqnarray*}
    (2N)^{-3}\Var(\varphi(f_\Lambda, T_N,u))&=& (2N)^{-3}
\Var(\varphi(f, T_N, h(u, \Lambda))\nonumber\\&=&  
\mathbb{ E}\left[\upsilon\left(h(u,\Lambda)\right)\right] \frac{1}{8\pi^2}  
\, \left(\frac{1-\sqrt{2}}{3} +\ln(1 + \sqrt{2})\right) +O\left(\frac{1}{\sqrt{N}}\right)  ,
\end{eqnarray*}}
with  $\upsilon$  as in Equation \eqref{g}.  
\end{proposition}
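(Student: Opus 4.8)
The plan is to deduce Proposition~\ref{th:perturbed} from Proposition~\ref{th:variance} by conditioning on the shape variable $\Lambda$. The starting point is the almost sure identity \eqref{PertSET}, which gives $\varphi(f_\Lambda, T_N, u) = \varphi(f, T_N, h(u,\Lambda))$. Since $\{f(x)\} \perp \Lambda$, conditionally on $\Lambda = \lambda$ the random variable $\varphi(f, T_N, h(u,\lambda))$ has exactly the law studied in Proposition~\ref{th:variance} with fixed level $h(u,\lambda)$. So the idea is to write, via the conditional variance decomposition,
\begin{equation*}
\Var(\varphi(f, T_N, h(u,\Lambda))) = \mathbb{E}\big[\Var(\varphi(f,T_N,h(u,\Lambda)) \mid \Lambda)\big] + \Var\big(\mathbb{E}[\varphi(f,T_N,h(u,\Lambda)) \mid \Lambda]\big),
\end{equation*}
and then treat the two terms separately, showing that the first one contributes the stated limit after normalising by $(2N)^{-3}$, while the second one is of lower order.

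For the first term, Proposition~\ref{th:variance} gives, for each fixed $\lambda$,
\begin{equation*}
(2N)^{-3}\Var(\varphi(f,T_N,h(u,\lambda))) = \upsilon(h(u,\lambda)) \frac{1}{8\pi^2}\left(\frac{1-\sqrt{2}}{3} + \ln(1+\sqrt{2})\right) + O\!\left(\frac{1}{\sqrt{N}}\right).
\end{equation*}
I would then integrate this in $\lambda$ against the law of $\Lambda$. The key technical point is to justify exchanging the limit in $N$ with the expectation over $\Lambda$, i.e. a dominated-convergence / uniform-integrability argument: one needs the error term to be controlled uniformly in $\lambda$ (at least on the support of $\Lambda$), and one needs $\mathbb{E}[\upsilon(h(u,\Lambda))] < \infty$, which is part of the hypotheses via the assumption $\mathbb{E}[\upsilon(h(u,\Lambda))] \neq 0$ together with the explicit boundedness of $\upsilon$ from \eqref{g} (note $\upsilon(t) = \phi(t)^2 t^2(t^2-1)^2$ is a bounded function of $t \in \R$, so $\mathbb{E}[\upsilon(h(u,\Lambda))]$ is automatically finite). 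This boundedness is what makes the interchange clean: $\upsilon$ is bounded, so the leading term is bounded uniformly, and one only has to argue that the $O(1/\sqrt N)$ remainder is uniform on the support of $h(u,\Lambda)$, which follows by inspecting the proof of Proposition~\ref{th:variance} (the constants in the chaos estimates depend on $u$ only through polynomial-times-Gaussian factors, hence are uniformly bounded over compact level sets, and over all of $\R$ since those factors vanish at infinity).

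For the second term, I would bound $\Var(\mathbb{E}[\varphi(f,T_N,h(u,\Lambda)) \mid \Lambda])$ using \eqref{meanEC}: conditionally on $\Lambda = \lambda$, $\mathbb{E}[\varphi(f,T_N,h(u,\lambda))] = \mathcal{L}_2(T_N)(2\pi)^{-3/2}\lambda_f\, h(u,\lambda) e^{-h(u,\lambda)^2/2} = \mathcal{L}_2(T_N)\lambda_f\, \rho_2(h(u,\lambda))$. Hence this conditional expectation is $\mathcal{L}_2(T_N)\lambda_f \rho_2(h(u,\Lambda))$, a random variable whose variance is $\mathcal{L}_2(T_N)^2 \lambda_f^2 \Var(\rho_2(h(u,\Lambda))) = O(N^4)$. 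Since $\rho_2$ is also a bounded function, this is genuinely $O(N^4)$, which is negligible against $(2N)^{-3}$ only if $N^4 = o(N^3)$ — which is false. So in fact this term is \emph{not} lower order in general, and the statement must implicitly restrict to the centered quantity: indeed Proposition~\ref{th:perturbed} as written concerns $\Var(\varphi(f_\Lambda,T_N,u))$, but the CLT in Theorem~\ref{TCLperturbed} and the proof of Theorem~\ref{TCL} use $\varphi(f,T_N,h(u,\Lambda)) - \mathbb{E}[\varphi(f,T_N,h(u,\Lambda))]$, whose variance by the conditional decomposition applied to the already-centered object equals $\mathbb{E}[\Var(\varphi(f,T_N,h(u,\Lambda))\mid\Lambda)]$ exactly. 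So the cleanest route is: observe that the variance of the fully-centered functional is precisely the conditional-expectation of the conditional variance, apply the uniform version of Proposition~\ref{th:variance} inside the expectation, and conclude. The main obstacle is therefore not the algebra but making the uniformity in $\lambda$ of the $O(1/\sqrt N)$ rate in Proposition~\ref{th:variance} rigorous — specifically, tracking that every constant appearing in Lemmas~\ref{leading}, \ref{lem:5}, \ref{lem:oN3}, \ref{lem:dominant} and in Lemma~\ref{coefficients} is uniformly bounded over the range of the level, which holds because all level-dependence enters through $\phi(\cdot)$ times polynomials, quantities that are globally bounded on $\R$.
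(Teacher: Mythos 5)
Your proposal is essentially the paper's argument: condition on $\Lambda$, apply the unperturbed variance asymptotics of Proposition~\ref{th:variance} at the (now fixed) level $h(u,\lambda)$, and integrate against the law of $\Lambda$. The organizational device differs: the paper does not invoke the law of total variance but instead plugs $h(u,\Lambda)$ directly into the Wiener chaos expansion \eqref{expansion}, uses independence of $f$ and $\Lambda$ to factor $\mathbb{E}\bigl[\langle G_{h(u,\Lambda)},\overline{H_n}\rangle^2\bigr]\mathbb{E}[h_n^2]$ term by term, and interchanges the sum over chaoses with the expectation over $\Lambda$ by monotone convergence (the chaos variance contributions being nonnegative); the leading second-chaos block then produces $\frac{1}{32\pi}\mathbb{E}[\upsilon(h(u,\Lambda))]\mathbb{E}[F_N^2]$ exactly as in your first term. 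Your observation about the centering is a genuine and worthwhile point that the paper glosses over: the displayed quantity $\mathbb{E}\bigl[(\varphi(f_\Lambda,T_N,u)-\mathbb{E}[\varphi(f_\Lambda,T_N,u)])^2\bigr]$ only equals $\mathbb{E}[\Var(\varphi\mid\Lambda)]$ if the centering is by the \emph{conditional} expectation $\mathcal{L}_2(T_N)\lambda_f\rho_2(h(u,\Lambda))$; with unconditional centering the extra term $\Var(\mathbb{E}[\varphi\mid\Lambda])=\mathcal{L}_2(T_N)^2\lambda_f^2\Var(\rho_2(h(u,\Lambda)))$ is of order $N^4$ and would dominate unless $\rho_2(h(u,\Lambda))$ is a.s.\ constant. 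The proof of Theorem~\ref{TCLperturbed} confirms that conditional centering is what is meant (after conditioning on $\Lambda=\lambda$ the centering becomes $\mathbb{E}[\varphi(f,T_N,h(u,\lambda))]$), so your ``cleanest route'' is the correct reading. Your remaining concern, uniformity in $\lambda$ of the $O(N^{-1/2})$ remainder, is handled in the paper only implicitly (``the second summand is dominated by the first summand in $N$''), and your justification via the global boundedness of the Gaussian-times-polynomial level dependence of the coefficients in Lemma~\ref{coefficients} is the right way to make that step rigorous.
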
 

\begin{proof}[Proof of Proposition \ref{th:perturbed}]
In Proposition \ref{th:variance} we proved that the second chaotic projection is the leading term of the series expansion in (\ref{Proj}) and then 
\begin{eqnarray*}
    \mathcal{\varphi}(f,T_N,u)-\mathbb{E}[\mathcal{\varphi}(f,T_N,u)]&=& \mathcal{\varphi}(f,T_N,u)[2]+ \sum_{\substack{q=1},q\ne 2}^\infty \sum_{\substack{\overline{n}\in\mathbb N^5,\\ |{n}|=q}} \langle G_u,\, \overline{H_{{n}}}\rangle h_{{n}}
\end{eqnarray*}    
where
$\mathcal{\varphi}(f,T_N,u)[2]=\mathcal{D}_2(f,N)+O(N^2\sqrt{N}).$ Therefore, by independence and monotone convergence theorem, conditioning with respect to $\Lambda$ and exploiting Proposition \ref{th:variance}, in particular re-adapting computations in (\ref{sum}) to the perturbed model, we obtain, as $N\to \infty$, 
\begin{eqnarray*}
&&\hspace{-0.2cm}\mathbb E \left[( \mathcal{\varphi}(f_\Lambda,T_N,u)-\mathbb{E}[\mathcal{\varphi}(f_\Lambda,T_N,u)])^2\right]  
= {\frac{1}{32\pi}}\mathbb E \left[ \upsilon\left(h(u,\Lambda)\right)\right] \mathbb E[F_N^2]+ \mathbb E \left[ \sum_{\substack{q=1,\\ q\neq 2}}^\infty \sum_{\substack{\overline{n}\in\mathbb N^5,\\ |{n}|=q}} \langle G_{ h(u,\Lambda)},\, \overline{H_{{n}}}\rangle^2 h_{{n}}^2\right]\\
   && = {\frac{1}{32\pi}} \mathbb E \left[ \upsilon\left(h(u,\Lambda\right)\right] \mathbb E[F_N ^2]+ \sum_{\substack{q=1,\\ q\neq 2}}^\infty \sum_{\substack{{n}\in\mathbb N^5,\\ |{n}|=q}} \mathbb E \left[ \langle G_{h(u,\Lambda)},\, \overline{H_{{n}}}\rangle^2 \right] \mathbb E \left[  h_{{n}}^2\right],
\end{eqnarray*}
with $\upsilon(u)$ defined in \eqref{g} and $$\mathbb{E}[F_N^2] = \int_{[-N,N]^2}\int_{[-N,N]^2}g_2^2(\|x-y\|)dxdy,$$ which has been computed in Lemma \ref{lem:dominant}. We write $\upsilon(u,q):= \mathbb E \left[ \langle G_{h(u,\Lambda)},\, \overline{H_{{n}}}\rangle^2 \right]$ and obtain that the variance equals
$${\frac{1}{32\pi}}\mathbb E \left[ \upsilon\left(h(u,\Lambda)\right)\right] \mathbb E[F_N^2] + \sum_{\substack{q=1,\\ q\neq 2}}^\infty \sum_{\substack{\overline{n}\in\mathbb N^5,\\ |\overline{n}|=q}} \upsilon(u,q) \mathbb E \left[  h_{{n}}^2\right].$$
We can note that for fixed $u$ the second summand is dominated by the first summand in $N$, and therefore the variance is of order of the first summand and this concludes the proof.
\end{proof}

Finally, let us prove the CLT for the Berry perturbed field given in Theorem \ref{TCLperturbed}.

\begin{proof}[Proof of Theorem \ref{TCLperturbed}]
    We show convergence of the distribution function. Let us denote by $D_\Lambda$ the image of $\Lambda$ and $F_\Lambda$ the distribution function of $\Lambda$. Let $x\in\mathbb R$, then we have by the law of total probability
    \begin{eqnarray*}
        \mathbb P && \biggl(\frac{\mathcal{\varphi}(f,T_N,h(u,{\Lambda}))- \mathbb{ E}[\mathcal{\varphi}(f,T_N,h(u,{\Lambda}))]}{\sqrt{Var(\mathcal{\varphi}(f,T_N,h(u,{\Lambda})))}}  \leq x\biggl)\\
        &&\qquad  =\int_{D_\Lambda} \mathbb P \left(\frac{\mathcal{\varphi}(f,T_N,h(u,{\Lambda}))-\mathbb{ E}[\mathcal{\varphi}(f,T_N,h(u,{\Lambda}))]}{\sqrt{Var(\mathcal{\varphi}(f,T_N,h(u,{\Lambda})))}} \leq x\:\Big| \: \Lambda =\lambda \right)  dF_\Lambda(\lambda ) \\
        &&\qquad =\int_{D_\Lambda} \mathbb P \left(\frac{\mathcal{\varphi}(f,T_N,h(u,{\lambda}))-\mathbb{ E}[\mathcal{\varphi}(f,T_N,h(u,{\lambda}))]}{\sqrt{Var(\mathcal{\varphi}(f,T_N,h(u,{\lambda})))}} \leq x\:\Big|\: \Lambda =\lambda \right) dF_\Lambda(\lambda ) \\
        &&\qquad =\int_{D_\Lambda} \mathbb P \left(\frac{\mathcal{\varphi}(f,T_N,h(u,{\lambda}))-\mathbb{ E}[\mathcal{\varphi}(f,T_N,h(u,{\lambda}))]}{\sqrt{Var(\mathcal{\varphi}(f,T_N,h(u,{\lambda})))}} \leq x \right)dF_\Lambda(\lambda ) ,
    \end{eqnarray*}
    since $f$ and $\Lambda$ are independent. Denoting by $Z$, as usual, a standard Gaussian random variable, by the dominated convergence theorem combined with the unperturbed CLT given in Theorem \ref{TCL}, this expression tends to
    $$  \int_{D_\Lambda} \mathbb P \left(Z \leq x \right) dF_\Lambda(\lambda ) = \mathbb P \left(Z \leq x \right) $$
    as $N$ tends to infinity, and the statement is proved.
\end{proof}
\section{Statistical applications}\label{NUmeric}

\subsection{Some parametric models for the perturbed Berry's random wave}\label{SpecialCases}
 
In this section, we consider some particular choices of the random shape variable   $\Lambda$ and a specific strictly increasing link function $g$ in \eqref{Perturbation} for which the variance in Theorem \ref{TCLperturbed}   can be explicitly computed. By Equation \eqref{variancePerturbed}, this means computing the term $\mathbb{ E}\left[\upsilon\left(h(u,\Lambda)\right)\right]$, with  function $\upsilon$ as in Equation \eqref{g} and $h(\cdot,\lambda)$ the inverse function   of $g(\cdot,\lambda)$ in the perturbed model  in  \eqref{Perturbation}. One can design several scenarios in which this term can be determined. In particular, if the perturbation is a scaling or a location shift, it suffices to know the value of $\mathbb E[\phi\left(h(u,\Lambda)\right)]$. In the following we provide an exemplary illustration for such a computation.  
 \begin{sloppypar} Suppose that the link function is a product, that is, $h(u,\Lambda)=\frac u {\Lambda}$. The authors in \cite{DiBEO22} computed explicitly $\mathbb{ E}\left[\rho_j\left(\frac u {\Lambda}\right)\right]$ for $j=0,1,2$,  where $\rho_j(x)=(2\pi)^{-(1+j)/2} e^{-x^2/2} H_{j-1}(x)$, for different choices of $\Lambda$. Using the formula for $j=2$, they could obtain an expression for the expectation of the EPC (see Equation \eqref{meanECperturbed}). We shall make use of these calculations to compute the asymptotic variance of the modified EPC. \end{sloppypar}  

First, we can combine the recursive formula
$$\rho_{j+1}(u)=-(2\pi)^{-1/2}\rho'_j(u)$$
(consequence of 22.8.8 in \cite{abramowitz+stegun}) and the relation
$$H_{n+1}(x)= xH_n(x)-nH_{n-1}(x)$$
(see 22.7.14 in \cite{abramowitz+stegun}) to obtain the expression
$$\mathbb E \left[\rho_j\left(\frac u {\Lambda}\right)\right] = -\frac{1}{2\pi} \left(u \frac{d}{du}\mathbb E \left[\rho_{j-2}\left(\frac u {\Lambda}\right)\right] + (j-2)\mathbb E \left[\rho_{j-2}\left(\frac u {\Lambda}\right)\right] \right).$$

Note also that, 
letting $\tilde{u} = \sqrt{2} \,u$, one can write 
\begin{eqnarray*}
    \mathbb{ E}\left[\upsilon\left(\frac u {\Lambda}\right)\right]   = \mathbb{ E}\left[ \phi \left(\frac u {\Lambda}\right)^2 \left(\frac {u^2} {\Lambda^2}-1\right)^2  \frac {u^2} {\Lambda^2}\right]  
    = \mathbb{ E}\left[ \frac{e^{-\frac{u^2}{\Lambda^2}}}{2\pi}   \left(\frac {u^2} {\Lambda^2}-1\right)^2  \frac {u^2} {\Lambda^2}\right]= \mathbb{ E}\left[ \frac{e^{-\frac{\tilde{u}^2}{2\Lambda^2}}}{2\pi}  \left(\frac {\tilde{u}^2} {2 \Lambda^2}-1\right)^2   \frac {\tilde{u}^2} {2\Lambda^2}\right].
\end{eqnarray*}  Expanding the polynomial part of the expression with Hermite polynomials yields
\begin{eqnarray}\label{Eupsilon}
    &&\mathbb{ E}\left[\upsilon\left(\frac u {\Lambda}\right)\right] = \nonumber\\
    &&\frac{1}{\sqrt{2\pi}} \bigg(\frac{1}{8}\mathbb E \left[ \phi \left(\frac {\tilde{u}} {\Lambda}\right) H_6 \left(\frac {\tilde{u}} {\Lambda}\right) \right]+ \frac{11}{8}\mathbb E \left[ \phi \left(\frac {\tilde{u}} {\Lambda}\right) H_4 \left(\frac {\tilde{u}} {\Lambda}\right) \right] + \frac{25}{8}\mathbb E \left[ \phi \left(\frac {\tilde{u}} {\Lambda}\right) H_2 \left(\frac {\tilde{u}} {\Lambda}\right) \right] \nonumber \\&&\quad + \frac{7}{8}\mathbb E \left[ \phi \left(\frac {\tilde{u}} {\Lambda}\right) H_0 \left(\frac {\tilde{u}} {\Lambda}\right) \right]\bigg) \nonumber \\
    && = \frac{1}{8}\left( (2\pi)^3 \mathbb E\left[\rho_7 \left(\frac {\tilde{u}} {\Lambda}\right)\right]+ 11 (2\pi)^2 \mathbb E\left[\rho_5 \left(\frac {\tilde{u}} {\Lambda}\right)\right] +25 (2\pi) \mathbb E\left[\rho_3 \left(\frac {\tilde{u}} {\Lambda}\right)\right]+ 7 \mathbb E\left[\rho_1 \left(\frac {\tilde{u}} {\Lambda}\right)\right]\right).
\end{eqnarray}

\textbf{Heavy-tail perturbation.}  Let $\Lambda$ such that $\Lambda^2$ is a random variable with a Pareto (Type I) distribution  with parameter $\alpha>0$ ($\Lambda^2\sim$ Pa($\alpha$)), i.e.,  
$\Pbb(\Lambda^2>x)= x^{-\alpha }, $ for $x> 1$  and $\alpha>0$.  Then
\begin{eqnarray*}
   \mathbb E \left[\rho_0\left(\frac u {\Lambda}\right)\right] &=&  u^{-2\alpha}\,2^{\alpha-1}(\pi)^{-1/2}\,\gamma(\alpha+1/2,u^2/2)+\overline\Phi(u),   \\
  \mathbb E \left[\rho_1\left(\frac u {\Lambda}\right)\right] &=& u^{-2\alpha}\,2^{\alpha-1} {(\pi)^{-1}}\,\alpha\,\gamma(\alpha,u^2/2),  \\
 \mathbb E \left[\rho_2\left(\frac u {\Lambda}\right)\right] &=&  u^{-2\alpha}\,2^{\alpha-1}{(\pi)^{-3/2}}\,\alpha\,\gamma(\alpha+1/2,u^2/2),
\end{eqnarray*}  
where $\gamma(a,\cdot)$ stands for the lower incomplete Gamma function, \emph{i.e.}, $\gamma(a,x)=\int_0^x t^{a-1}e^{-t}dt,$ for $a>0, x>0$ and   $\overline{\Phi}$ for the survival standard normal cumulative distribution function. Then we have   \begin{eqnarray*}
   \mathbb E \left[\rho_3\left(\frac u {\Lambda}\right)\right] &=& 
   -\frac{1}{(2\pi)}  2^{\alpha-1} {(\pi)^{-1}}\,\alpha (u^{-2\alpha}\gamma(\alpha,u^2/2)(1-2\alpha)+ 2^{1-\alpha}e^{-u^2/2}),  \\
  \mathbb E \left[\rho_5\left(\frac u {\Lambda}\right)\right] &=& 
\frac{1}{(2\pi)^2} 2^{\alpha-1} {(\pi)^{-1}}\,\alpha (u^{-2\alpha}\gamma(\alpha,u^2/2)(4\alpha^2-8\alpha +3)+ 2^{1-\alpha}e^{-u^2/2}(-2\alpha -u^2+4)), \\
   \mathbb E \left[\rho_7\left(\frac u {\Lambda}\right)\right] &=& 
 -\frac{1}{(2\pi)^3}2^{\alpha-1} {(\pi)^{-1}}\,\alpha (u^{-2\alpha}\gamma(\alpha,u^2/2)(-8\alpha^3+36\alpha^2-46\alpha+15)\\
 &&\qquad+ 2^{1-\alpha}e^{-u^2/2}(4\alpha^2+2\alpha u^2-18\alpha+u^4-11u^2+23)),  
\end{eqnarray*}
and then from \eqref{Eupsilon}
\begin{eqnarray}\label{variancePARETO}
 \mathbb{ E}\left[\upsilon\left(\frac u {\Lambda}\right)\right] &=& \frac{1}{8} 2^{\alpha-1} {(\pi)^{-1}}\,\alpha \big((\sqrt{2}u)^{-2\alpha}\gamma(\alpha,u^2)(8\alpha^3+8\alpha^2+8\alpha)-  \nonumber \\ && \quad 2^{1-\alpha}e^{-u^2}((\sqrt{2}u)^4+2\alpha (\sqrt{2}u)^2+4\alpha^2+4\alpha +4)\big). 
\end{eqnarray}

\textbf{Light-tail perturbation.}  
Let us consider $\Lambda$ such that $\Lambda^2 \sim \text{Exp}(\theta)$. In \cite{DiBEO22} the authors show that
$$\mathbb E \left[\rho_0\left(\frac u {\Lambda}\right)\right]= \frac{1}{2}e^{-a u},  \quad \mathbb E \left[\rho_1\left(\frac u {\Lambda}\right)\right] =  \frac{1}{2\pi} a u K_1(a u),  \quad     \mathbb E \left[\rho_2\left(\frac u {\Lambda}\right)\right]= \frac{1}{4 \pi}a u e^{-a u},$$
with $a= \sqrt{2/\theta}$.  Then, we have   \begin{eqnarray*}
 \mathbb E \left[\rho_3\left(\frac u {\Lambda}\right)\right] &=& 
   -\frac{1}{(2\pi)^2}   (-a^2u^2 K_0(au)+au K_1(au)),\\
  \mathbb E \left[\rho_5\left(\frac u {\Lambda}\right)\right] &=& 
\frac{1}{(2\pi)^3} (-6a^2u^2 K_0(au)+ K_1(au)(a^3u^3+3au)),\\
   \mathbb E \left[\rho_7\left(\frac u {\Lambda}\right)\right] &=& 
 -\frac{1}{(2\pi)^4} (K_1(au)(13a^3u^3+15au)-(45a^2u^2+a^4u^4) K_0(au)),
\end{eqnarray*}
where $K_\nu$ is modified Bessel functions of the second kind of order $\nu$. From Equation \eqref{Eupsilon},  we get 
 $$\mathbb{ E}\left[\upsilon\left(\frac u {\Lambda}\right)\right] = \frac{1}{8(2\pi)} \left(a^4\tilde{u}^4 K_0(a\tilde{u})-2 a^3\tilde{u}^3 K_1(a\tilde{u})+4a^2\tilde{u}^2 K_0(a\tilde{u})\right).$$
 
\subsection{Numerical studies} \label{numeric}
In this section we numerically illustrate the performance on finite  size  samples of some obtained  results  as the asymptotic variances and the asymptotic Gaussian fluctuations. In Figure  \ref{fig:BerryGeneration0} we generate via the   \texttt{R} package \texttt{RandomFields} (see \cite{Schlather2015}), the  Gaussian Berry random field $f$ as in Equation \eqref{covariance} (first panel)  in $T= [-N, N]^2$, with $N=70$, for different choices of the pixelization resolution.    In Figure \ref{fig:BerryGeneration},   we display the excursion sets for levels $u= 0, 0.5, 1$ and $2$ in  $[-70, 70]^2$, with   $140$ pixels for side.  

\begin{figure}[ht!]
\hspace{-0.4cm}
\includegraphics[width=4cm, height=4.1cm]{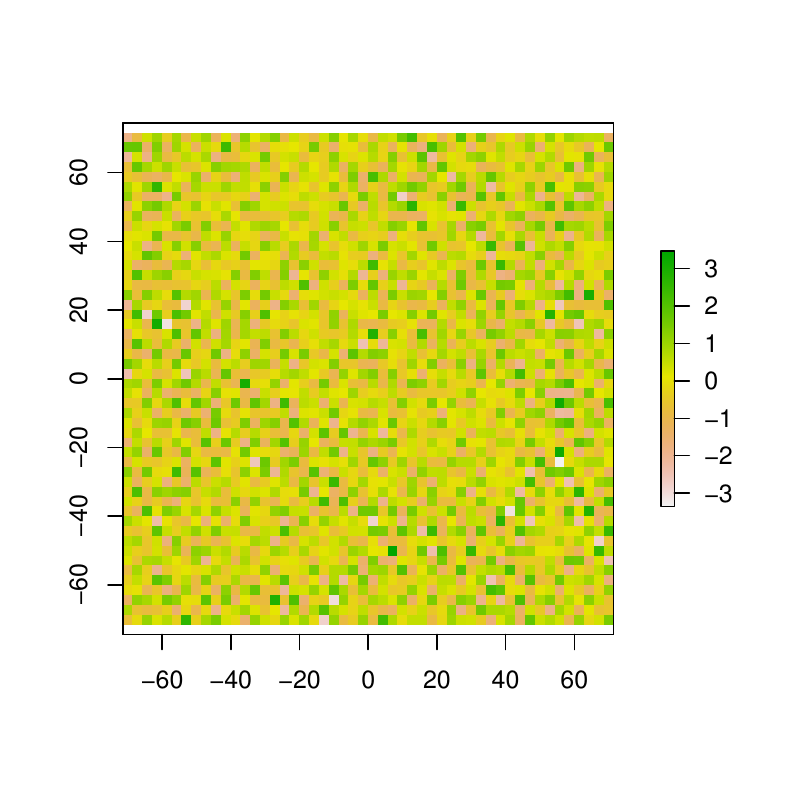}\hspace{-0.5cm}
 \includegraphics[width=4cm, height=4.1cm]{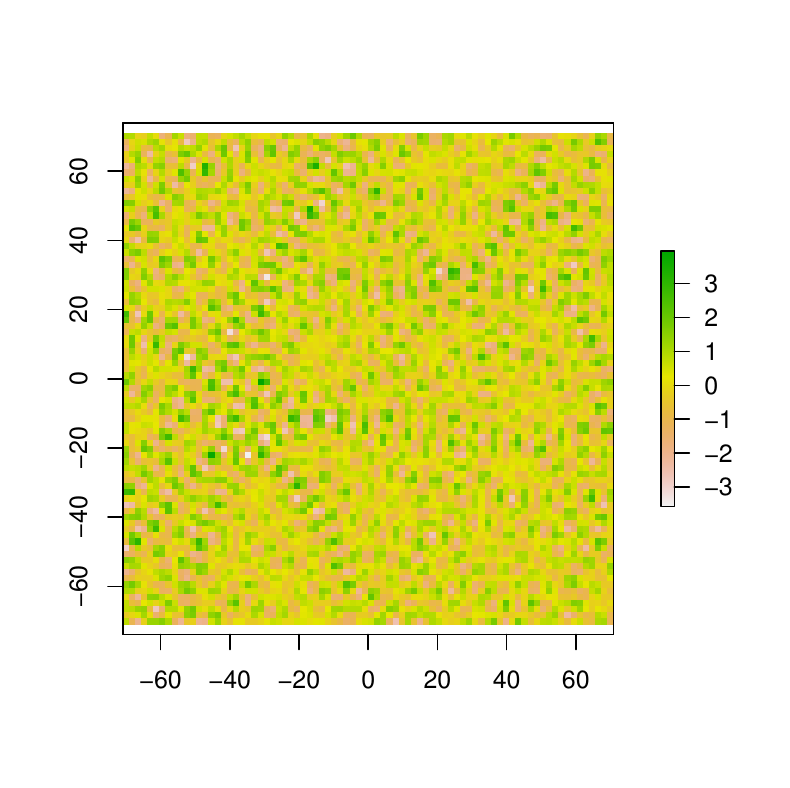}\hspace{-0.5cm}
 \includegraphics[width=4cm,height=4.1cm]{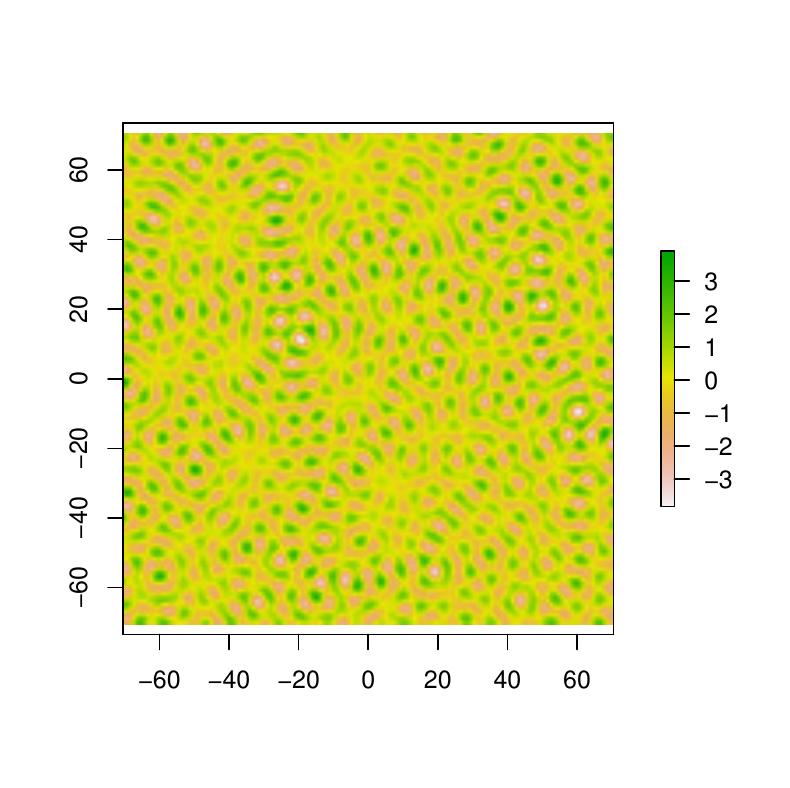}\hspace{-0.6cm}
 \includegraphics[width=4cm,height=4.1cm]{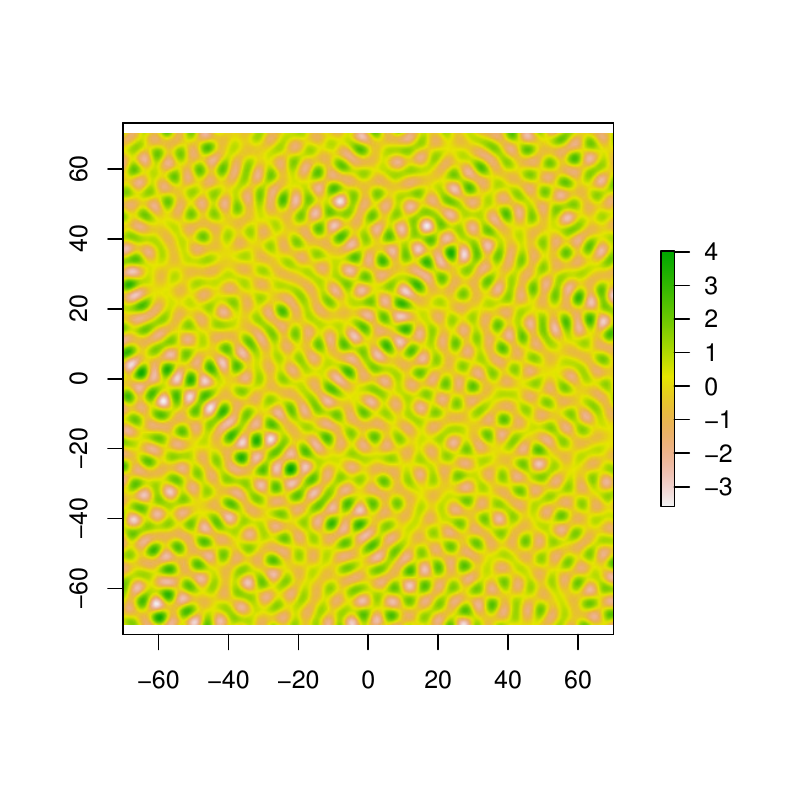} 
\caption{A random generation in $[-N, N]^2$, with $N=70$ of a Gaussian Berry random field $f$. From the left to the right: the side of the domain is divided in $50$, $80$, $140$  and $400$ pixels respectively.  Simulation are provided using the \texttt{R} package \texttt{RandomFields} (see \cite{Schlather2015}).\vspace{-0.3cm}}
\label{fig:BerryGeneration0}
\end{figure} 

\begin{figure}[ht!]
\hspace{-0.2cm}
\includegraphics[width=3.12cm,height=3.4cm]{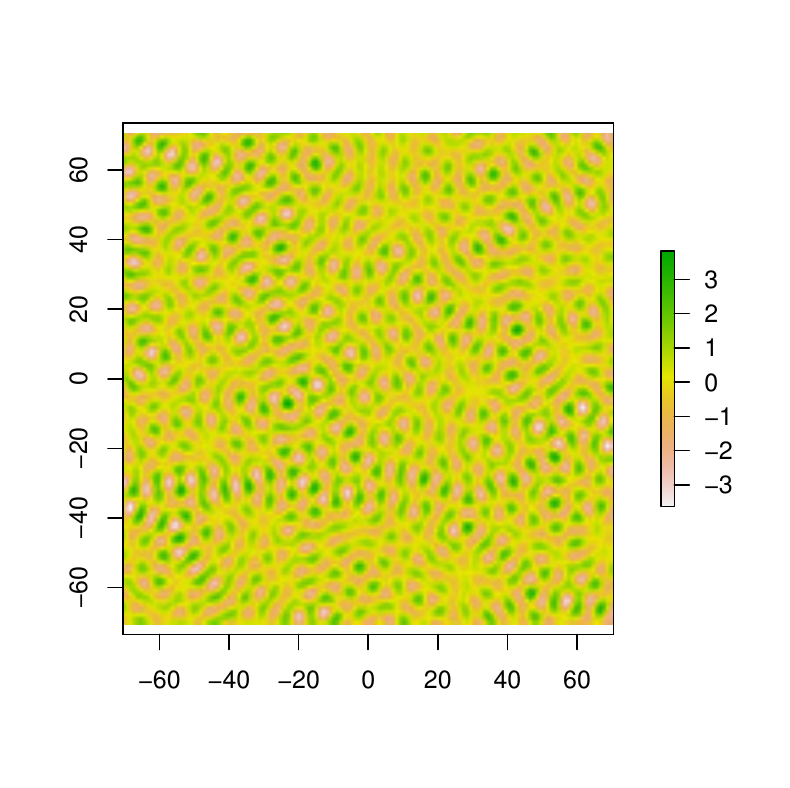}\hspace{-0.43cm}
\includegraphics[width=3.12cm,height=3.4cm]{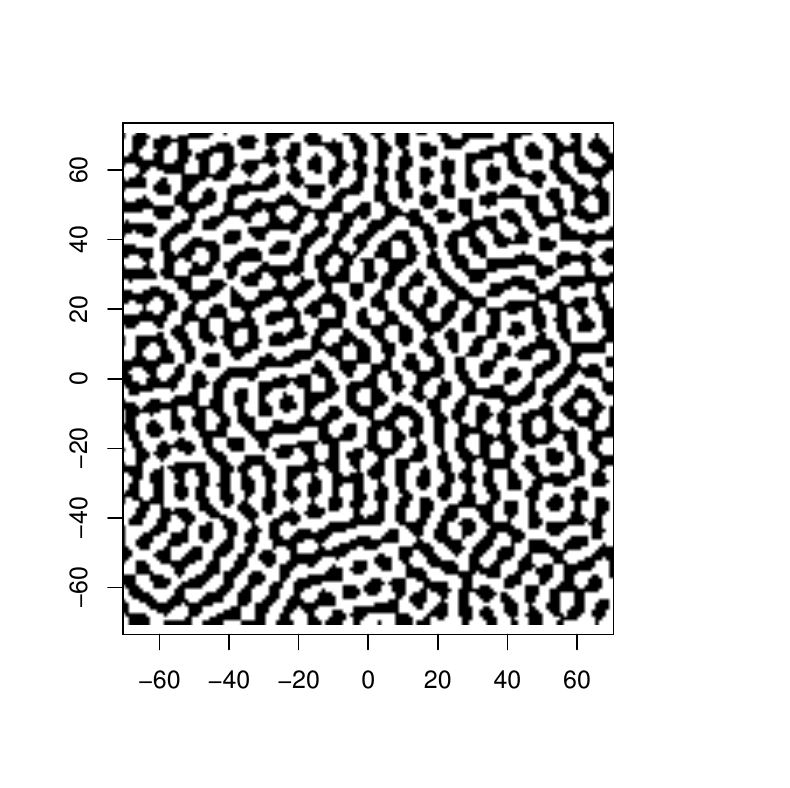}\hspace{-0.43cm}
\includegraphics[width=3.12cm,height=3.4cm]{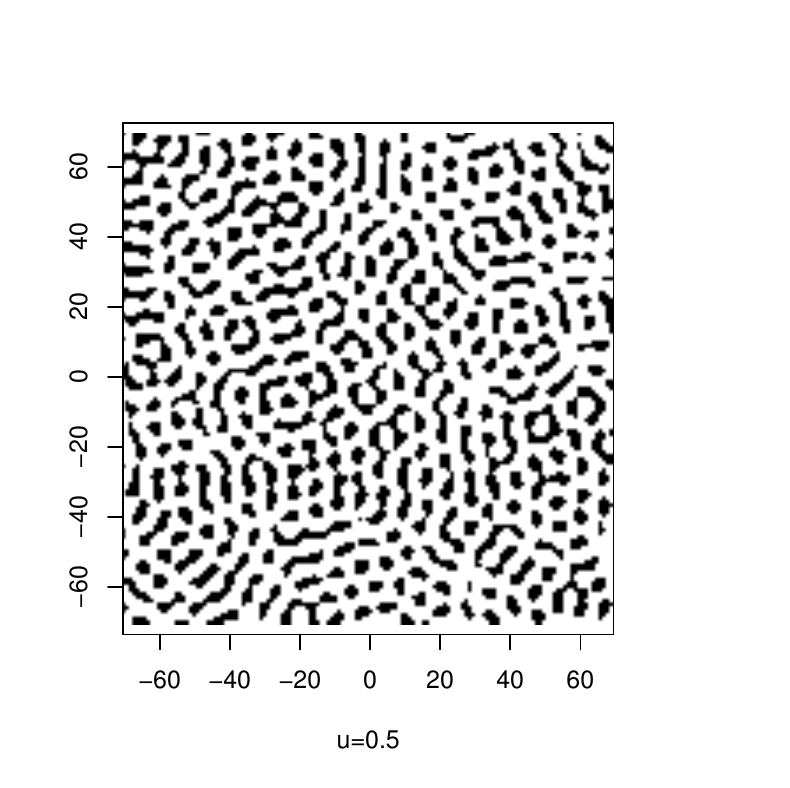}\hspace{-0.43cm}
\includegraphics[width=3.12cm,height=3.4cm]{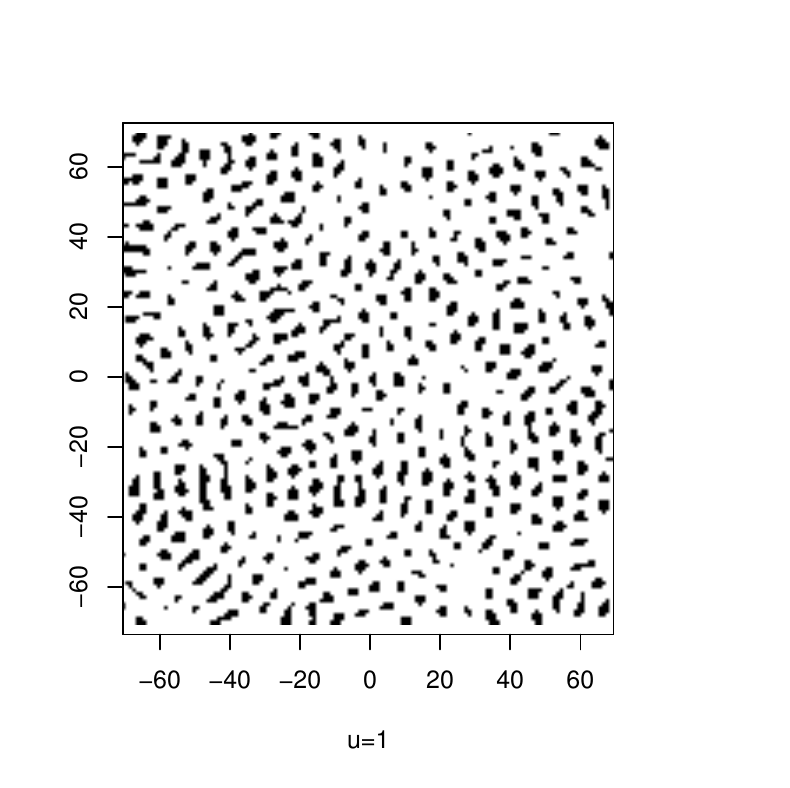}\hspace{-0.43cm}
\includegraphics[width=3.12cm,height=3.4cm]{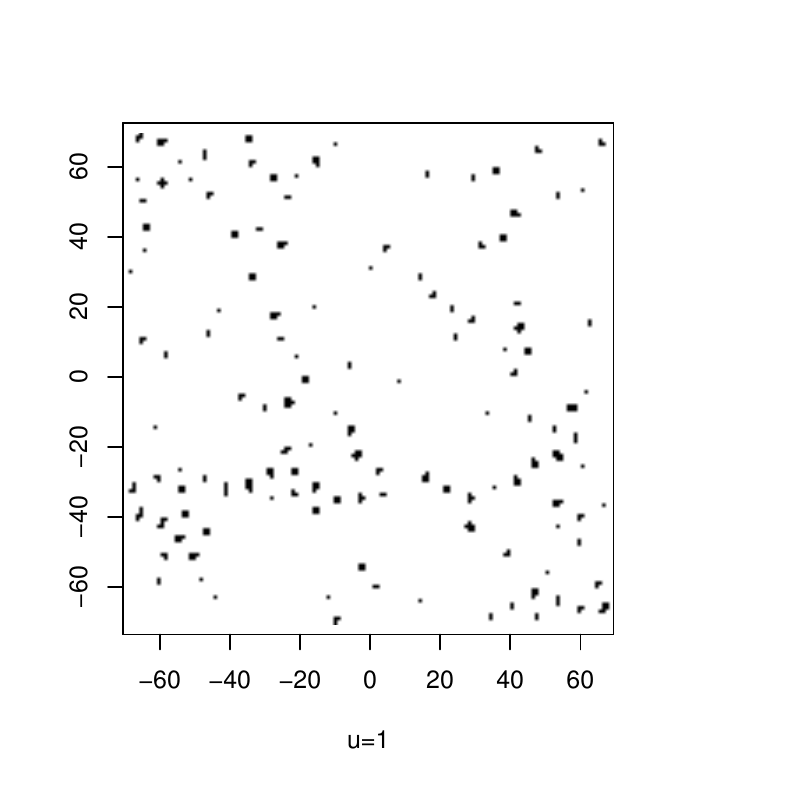}
\caption{A random generation in $[-70, 70]^2$, with $140$ pixels for side, of a Gaussian Berry random field $f$ (first panel) with associated excursion sets for  $u=0, 0.5, 1, 2$ (from the left to the right panels). Simulation are provided using the \texttt{R} package \texttt{RandomFields} (see \cite{Schlather2015}).}\label{fig:BerryGeneration}
\end{figure}

In Figure  \ref{fig:pvalues},   we illustrate the asymptotic Gaussian fluctuations stated  in Theorem \ref{TCL} for  growing domain (i.e., $N \to \infty$). To this aim, we display the $p$-values of the  Shapiro-Wilk Normality Test  for the statistics $(\varphi(f,T_N,u)-\mathbb{E}[\varphi(f,T_N,u)])/\sqrt{(2N)^3 V(u)}$, with $V(u)$  prescribed by Equation \eqref{eqVu}, as a function of levels $u$ and for several choices of $N$: $N= 70$ (left panel), $N=100$ (center panel), $N=200$ (right panel). 
The theoretical expected $\mathbb{E}[\varphi(f,T_N,u)]$ is given in \eqref{meanEC} and the estimation of the EPC $\varphi$ is evaluated on Berry's Gaussian samples in  $T$ and $2\,  N$ pixels for sides, using the \texttt{R} package \texttt{RandomFields} (see \cite{Schlather2015}). Notice the increasing performance of the $p-$values in terms of $N$ (from left to right panels in Figure \ref{fig:pvalues}).
\begin{figure}[ht!]  
\hspace{-0.1cm}\includegraphics[width=4.5cm, height=4.5cm]{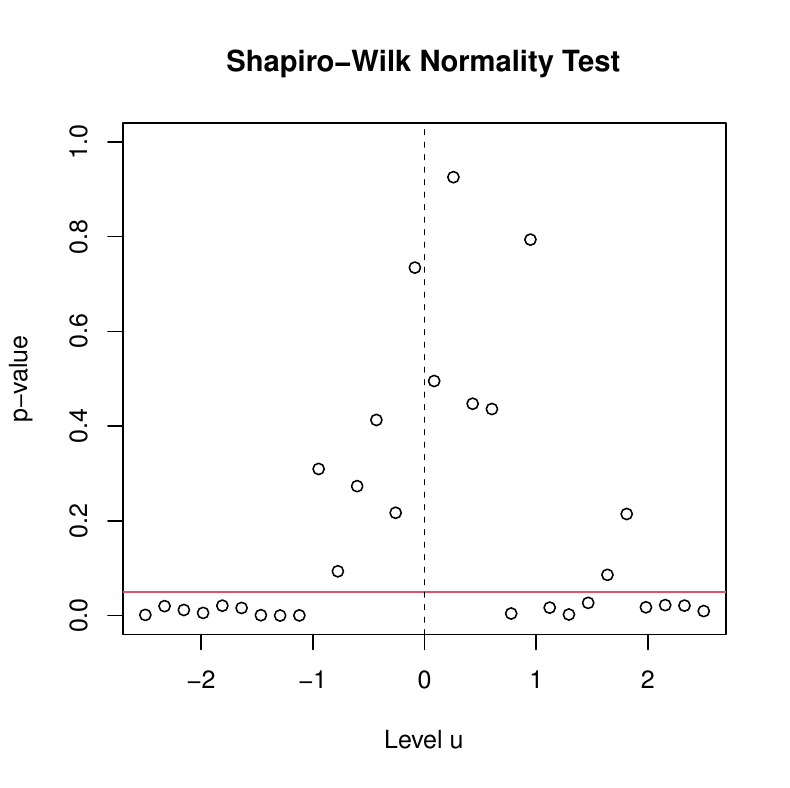} 
\hspace{-0.2cm}\includegraphics[width=4.5cm, height=4.5cm]{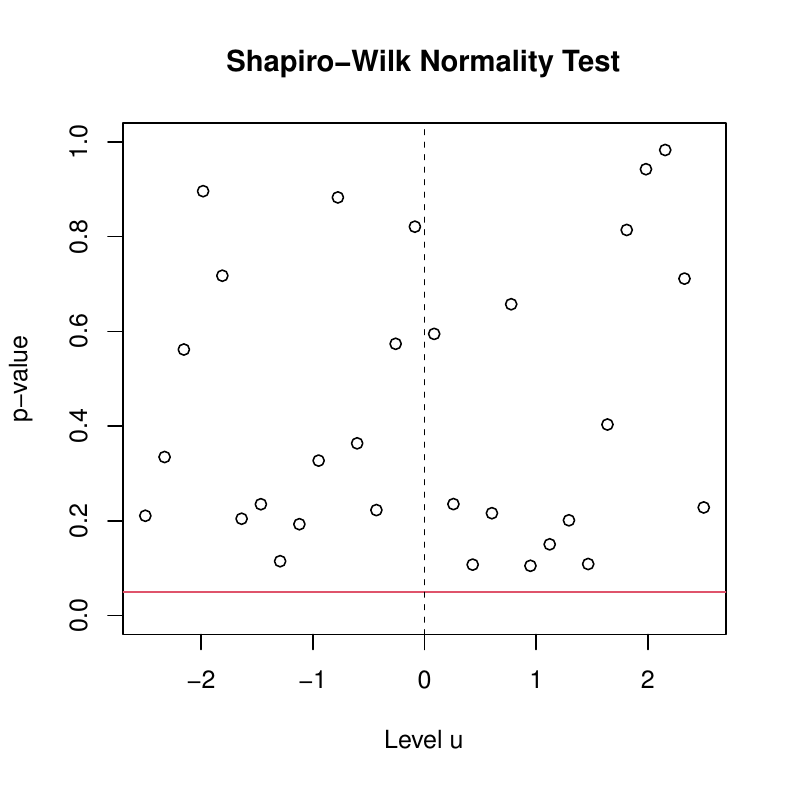}  \hspace{-0.2cm}\includegraphics[width=4.5cm, height=4.5cm]{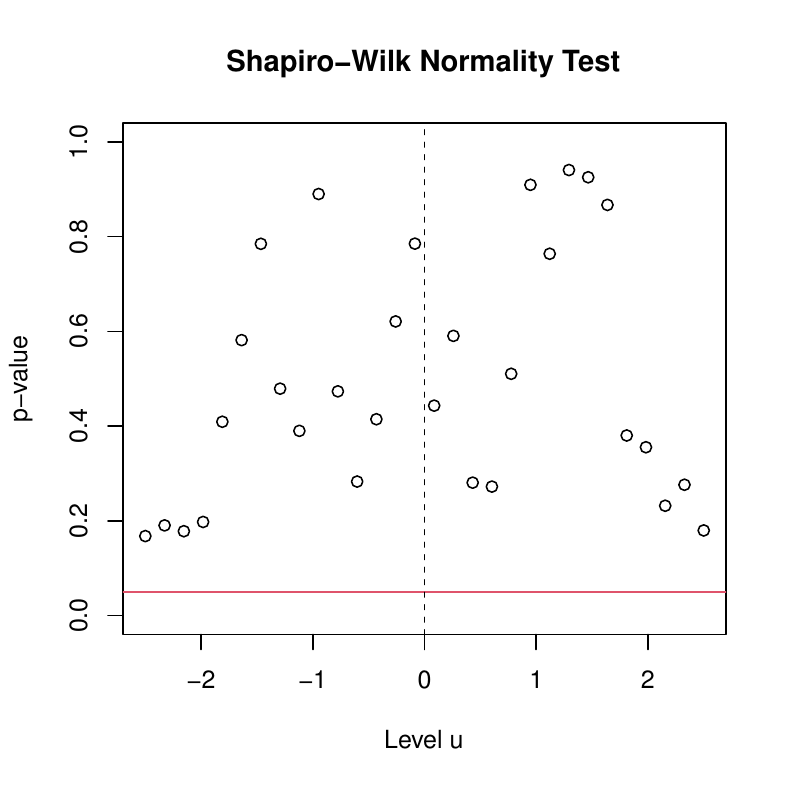} 
\caption{We display the $p$-values of the  Shapiro-Wilk Normality Test  for the statistics $(\varphi(f,T_N,u)-\mathbb{E}[\varphi(f,T_N,u)])/\sqrt{(2N)^3 V(u)}$  as a function of levels $u$ and  for several choices of the window  $T_N=[-N, N]^2$; $N= 70$ (left panel), $N=100$ (center panel), $N=200$ (right panel). 
  Estimation of the EPC $\varphi$ is provided on $250$ Berry's Gaussian samples  using the \texttt{R} package \texttt{RandomFields} (see \cite{Schlather2015}).\vspace{-0.3cm}}
\label{fig:pvalues}
\end{figure}
{Finally, we consider  the  Berry mixture model $f_\Lambda$ as in Equation \eqref{Perturbation}.    In Figure \ref{plotPerturbation} we display a realisation of the considered $f_\Lambda$ random field with $h(u, \Lambda)=\frac{u}{\Lambda}$, where  $\Lambda^2\sim$  Pa($\alpha$) with $\alpha = 4$ 
 and associated excursions set. Notice that the choice of this  Pareto  mixture clearly impacts on the standard deviation (and obviously on the range of values, see Figure \ref{plotPerturbation}) of the perturbed random field. 
 
In Figure \ref{plotPerturbationMEAN} (fist and second panels)  we display the  boxplots of the estimated  EPC values on  $500$ sample simulations for  $\alpha=2$    (first panel),  $\alpha=4$    (second panel). Unsurprisingly, the heavier-tail model for $\alpha=2$  exhibits an higher variance in the estimation (first panel in Figure \ref{plotPerturbationMEAN}). The estimated EPC is empirically evaluated on mixture Berry samples in  $[-N, N]^2$ with $N=70$ and $2N$ pixels for sides. 
Furthermore,  we add the theoretical expectation  $u \to \mathbb{E}[\varphi(A(f_\Lambda,T_N,u))]$, for $u \in [-3.5, 3.5]$ (red curve), by using Equation  \eqref{meanECperturbed} and the $\mathbb{E}[\rho_j(\frac{u}{\Lambda}))]$'s expressions obtained in Section \emph{Heavy-tail perturbation} before.} Finally the theoretical variance $u \to  V_\Lambda (u)$, for $u \in [-3.5, 3.5]$ 
 obtained by \eqref{variancePerturbed} and \eqref{variancePARETO} is displayed in the third panel of  Figure \ref{plotPerturbationMEAN} by using a red curve. Furthermore, the empirical variances obtained on 1000 Monte Carlo simulations are displayed in black dots. 
 
\begin{figure}[ht!]
\hspace{-0.2cm}
\includegraphics[width=3.12cm,height=3.4cm]{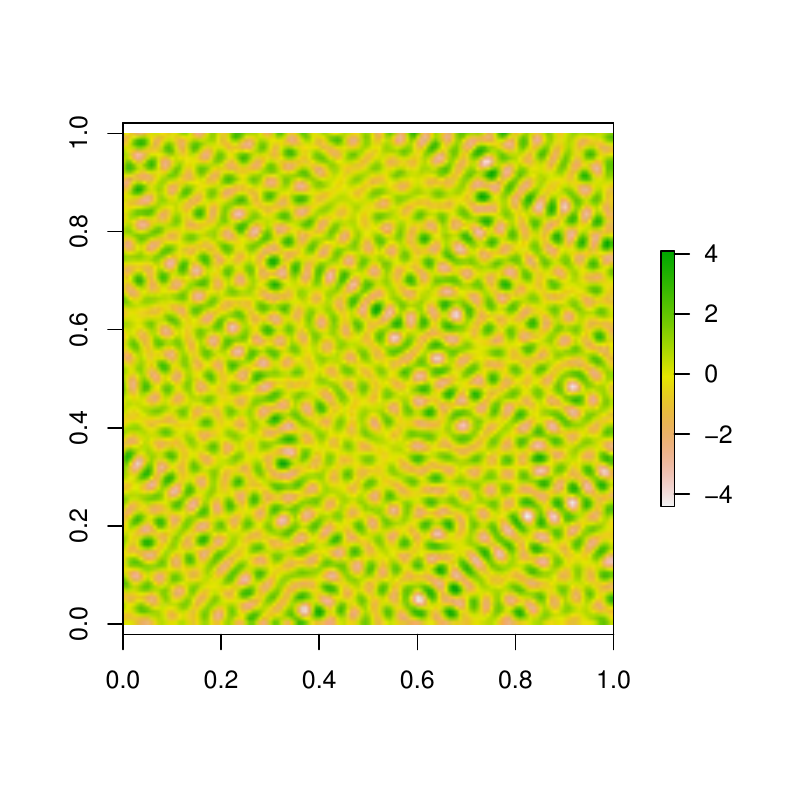}\hspace{-0.43cm}
\includegraphics[width=3.12cm,height=3.4cm]{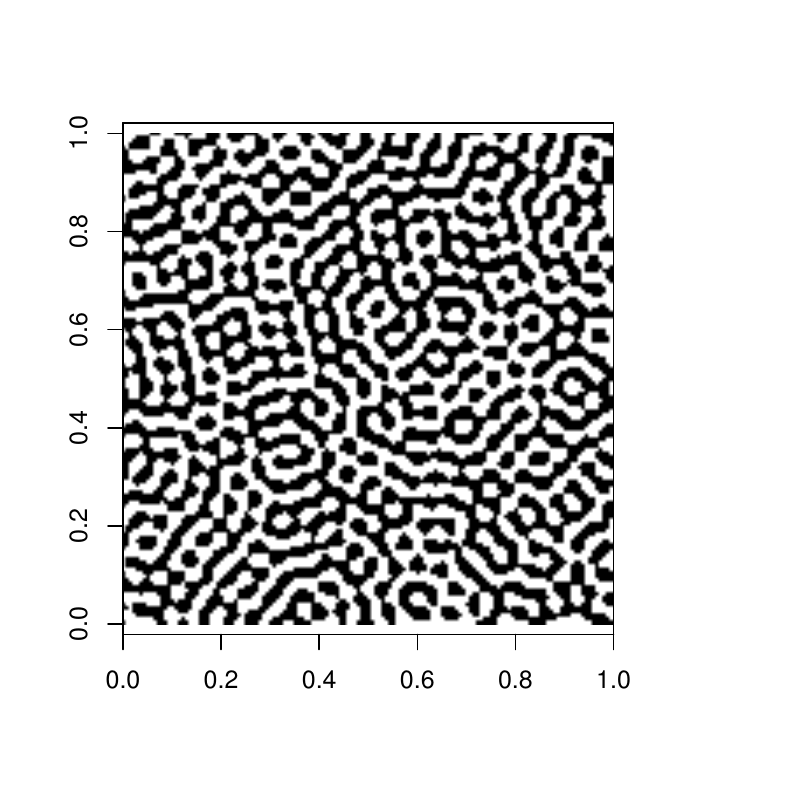}\hspace{-0.43cm}
\includegraphics[width=3.12cm,height=3.4cm]{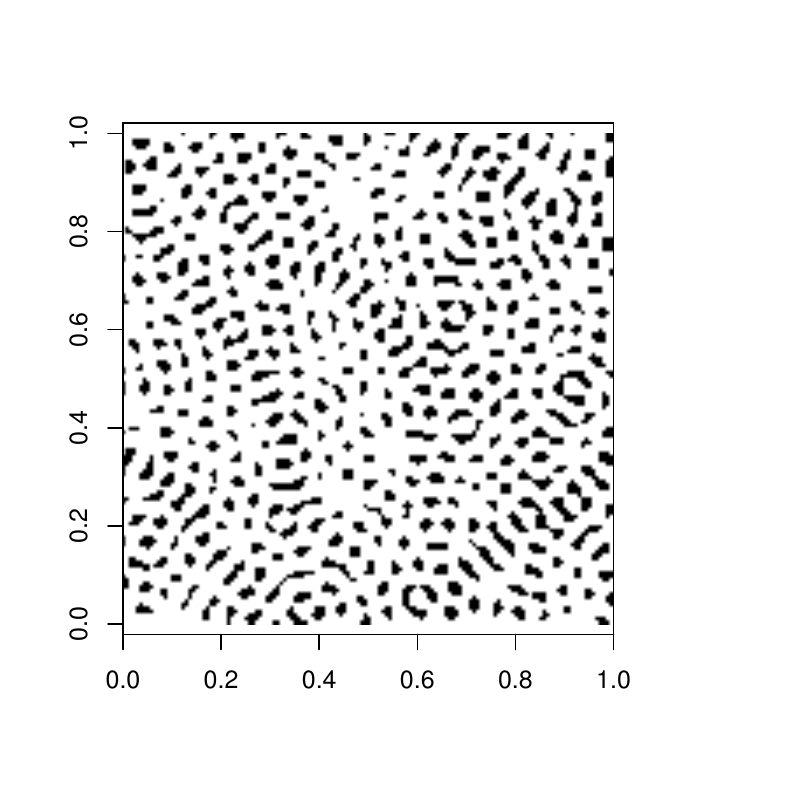}\hspace{-0.43cm}
\includegraphics[width=3.12cm,height=3.4cm]{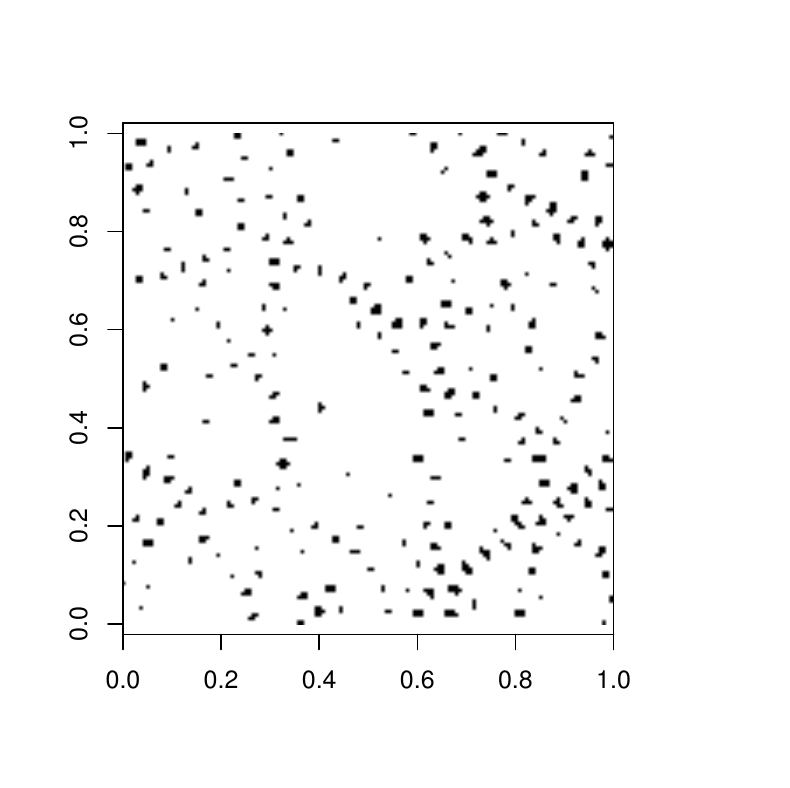}\hspace{-0.43cm}
\includegraphics[width=3.12cm,height=3.4cm]{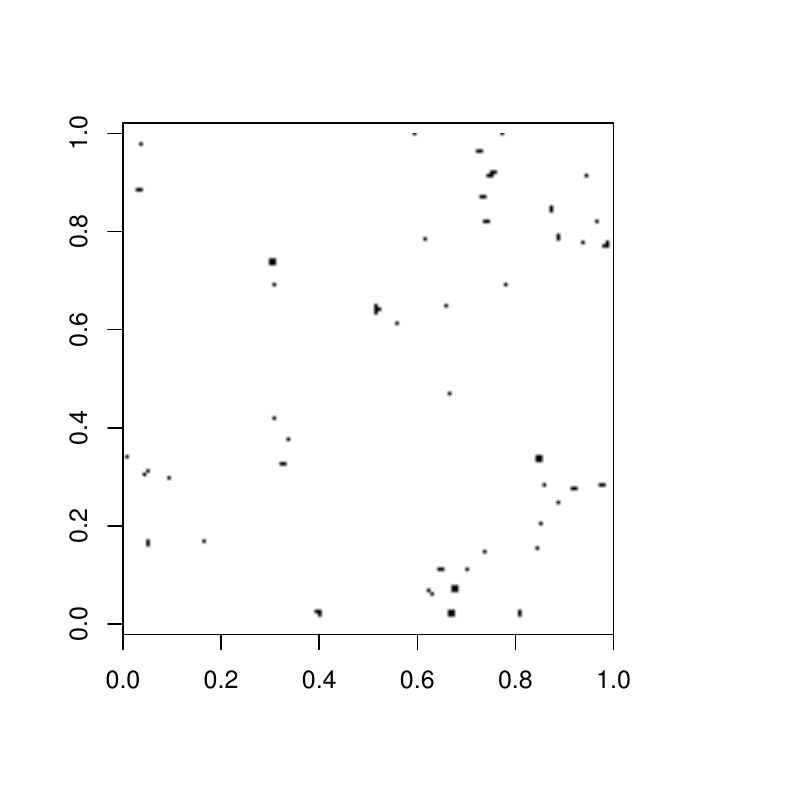}
\caption{A random generation in $[-N, N]^2$  of a mixture   Berry random field  $f_\Lambda$ as in Equation \eqref{Perturbation} with $h(u, \Lambda)=\frac{u}{\Lambda}$, where  $\Lambda^2\sim$   Pa($\alpha$) and $\alpha=4$    (first panel) with associated excursion sets for  $u=0, 1, 2, 3$ (from the second to the fifth panel). Here we take  $N=70$ and $2\,N$ pixels for side. Simulation are provided using the \texttt{R} package \texttt{RandomFields} (see \cite{Schlather2015}).}\label{plotPerturbation}\vspace{-0.2cm}
\end{figure}

\begin{figure}[ht!]
\includegraphics[width=4cm, height=4cm]{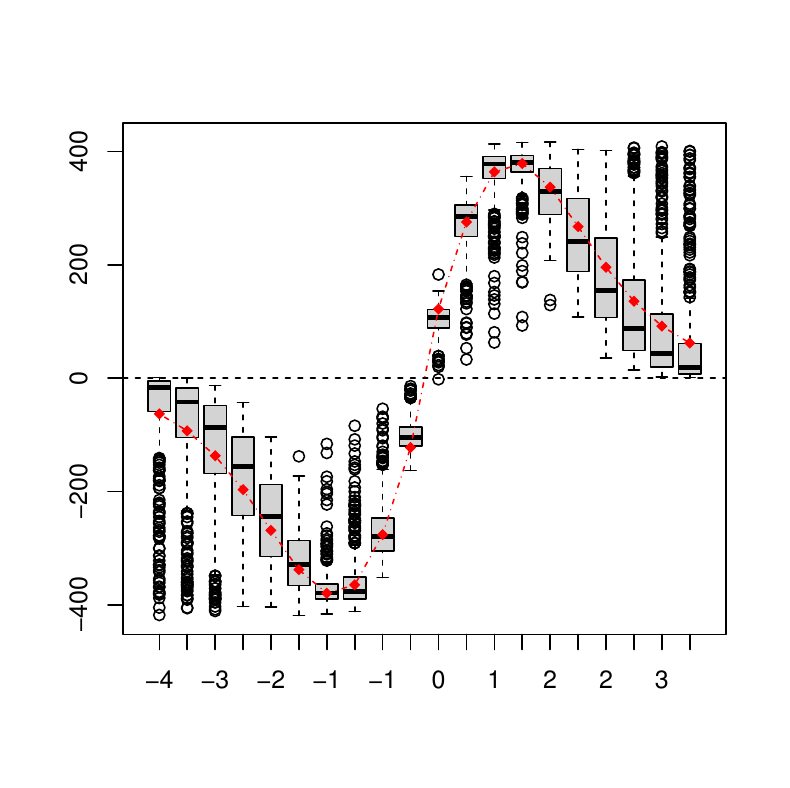}
\includegraphics[width=4cm, height=4cm]{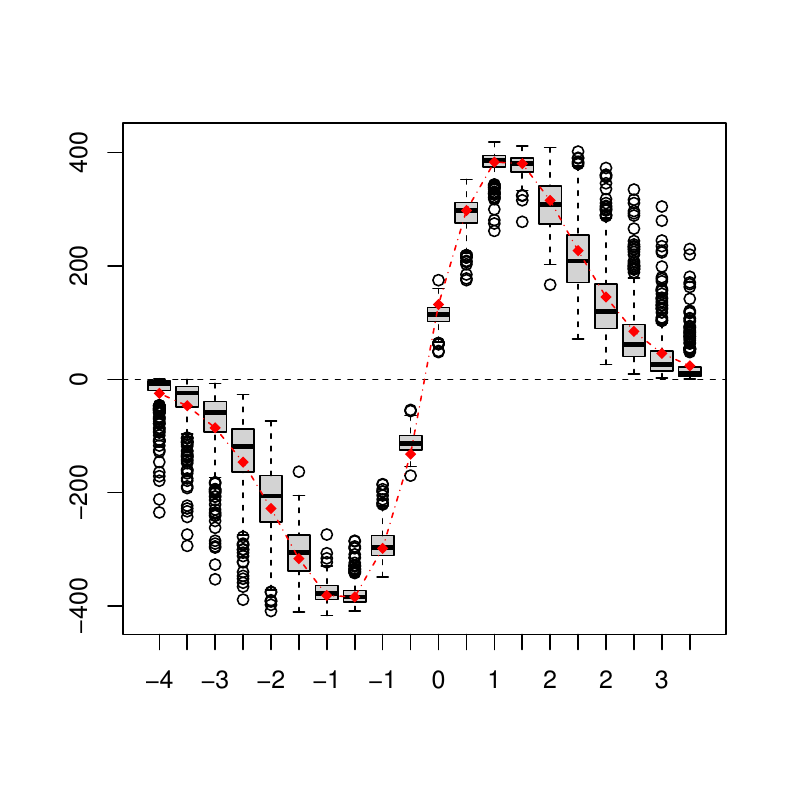}
\includegraphics[width=4cm, height=4cm]{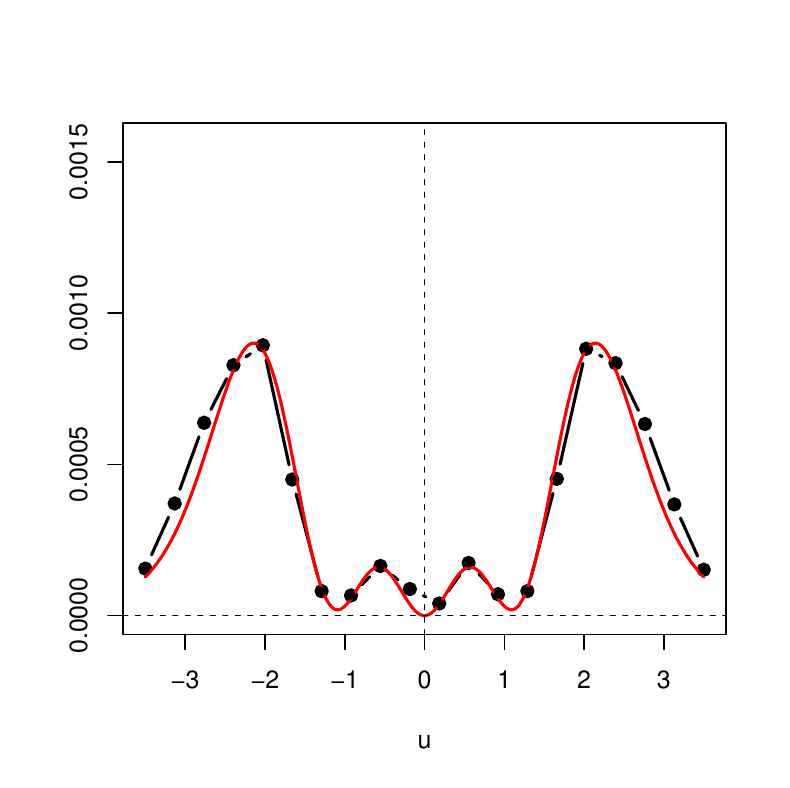}
\caption{We consider the perturbed model $f_\Lambda$ as in Equation \eqref{Perturbation},  with $h(u, \Lambda)=\frac{u}{\Lambda}$, where  $\Lambda^2\sim$ Pa($\alpha$)   $\alpha=2$    (first panel),  $\alpha=4$    (second and third panels). We display the averaged values for the EPC estimation on  $500$ sample simulations (black dots). Theoretical $u \to \mathbb{E}[\varphi(A(f_\Lambda,T_N,u))]$ (first and second panels) and $u \to  V_\Lambda (u)$ (third panel), for $u \in [-3.5, 3.5]$ 
are drawn in red line. Empirical counterpart are represent by using black boxplots and black dots.  Here we take   $[-N, N]^2$ with $N=70$ and $2\,N$ pixels for side. Simulation are provided using the \texttt{R} package \texttt{RandomFields} (see \cite{Schlather2015}).}\label{plotPerturbationMEAN}
\end{figure}

\appendix

\section{Technical lemmas} \label{sec:technical}
In this section we collect all the technical results exploited in the proof of Proposition \ref{th:variance} which give the asymptotic behaviour of the chaos components of the modified EPC expansion. In particular, we first state 
Lemma \ref{radial} which is an auxiliary result used in the proof of these technical lemmas below. Then, Lemma
 \ref{leading}, \ref{lem:5}, \ref{lem:oN3} and Remark \ref{1st-tech} imply that  the asymptotic behaviour of all the terms in the $q$-chaos for $q\ne 2$ is $o(N^3)$, as $N\to \infty$. Lemma \ref{lem:dominant} shows that the second chaos behaves as $N^3$ and hence it is the leading term of the chaos expansion. Finally in Lemma \ref{coefficients} we compute the constants of the dominant terms of the second chaotic projection.
 
To prove some of the technical lemmas we first need the following simple result.
\begin{lemma}\label{radial}
	Let $h:\mathbb \R \to \mathbb \R^+$ be a non-negative function. Then, assuming the integrals are well-defined, 
	$$\int_{[-N, N]^2}\int_{[-N, N]^2} h(\|x-y\|)dx\, dy \leq 8N^2 \pi \int_0^{2\sqrt 2 N}h(r) r dr.$$
\end{lemma}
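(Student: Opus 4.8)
The plan is to reduce the double integral to a one–dimensional radial integral by the standard translation-and-enlargement argument. First I would apply Fubini's theorem (legitimate since $h\ge 0$ and the integrals are assumed well-defined) to write
$$\int_{[-N,N]^2}\int_{[-N,N]^2} h(\|x-y\|)\,dx\,dy = \int_{[-N,N]^2}\left(\int_{[-N,N]^2} h(\|x-y\|)\,dy\right)dx,$$
and then, for each fixed $x\in[-N,N]^2$, perform the change of variables $z=y-x$ in the inner integral, which turns it into $\int_{[-N,N]^2-x} h(\|z\|)\,dz$, where $[-N,N]^2-x$ denotes the translated square.

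Next I would observe that for every $x\in[-N,N]^2$ the translated square $[-N,N]^2-x$ is contained in $[-2N,2N]^2$, and hence in the Euclidean ball $B(0,2\sqrt{2}\,N)$ centred at the origin, since the corner of $[-2N,2N]^2$ farthest from the origin lies at distance $\sqrt{(2N)^2+(2N)^2}=2\sqrt{2}\,N$. Using $h\ge 0$ to enlarge the domain of integration, the inner integral is bounded by $\int_{B(0,2\sqrt{2}\,N)} h(\|z\|)\,dz$, which by passing to polar coordinates equals $2\pi\int_0^{2\sqrt{2}\,N} h(r)\,r\,dr$. Crucially, this bound is uniform in $x$.

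Finally I would integrate this uniform bound over $x\in[-N,N]^2$, which contributes the factor $\bigl|[-N,N]^2\bigr|=(2N)^2=4N^2$, obtaining
$$\int_{[-N,N]^2}\int_{[-N,N]^2} h(\|x-y\|)\,dx\,dy \le 4N^2\cdot 2\pi\int_0^{2\sqrt{2}\,N} h(r)\,r\,dr = 8N^2\pi\int_0^{2\sqrt{2}\,N} h(r)\,r\,dr,$$
which is the claimed inequality. There is essentially no obstacle here; the only point requiring a moment's care is the geometric containment of the translated square in the ball of radius $2\sqrt{2}\,N$ (as opposed to a smaller radius such as $2N$), together with the repeated use of the non-negativity of $h$ to justify replacing the square-shaped integration domain by the larger disk.
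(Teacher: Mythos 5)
Your proof is correct and follows essentially the same route as the paper: the paper performs the change of variables to the difference variable, obtaining the weight $(2N-|x_1|)(2N-|x_2|)$ over $[-2N,2N]^2$, bounds this weight by $4N^2$, and then encloses the square in the disk of radius $2\sqrt{2}N$ before passing to polar coordinates, which is exactly your uniform-bound-plus-enlargement argument phrased via the explicit convolution formula. No discrepancies to report.
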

\begin{proof}[Proof of Lemma  \ref{radial}]
	By a change of variables, we get
	\begin{eqnarray*}
		\int_{[-N, N]^2}\int_{[-N, N]^2} h(\|x-y\|)dxdy = \int_{-2N}^{2N}\int_{-2N}^{2N} (2N-|x_1|)(2N-|x_2|)h\left(\left\| \begin{pmatrix}
			x_1 \\ 
			x_2
		\end{pmatrix}\right\|\right)dx_1 dx_2,
	\end{eqnarray*}
	which can then be bounded by
	\begin{eqnarray*}
		4N^2 \int_{-2N}^{2N}\int_{-2N}^{2N} h\left(\left\| \begin{pmatrix}
			x_1 \\ 
			x_2
		\end{pmatrix}\right\|\right)dx_1 dx_2 
		 \leq 4 N^2\int_{B_{2\sqrt{2}N}(0)} \hspace{-0.25cm} h\left(\left\| \begin{pmatrix}
			x_1 \\ 
			x_2
		\end{pmatrix}\right\|\right)dx_1 dx_2 = 8N^2 \pi \int_0^{2\sqrt 2 N}\hspace{-0.25cm}h(r) r dr,
	\end{eqnarray*}
	with a switch to radial coordinates.
\end{proof}

The following lemma provides some auxiliary statements for the bounds of the chaos components.
\begin{lemma}\label{leading}
	We have for $\alpha \in \mathbb N^7, |\alpha|=q$, that, as $N\to\infty$,
	$$\int_0^{2\sqrt{2}N} r \prod_{i=1}^7 \left| g_i(r)\right|^{\alpha _i} dr = o(N)$$
	for $q\geq 3$ and for $q\geq 1$ if at least one of $\alpha_1$, $\alpha_3$, $\alpha_5$, $\alpha_6$ is nonzero. Moreover,
	$$\left|\int_0^{2N\sqrt{2}} r g_i(r) dr\right| = o(N)$$
	for $q=1$ in either case.
\end{lemma}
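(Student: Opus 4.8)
The plan is to reduce the whole statement to elementary estimates on Bessel functions, using only their behaviour near $0$ and near $\infty$. First I would record pointwise bounds for the $g_i$. Each $g_i$, expressed through $J_0,\dots,J_4$ as in Lemma \ref{covs}, extends continuously to $r=0$: using $J_\nu(r)\sim (r/2)^\nu/\nu!$ one checks that the apparent singularities cancel (e.g.\ $g_3(0)=0$, $g_5(0)=g_7(0)=3/8$, $g_6(0)=1/8$), so each $g_i$ is bounded on $[0,1]$. For $r\ge1$ the large-argument asymptotics $J_\nu(r)=O(r^{-1/2})$ yield $|g_i(r)|\le C\,r^{-\beta_i}$ with $\beta_2=\beta_4=\beta_7=1/2$, $\beta_1=\beta_3=\beta_6=3/2$ and $\beta_5=5/2$; the decay rate of $g_6$ is clearest after noting $g_6=g_1-g_5$ (a consequence of $g_5+2g_6+g_7=J_0$, itself coming from the fact that $f$ is a Laplace eigenfunction). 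Since $\beta_i\ge\tfrac12$ for all $i$, this gives, for any $\alpha\in\mathbb N^7$ with $|\alpha|=q$,
$$\prod_{i=1}^7|g_i(r)|^{\alpha_i}\le C\,r^{-s},\qquad r\ge1,\qquad s:=\sum_{i=1}^7\alpha_i\beta_i\ \ge\ \tfrac{q}{2}.$$

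For the first assertion I would split $\int_0^{2\sqrt2 N}=\int_0^1+\int_1^{2\sqrt2 N}$; the first piece is $O(1)$ because the integrand is continuous on $[0,1]$, and the second is bounded by $C\int_1^{2\sqrt2 N}r^{1-s}\,dr$, which is $O(1)$, $O(\log N)$, or $O(N^{2-s})$ according as $s>2$, $s=2$, or $s<2$. In all three cases this is $o(N)$ as soon as $s>1$. But $s\ge q/2\ge 3/2>1$ when $q\ge3$, and $s\ge\beta_j\ge3/2>1$ whenever some $\alpha_j\ne0$ with $j\in\{1,3,5,6\}$, which covers exactly the two situations in the statement.

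For the ``Moreover'' part ($q=1$), if the single nonzero index $i$ lies in $\{1,3,5,6\}$ then $\bigl|\int_0^{2\sqrt2 N} r g_i(r)\,dr\bigr|\le\int_0^{2\sqrt2 N} r|g_i(r)|\,dr=o(N)$ by the case just treated. If instead $i\in\{2,4,7\}$, the absolute-value bound only gives $O(N^{3/2})$, so one must exploit the oscillation of the Bessel functions. Writing $g_2,g_4$ as linear combinations of $J_\nu$'s and $g_7(r)=-J_2(r)+3J_2(r)/r^2$ (Lemma \ref{covs}), it suffices to show $\int_0^{X}rJ_\nu(r)\,dr=O(\sqrt X)$ for each $\nu\ge0$ and $\int_0^X J_2(r)/r\,dr=O(1)$, with $X=2\sqrt2 N$. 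The latter is immediate since $J_2(r)/r$ is continuous at $0$ and $O(r^{-3/2})$ at $\infty$. For the former, the identities $(rJ_1(r))'=rJ_0(r)$ and $J_0'=-J_1$ give $\int_0^X rJ_0=XJ_1(X)=O(\sqrt X)$ and $\int_0^X rJ_1=-XJ_0(X)+\int_0^X J_0(r)\,dr=O(\sqrt X)$, the boundedness of $\int_0^X J_0$ following from the asymptotics and one integration by parts; the recurrence $rJ_{\nu+1}(r)=2\nu J_\nu(r)-rJ_{\nu-1}(r)$, together with the boundedness of $\int_0^X J_\mu(r)\,dr$ for $\mu\ge1$ (from the same recurrence by induction, or from $\int_0^\infty J_\mu=1$), then reduces the general $\nu$ to the cases $\nu=0,1$. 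Since $\sqrt X=O(\sqrt N)=o(N)$, this finishes the proof.

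I expect the single-Bessel case with $i\in\{2,4,7\}$ to be the only delicate point: there the slow $r^{-1/2}$ decay makes the naive modulus bound useless, and one genuinely needs the cancellation encoded in the Bessel antiderivative formulas and three-term recurrences. Everything else is routine bookkeeping with power-law bounds and the splitting at $r=1$.
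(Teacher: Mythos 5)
Your proof is correct and follows essentially the same route as the paper: power-law bounds $|g_i(r)|\lesssim r^{-\beta_i}$ from the large-argument Bessel asymptotics handle $q\ge 3$ and any term involving $g_1,g_3,g_5,g_6$, while for $q=1$ with $i\in\{2,4,7\}$ one must exploit oscillation, which the paper does by writing the three integrals in closed form (e.g.\ $\int_0^X r g_2 = J_0(X)+XJ_1(X)-1$) and you do equivalently via the antiderivative identities and three-term recurrences, both yielding the same $O(\sqrt N)$ bound. Your decay exponents and the identification of the genuinely delicate case agree with the paper's treatment.
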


\begin{proof}[Proof of Lemma \ref{leading}]
	First recall the following asymptotic formula for $J_m$, given, for instance, in 9.2.1 in \cite{abramowitz+stegun}:
	\begin{equation}\label{Bessel}
	J_m(r)=\sqrt{\frac{2}{\pi r}}\left(\cos \left(r-\frac{m\pi}{2}-\frac{\pi}{4}\right)+O\left(\frac{1}{r}\right)\right),
	\end{equation}
	as well as the fact that Bessel functions are uniformly bounded by $1$.\\
	For $q=1$, the statement in the lemma is obvious for $g_1$, $g_3$, $g_5$ and $g_6$. More precisely, we have for some constant $K$ and $i=1,\,3,\,5$, or $6$
	$$\left|\int_0^{2\sqrt{2}N} r g_i(r) dr\right|\lesssim K + \int_1^{2\sqrt{2}N} \frac{1}{\sqrt r} dr \lesssim \sqrt{N} = o(N)$$
	thanks to the asymptotics and the bound mentioned above.\\
	The proof for $q\geq 3$ follows along the same lines.\\
	
	For $q=1$ and $g_2$, $g_4$, and $g_7$ we can compute the integral explicitly:
	\begin{eqnarray*}
		&&\int_0^{2\sqrt{2}N} r g_2(r) dr = J_0(2\sqrt{2}N)+2\sqrt{2}N J_1(2\sqrt{2}N)-1,\\
		&&\int_0^{2\sqrt{2}N} r g_4(r) dr = -2\sqrt{2}NJ_2(2\sqrt{2}N),\\
		&&\int_0^{2\sqrt{2}N} r g_7(r) dr = 2\sqrt{2}NJ_1(2\sqrt{2}N)-\frac{J_1(2\sqrt{2}N)}{2\sqrt{2}N}+J_0(2\sqrt{2}N)-J_2(2\sqrt{2}N)-\frac{1}{2}.
	\end{eqnarray*}
	Again, due to the asymptotics of the Bessel functions, these integrals are sublinear in $N$.
\end{proof}

\begin{remark}\label{1st-tech}
	We also have
	\begin{eqnarray*}
		&&\left|\int_0^{2\sqrt{2}N} r^2 g_2(r) dr\right| \lesssim N\sqrt{N}, \quad 
 \left|\int_0^{2\sqrt{2}N} r^2 g_4(r) dr\right|  \lesssim N\sqrt{N}, \quad \left|\int_0^{2\sqrt{2}N} r^2 g_7(r) dr\right|  \lesssim N\sqrt{N}
	\end{eqnarray*}
	as well as
	\begin{eqnarray*}
&&\left|\int_0^{2\sqrt{2}N} r^3 g_2(r) dr\right| \lesssim N^2\sqrt{N}, \quad \left|\int_0^{2\sqrt{2}N} r^3 g_4(r) dr\right|  \lesssim N^2\sqrt{N}, \quad \left|\int_0^{2\sqrt{2}N} r^3 g_7(r) dr\right|  \lesssim N^2\sqrt{N}. 
	\end{eqnarray*} 
	by direct calculation, using the identity 6.561 in \cite{gradshteyn2007} together with the asymptotics of the Bessel and Lommel functions (Equation \eqref{Bessel} and an approximation result from \cite{Dingle} respectively).
\end{remark}
 
{The following three lemmata (see Lemmas \ref{lem:5}, \ref{lem:oN3} and \ref{lem:dominant} below) provide bounds and/or precise calculations of all chaos components of the modified Euler-Poincaré characteristic.}
\begin{lemma}\label{lem:5}
	The variance of chaos components of order $3$ or higher, as well as all summands in the chaos decomposition containing at least one of the factors $\partial_1 f$, $\partial_{11} f$, and $\partial_{12} f$ is of order $o(N^3)$.
\end{lemma}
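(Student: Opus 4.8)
The plan is to run the same chaos/diagram-formula machinery used for the leading-order analysis, but only to \emph{bound} things. Starting from the $L^2$-expansion of $\varphi(f,T_N,u)$ in Lemma \ref{lem:EL}, the Wiener chaoses of distinct orders are orthogonal, so $\Var(\varphi(f,T_N,u)-\mathbb E[\varphi(f,T_N,u)])=\sum_{q\ge 1}\Var(\varphi(f,T_N,u)[q])$, and each $\Var(\varphi(f,T_N,u)[q])=\sum_{|{n}|=|{n}'|=q}\langle G_u,\overline H_{{n}}\rangle\langle G_u,\overline H_{{n}'}\rangle\,\mathbb E[h_{{n}}h_{{n}'}]$ with $h_{{n}}=\int_{[-N,N]^2}\overline H_{{n}}(Y_1,Y_2,Z,Y_4,Y_5)(x)\,dx$. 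By the diagram/product formula (see, e.g., \cite[Section 4.3.1]{MP11}) each $\mathbb E[h_{{n}}h_{{n}'}]$ is a finite sum of terms $\int_{[-N,N]^2}\int_{[-N,N]^2}\prod_{k=1}^{q}\Gamma_k(\|x-y\|)\,dx\,dy$, where every $\Gamma_k$ is one of the point-pair covariances $\mathbb E[Y_i(x)Y_j(y)]$; by Lemma \ref{covs}, the normalisations fixed above and the substitution $Z=\tfrac{3}{2\sqrt2}Y_3-\tfrac{1}{2\sqrt2}Y_5$, each $\Gamma_k$ is a fixed linear combination of $g_1,\dots,g_7$ evaluated at $\|x-y\|$. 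Expanding, $\Var(\varphi(f,T_N,u)[q])$ becomes a finite linear combination of integrals $\int_{[-N,N]^2}\int_{[-N,N]^2}\prod_{i=1}^{7}g_i(\|x-y\|)^{\alpha_i}\,dx\,dy$ with $\alpha_1+\cdots+\alpha_7=q$, and Lemma \ref{radial} bounds each of these by $8\pi N^2\int_0^{2\sqrt2 N}r\prod_{i=1}^{7}|g_i(r)|^{\alpha_i}\,dr$.

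The two cases of the statement then follow from the two clauses of Lemma \ref{leading}. If $q\ge 3$, the radial integral above is $o(N)$ for every admissible exponent vector, so each term is $o(N^3)$ and, $q$ being fixed, $\Var(\varphi(f,T_N,u)[q])=o(N^3)$. For a summand carrying at least one of the factors $\partial_1 f$, $\partial_{11}f$, $\partial_{12}f$ — where now $q$ may equal $1$ or $2$ — the vanishing pattern recorded in Lemma \ref{covs} (and its proof) shows that, at $x=(0,0)$, $y=(0,r)$, $\partial_1 f$ is correlated only with $\partial_1 f$ and $\partial_{12}f$, $\partial_{12}f$ only with $\partial_{12}f$ and $\partial_1 f$, and $\partial_{11}f$ only with $\partial_{11}f$, $\partial_{22}f$ and $\partial_2 f$; hence in every term produced by the diagram formula at least one of $\alpha_1,\alpha_3,\alpha_5,\alpha_6$ is nonzero, and the second clause of Lemma \ref{leading} (valid already for $q=1$) forces the corresponding radial integral to be $o(N)$, so $o(N^3)$, even for $q\in\{1,2\}$.

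The delicate step, and what I expect to be the main obstacle, is upgrading these per-component estimates into the uniform tail bound $\sum_{q\ge 3}\Var(\varphi(f,T_N,u)[q])=o(N^3)$. The route I would take is to peel off three of the factors in $\prod_{i}|g_i(r)|^{\alpha_i}$ (possible since $q\ge 3$): each $g_i$ is $O(r^{-1/2})$ with oscillation, so the product of three of them is $O(r^{-3/2})$, giving $\int_0^{2\sqrt2 N}r\,(\text{three factors})\,dr\le C\sqrt N$; the remaining $q-3$ factors are each bounded by $\tfrac12$ (all $g_i$ being covariances of variables of variance $\le\tfrac12$), yielding a geometric gain $2^{-(q-3)}$. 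Combined with the rapid decay of the Hermite coefficients of the Dirac weights $\delta_\varepsilon(\cdot)$ in \eqref{L2integral} (of order $((k/2)!\,2^{k/2})^{-1}$ for even $k$) and of the indicator $1_{[u,\infty)}$, one then has to check that the resulting weighted sum over $q\ge 3$ converges, so that $\sum_{q\ge 3}\Var(\varphi(f,T_N,u)[q])\le C' N^2\sqrt N=o(N^3)$; controlling this sum against the combinatorial growth of the number of diagram terms is the subtle point, analogous to (and slightly generalising) the estimate behind \cite[Proposition 1.2]{EstradeLeon16-AoP}, and may require exploiting the oscillatory cancellation in $\int_0^{2\sqrt2 N} r\,g_i(r)^q\,dr$ rather than the crude size bound. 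Once the tail is controlled, combining with Lemma \ref{lem:dominant}, which shows the surviving second-chaos terms are of exact order $N^3$, yields the claim.
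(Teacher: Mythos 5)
Your argument for each fixed chaos component is exactly the paper's: the printed proof of this lemma is the single sentence that it is a direct consequence of Lemma \ref{radial} and Lemma \ref{leading}, i.e., the diagram-formula reduction to integrals of $\prod_i g_i(\|x-y\|)^{\alpha_i}$ over $[-N,N]^2\times[-N,N]^2$, followed by the radial bound and the $o(N)$ estimates, precisely as in your first two paragraphs. The uniform-in-$q$ summability of $\sum_{q\ge 3}\Var(\varphi(f,T_N,u)[q])$ that you single out as the delicate step is a legitimate concern, but the paper does not address it within this lemma either (it is implicitly absorbed into the $L^2$ convergence of the Hermite expansion in the spirit of \cite{EstradeLeon16-AoP}), so your sketched geometric gain in $q$ goes beyond, rather than falls short of, the paper's one-line proof.
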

\begin{proof}[Proof of Lemma \ref{lem:5}]
	This is a direct consequence of Lemma \ref{radial} and Lemma \ref{leading}. 
\end{proof}
\begin{lemma}\label{lem:oN3}
	The variance of the first chaos component involving the factors $\partial_2 f$ and/or $\partial_{22} f$ is of order $o(N^3)$, that is,
	$$\left| \int_{[-N, N]^2}\int_{[-N, N]^2} g_i(\|x-y\|)dxdy\right| = o(N^3) $$
	for $i\in\{2,\,4,\,7\}$.
	
\end{lemma}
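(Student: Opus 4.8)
The plan is to turn the double integral into a single radial integral carrying a polynomial weight, and to extract the decay from the oscillation of $g_i$ by one integration by parts. Let me stress first that a crude modulus estimate is insufficient: since $|g_i(r)|\lesssim r^{-1/2}$ by \eqref{Bessel}, Lemma \ref{radial} only produces a bound of order $N^{2}\cdot N^{3/2}=N^{7/2}$ for $i\in\{2,4,7\}$, whereas we want $o(N^3)$. In particular cancellation has to be preserved also on the ``corner'' region $\{2N<\|z\|\le 2\sqrt 2 N\}$, where a modulus bound again loses a factor $N^{1/2}$.

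First I would substitute $z=x-y$ and use that $|T_N\cap(T_N+z)|=(2N-|z_1|)_+(2N-|z_2|)_+$, and then pass to polar coordinates $z=\rho(\cos\theta,\sin\theta)$, obtaining, for $i\in\{2,4,7\}$,
\begin{equation*}
\int_{[-N,N]^2}\int_{[-N,N]^2} g_i(\|x-y\|)\,dx\,dy=\int_0^{2\pi}\!\int_0^{R(\theta)} w(\rho,\theta)\,g_i(\rho)\,\rho\,d\rho\,d\theta ,
\end{equation*}
where $R(\theta)=2N/\max(|\cos\theta|,|\sin\theta|)\in[2N,2\sqrt 2 N]$ and $w(\rho,\theta)=(2N-\rho|\cos\theta|)(2N-\rho|\sin\theta|)\ge 0$ on the relevant range. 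Setting $\Psi_i(t):=\int_0^t \rho\,g_i(\rho)\,d\rho$, so that $\Psi_i(0)=0$ and $d\Psi_i(\rho)=\rho\,g_i(\rho)\,d\rho$, I would integrate by parts in $\rho$. The boundary contribution is $w(R(\theta),\theta)\,\Psi_i(R(\theta))-w(0,\theta)\,\Psi_i(0)$, which vanishes: indeed $R(\theta)\max(|\cos\theta|,|\sin\theta|)=2N$, so one of the two factors of $w(R(\theta),\theta)$ is zero, and $\Psi_i(0)=0$. Hence the inner integral equals $-\int_0^{R(\theta)}\partial_\rho w(\rho,\theta)\,\Psi_i(\rho)\,d\rho$, where $|\partial_\rho w(\rho,\theta)|\le 2N(|\cos\theta|+|\sin\theta|)+2\rho|\cos\theta\sin\theta|\lesssim N+\rho$.

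It remains only to bound $\Psi_i$. The proof of Lemma \ref{leading} already exhibits the explicit primitive of $\rho\,g_i(\rho)$ in terms of Bessel functions — for instance $\Psi_2(t)=J_0(t)+tJ_1(t)-1$, and analogous closed forms for $i=4,7$ — so that \eqref{Bessel} gives $|\Psi_i(t)|\lesssim 1+\sqrt t$ uniformly in $t\ge 0$. Consequently, for every $\theta$,
\begin{equation*}
\Big|\int_0^{R(\theta)}\partial_\rho w(\rho,\theta)\,\Psi_i(\rho)\,d\rho\Big|\lesssim\int_0^{2\sqrt 2 N}(N+\rho)\big(1+\sqrt\rho\big)\,d\rho\lesssim N^{5/2},
\end{equation*}
uniformly in $\theta\in[0,2\pi]$; integrating over $\theta$ preserves this, and therefore the double integral is $O(N^{5/2})=o(N^3)$, which is the assertion.

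I expect the only delicate points to be the two that make the estimate work, both of them easy once noticed: that the weight $w(\cdot,\theta)$ vanishes at $\rho=R(\theta)$, so that the integration by parts leaves no boundary term; and the uniformity in the endpoint $t$ of the bound $|\Psi_i(t)|\lesssim 1+\sqrt t$, which follows at once from the explicit antiderivatives in the proof of Lemma \ref{leading} together with \eqref{Bessel}. As an alternative one may expand $w$ as a quadratic polynomial in $\rho$ and apply Lemma \ref{leading} and Remark \ref{1st-tech} directly to the integrals $\int_0^{R(\theta)}\rho^{k}g_i(\rho)\,d\rho$ for $k=1,2,3$, obtaining the same $O(N^{5/2})$ bound.
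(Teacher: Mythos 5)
Your argument is correct, and it reaches the required $o(N^3)$ bound by a route that differs from the paper's in one substantive respect. The paper also changes variables to the covariogram weight $(2N-|z_1|)(2N-|z_2|)$, but then splits the domain into the inscribed ball $B_{2N}(0)$ and the four corners: on the ball it does exactly what you describe as your ``alternative'' (expand the weight as a quadratic in $\rho$ and invoke Lemma \ref{leading} and Remark \ref{1st-tech} for $\int_0^{2N}\rho^k g_i(\rho)\,d\rho$, $k=1,2,3$), while on the corners it runs a separate oscillation argument, bounding the contribution by the integral between the two zero circles of $g_i$ closest to $\partial B_{2N}(0)$, which yields $O(N^{5/2})$ there. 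Your single integration by parts in $\rho$, with the key observation that $w(R(\theta),\theta)=0$ because $R(\theta)\max(|\cos\theta|,|\sin\theta|)=2N$, dispenses with the ball/corner decomposition entirely and treats the whole square at once; the price is only the uniform bound $|\Psi_i(t)|\lesssim 1+\sqrt t$, which indeed follows from the explicit antiderivatives already computed in the proof of Lemma \ref{leading} together with \eqref{Bessel} (and from $\Psi_i(t)=O(t^2)$ near $t=0$, since the $g_i$ are bounded). Both routes ultimately deliver $O(N^{5/2})$, and both correctly recognize that a modulus bound via Lemma \ref{radial} would only give $O(N^{7/2})$, so the cancellation in the Bessel oscillation must be retained; your version is arguably cleaner in that it localizes all the cancellation in the single primitive $\Psi_i$ rather than in a geometric argument about zero sets.
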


\begin{proof}[Proof of Lemma \ref{lem:oN3}]
	By a change of variables, we can write
	\begin{eqnarray*}
		&&\int_{[-N, N]^2}\int_{[-N, N]^2} g_i(\|x-y\|)dxdy = \\
		&& \qquad\int_{-2N}^{2N}\int_{-2N}^{2N} (2N-|x_1|)(2N-|x_2|)g_i\left(\left\| \begin{pmatrix}
			x_1 \\ 
			x_2
	\end{pmatrix}\right\|\right)dx_1 dx_2.
	\end{eqnarray*}
	This integral can be decomposed into the integral over the inscribed ball $B_{2N}(0)$ and the integral over the remaining domain. The integral over $B_{2N}(0)$ can be rewritten with spherical coordinates, and the bound then follows by the second statement of Lemma \ref{leading} and Remark \ref{1st-tech}. As for the remaining domain (the four ``corners'' of the rectangle), note that $g_2\left(\left\| \begin{pmatrix} x_1 \\   x_2 \end{pmatrix}\right\|\right)$, $g_4\left(\left\| \begin{pmatrix}  x_1 \\   x_2\end{pmatrix}\right\|\right)$ and $g_7\left(\left\| \begin{pmatrix}  x_1 \\  x_2 \end{pmatrix}\right\|\right)$ are radial functions that decay as $\frac{1}{\sqrt{N}}$ as $N$ tends to infinity. Moreover, their zero sets are (asymptotically) concentric circles with radii that are multiples of $2\pi$. Due to these properties, and also since the area of the remaining domain decays with distance from the origin, we can bound the integral over the remainder by two times the integral between two zero set circles closest to $\partial B_{2N}(0)$ of the absolute value of $g_2$, $g_4$ or $g_7$ times the factor $(2N-|x_1|)(2N-|x_2|)$. This integral is asymptotically bounded by the area of integration ($2\pi (2N) \cdot 2\pi$) multiplied by a bound over the integrand $(2N)^2\frac{1}{\sqrt{N}}$. This proves the statement also for the remaining domain.
\end{proof}

\begin{lemma}\label{lem:dominant}
	We have
	\begin{eqnarray*}
		&& \int_{-2N}^{2N}\int_{-2N}^{2N} (2N-|x_1|)(2N-|x_2|)g_2^2\left(\left\| \begin{pmatrix}
			x_1 \\ 
			x_2
		\end{pmatrix}\right\|\right)dx_1 dx_2\\
		&&\qquad \sim \int_{-2N}^{2N}\int_{-2N}^{2N} (2N-|x_1|)(2N-|x_2|)g_4^2\left(\left\| \begin{pmatrix}
			x_1 \\ 
			x_2
		\end{pmatrix}\right\|\right)dx_1 dx_2\\
		&&\qquad \sim \int_{-2N}^{2N}\int_{-2N}^{2N} (2N-|x_1|)(2N-|x_2|)g_7^2\left(\left\| \begin{pmatrix}
			x_1 \\ 
			x_2
		\end{pmatrix}\right\|\right)dx_1 dx_2\\&&\qquad =4   (2N)^3\,\frac{1}{\pi} \left(\frac{1-\sqrt{2}}{3} + \ln(1 + \sqrt{2}) \right)   +O(N^2\sqrt{N})
	\end{eqnarray*}
	
\end{lemma}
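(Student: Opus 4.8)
The plan is to reduce all three weighted integrals, up to the claimed $O(N^2\sqrt N)$ error, to one and the same explicit planar integral and then evaluate it in closed form. Write $w(x):=(2N-|x_1|)(2N-|x_2|)$ and $r:=\|x\|$. Feeding the large-argument expansion \eqref{Bessel} into the formulas of Lemma \ref{covs} and combining the leading cosine terms — the $r^{-2}$ correction hidden in $g_7(r)=-\tfrac{(r^2-3)J_2(r)}{r^2}$ being negligible — one finds
$$g_2(r)=\sqrt{\tfrac{2}{\pi r}}\cos\!\big(r-\tfrac\pi4\big)+O(r^{-3/2}),\quad g_7(r)=\sqrt{\tfrac{2}{\pi r}}\cos\!\big(r-\tfrac\pi4\big)+O(r^{-3/2}),\quad g_4(r)=-\sqrt{\tfrac{2}{\pi r}}\sin\!\big(r-\tfrac\pi4\big)+O(r^{-3/2}).$$
Squaring and using $2\cos^2\theta=1+\cos2\theta$, $2\sin^2\theta=1-\cos2\theta$ gives, for $i\in\{2,4,7\}$,
$$g_i^2(r)=\frac{1}{\pi r}\ \pm\ \frac{\sin(2r)}{\pi r}\ +\ O(r^{-2}),$$
with a sign depending on $i$ but with the \emph{same} non-oscillating main term $\tfrac1{\pi r}$ in all three cases.

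The second step is to show that only the $\tfrac1{\pi r}$ part contributes to the leading order. Splitting the domain into a fixed ball $B_R(0)$ and its complement: on $B_R(0)$ the integrand is bounded (Bessel functions are bounded) and $w\le 4N^2$, so that part is $O(N^2)$, and the same holds for $\tfrac{1}{\pi r}$ there since $r^{-1}$ is integrable in the plane. On the complement one substitutes the expansion above; by Lemma \ref{radial} the $O(r^{-2})$ remainder contributes at most a constant times $N^2\int_R^{2\sqrt2 N}r^{-1}\,dr=O(N^2\log N)=o(N^2\sqrt N)$, while for the oscillating term $\tfrac{\sin(2r)}{\pi r}$ the rescaling $x=2Nt$ turns $\int_{[-2N,2N]^2}w(x)\tfrac{\sin(2r)}{\pi r}\,dx$ into $(2N)^3\int_{[-1,1]^2}(1-|t_1|)(1-|t_2|)\tfrac{\sin(4N\|t\|)}{\pi\|t\|}\,dt$, which is $O(1/N)$: on $B_{1/N}(0)$ by the trivial bound, and on the complement (quadrant by quadrant, in polar coordinates) by one integration by parts in the radial variable, whose boundary term on $\partial[-1,1]^2$ vanishes because the amplitude $(1-|t_1|)(1-|t_2|)$ does, leaving only $O(1/N)$ contributions. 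Hence $\int_{[-2N,2N]^2}w(x)g_i^2(r)\,dx=\int_{[-2N,2N]^2}w(x)\tfrac{1}{\pi r}\,dx+O(N^2\sqrt N)$ for $i\in\{2,4,7\}$.

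It remains to compute the main term. By the invariance of the integrand under $x_j\mapsto -x_j$ it equals $\tfrac4\pi\int_0^{2N}\!\!\int_0^{2N}\tfrac{(2N-x_1)(2N-x_2)}{\sqrt{x_1^2+x_2^2}}\,dx_1dx_2$, and the substitution $x_j=2Nt_j$ gives $\tfrac{4(2N)^3}{\pi}\,K$ with $K:=\int_0^1\!\!\int_0^1\tfrac{(1-t_1)(1-t_2)}{\sqrt{t_1^2+t_2^2}}\,dt_1dt_2$. Expanding $(1-t_1)(1-t_2)=1-t_1-t_2+t_1t_2$ and handling the four terms separately — the constant term via polar coordinates over the unit square, giving $2\int_0^{\pi/4}\sec\theta\,d\theta=2\ln(1+\sqrt2)$, and the other three via the antiderivative $\int_0^1 s(s^2+t^2)^{-1/2}ds=\sqrt{1+t^2}-t$ — one obtains $K=\tfrac{1-\sqrt2}{3}+\ln(1+\sqrt2)$, hence the stated value $4(2N)^3\tfrac1\pi\big(\tfrac{1-\sqrt2}{3}+\ln(1+\sqrt2)\big)$; the two ``$\sim$'' equivalences in the statement follow at once.

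The main obstacle I expect is the rigorous bound on the oscillating contribution: near the origin there is no cancellation to exploit and one must instead use that the region is small, while the weight $w$ fails to be smooth along the coordinate axes, so the integration by parts has to be organised quadrant by quadrant; the other delicate point is simply bookkeeping the phase shifts of $J_0,\dots,J_4$ in \eqref{Bessel} carefully enough to be sure that the three squared covariances really share the \emph{same} main term $\tfrac1{\pi r}$.
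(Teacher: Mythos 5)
Your proposal is correct and follows essentially the same route as the paper's proof: reduce $g_2^2$, $g_4^2$, $g_7^2$ via the Bessel asymptotics \eqref{Bessel} to the common non-oscillating main term $\frac{1}{\pi r}$, discard the oscillatory and $O(r^{-2})$ remainders, and evaluate the rescaled integral $\int_0^1\int_0^1 (1-t_1)(1-t_2)(t_1^2+t_2^2)^{-1/2}\,dt_1\,dt_2 = \frac{1-\sqrt{2}}{3}+\ln(1+\sqrt{2})$ explicitly. The only point of divergence is how the oscillating term $\sin(2r)/(\pi r)$ is dismissed: the paper invokes the concentric-zero-circle argument of Lemma \ref{lem:oN3}, whereas you rescale and integrate by parts radially, quadrant by quadrant, which yields the same conclusion with a cleaner and more quantitative $O(N^2)$ bound.
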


\begin{proof}[Proof of Lemma \ref{lem:dominant}]
	First note that we can infer from the recurrence relations between Bessel functions that $g_2(r)\sim g_7(r)\sim J_0(r)$ and $g_4(r)\sim -J_1(r)$ as $r\to\infty$. Due to the asymptotics \eqref{Bessel} we moreover obtain
	$$J_0^2(r)\sim  \frac{2}{\pi r}\cos^2(r-\pi/4)=  \frac{1}{\pi r} (\cos(2r-\pi/2)+1) $$
	as well as
	$$J_1^2(r)\sim  \frac{2}{\pi r}\cos^2(r-\pi/2-\pi/4)=  \frac{1}{\pi r} (\cos(2r+\pi/2)+1). $$
	Note that, since the integrals appearing in this lemma all tend to infinity with $N\to\infty$, replacing Bessel functions with their approximations up to an additive error of a smaller order is not going to change the asymptotics (nor the relevant constants).
	Therefore,
	\begin{eqnarray*}
		&& \int_{-2N}^{2N}\int_{-2N}^{2N} (2N-|x_1|)(2N-|x_2|)\left(g_2^2\left(\left\| \begin{pmatrix}
			x_1 \\ 
			x_2
		\end{pmatrix}\right\|\right)-g_4^2\left(\left\| \begin{pmatrix}
			x_1 \\ 
			x_2
		\end{pmatrix}\right\|\right)\right)dx_1 dx_2\\ 
		&&\qquad \sim \int_{-2N}^{2N}\int_{-2N}^{2N} (2N-|x_1|)(2N-|x_2|) \frac{1}{\pi \left\| \begin{pmatrix}
				x_1 \\ 
				x_2
			\end{pmatrix}\right\|}2\cos \left(2\left\| \begin{pmatrix}
			x_1 \\ 
			x_2
		\end{pmatrix}\right\| +\frac{\pi}{2}\right)dx_1 dx_2,
	\end{eqnarray*}
	and we can see with the same argument as in the previous lemma that this integral is of order $o(N^3)$.\\
	To show that all the integrals are indeed of order $N^3$, one can use the fact that the oscillating part (involving $\cos(2r+\pi/2)$ and considered in the computation above) is not contributing to the asymptotics. 
	Indeed from the asymptotics above we have 
	\[ g_2^2(r)= \frac{1+\sin (2r)}{\pi r}+O\left(\frac{1}{r^2}\right)\]
	as $r\to \infty$.
	
	This allows us to write explicitly, as $N\to\infty$,
\begin{eqnarray*}
    &&\int_{-2N}^{2N}\int_{-2N}^{2N} (2N-|x_1|)(2N-|x_2|)g_2^2\left(\left\| \begin{pmatrix}  x_1 \\   x_2\end{pmatrix}\right\|\right)dx_1 dx_2\\
    && \sim  \int_{-2N}^{2N}\int_{-2N}^{2N} (2N-|x_1|)(2N-|x_2|) \frac{1}{\pi \left\| \begin{pmatrix}
  x_1 \\ 
  x_2
\end{pmatrix}\right\|}dx_1 dx_2\\
&& = 4 \int_{0}^{2N}\int_{0}^{2N} (2N-x_1)(2N-x_2) \frac{1}{\pi \sqrt{x_1^2+x_2^2}}dx_1 dx_2\\
&& = 4 \cdot (2N)^2\int_{0}^{1}\int_{0}^{1} (2N-2N y_1)(2N-2N y_2) \frac{1}{\pi\cdot 2N\sqrt{y_1^2+y_2^2}}dy_1 dy_2\\
&&  = 4   (2N)^3\int_{0}^{1}\int_{0}^{1} (1- y_1)(1-y_2) \frac{1}{\pi \sqrt{y_1^2+y_2^2}}dy_1 dy_2 \\
&& =4   (2N)^3\,\frac{1}{\pi} \left(\frac{1-\sqrt{2}}{3} + \ln(1 + \sqrt{2})\right)    = 4   (2N)^3\, 
\,\frac{1}{\pi} \left(\frac{1-\sqrt{2}}{3} +\ln(1 + \sqrt{2})\right) \approx 4   (2N)^3\, 0.2366
\end{eqnarray*}

\end{proof}
 The following Lemma \ref{coefficients} provides the coefficients of the dominating chaos components (see proof of Proposition \ref{th:variance} for the precise definition); the proof exploits \cite[Proposition 5]{CM18} and we omit it for the sake of brevity. 

\begin{lemma}\label{coefficients}
	We have
	\begin{eqnarray*}
		&&c_2 = -\frac{1}{\sqrt{\pi}}\frac{1}{4}u\phi(-u),\\
		&& c_{5} = \frac{1}{\sqrt{\pi}}\left(\frac{1}{6}u\phi (-u)+\frac{1}{6}u^3\phi (-u)+\frac{1}{4}\Phi (-u)\right),\\
		&& c_{z}= \frac{1}{\sqrt{\pi}}\left(\frac{1}{12}u\phi (-u)+\frac{1}{12}u^3\phi (-u)\right),\\
		&& c_{z5}= \frac{1}{\sqrt{\pi}}\left(\frac{1}{3\sqrt{8}}u\phi (-u)+\frac{1}{3\sqrt{8}}u^3\phi (-u)+\frac{1}{\sqrt{8}}\Phi (-u)\right),\\
		&& c_{25}=c_{2z}=0.
	\end{eqnarray*}
\end{lemma}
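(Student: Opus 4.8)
The plan is to compute the Hermite coefficients of the integrand appearing in \eqref{L2integral}, after the change of variables to the i.i.d.\ vector $(Y_1,Y_2,Z,Y_4,Y_5)$ carried out just before the statement of the lemma. Recall that the integrand has been rewritten there as
$$
\left(\tfrac{3}{8}\left(\tfrac{\sqrt 8}{3}Z+\tfrac13 Y_5\right)Y_5-\tfrac18 Y_4^2\right)\,
{1}_{(-\infty,-u]}\!\left(\tfrac{1}{\sqrt3}Z+\tfrac{\sqrt2}{\sqrt3}Y_5\right)\,
\delta_\varepsilon\!\left(\tfrac1{\sqrt2}Y_1\right)\delta_\varepsilon\!\left(\tfrac1{\sqrt2}Y_2\right),
$$
and, letting $\varepsilon\to 0$, the factors $\delta_\varepsilon$ contribute only through the value at $0$ of the corresponding Hermite polynomials in $Y_1,Y_2$, i.e.\ they force $n_1=n_2=0$ (odd Hermite polynomials vanish at $0$) and contribute a deterministic constant independent of $u$. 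Hence, up to this universal normalising constant, the coefficients $c_2,c_5,c_z,c_{z5},c_{25},c_{2z}$ are exactly the second-chaos Hermite coefficients of the function
$$
\Psi(Z,Y_4,Y_5):=\left(\tfrac{\sqrt8}{8}ZY_5+\tfrac18 Y_5^2-\tfrac18 Y_4^2\right){1}_{(-\infty,-u]}\!\left(\tfrac{1}{\sqrt3}Z+\tfrac{\sqrt2}{\sqrt3}Y_5\right)
$$
with respect to the i.i.d.\ standard Gaussian triple $(Z,Y_4,Y_5)$.

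The first step is to note that $Y_4$ enters only through $Y_4^2=H_2(Y_4)+1$ and is independent of $(Z,Y_5)$ and of the indicator. Therefore the $H_2(Y_4)$ component decouples completely: its coefficient is $-\tfrac18\,\mathbb{E}\bigl[{1}_{(-\infty,-u]}(\cdots)\bigr]=-\tfrac18\Phi(-u)/\sqrt{\pi}\cdot(\text{const})$ — but this is a pure $H_2(Y_4)$ term, which does not appear among the six listed dominant monomials, consistent with $g_1,g_3,g_5,g_6$-type covariances being negligible; for the six listed coefficients only the $(Z,Y_5)$-part of $\Psi$ matters, namely $\widetilde\Psi(Z,Y_5):=\bigl(\tfrac{\sqrt8}{8}ZY_5+\tfrac18 Y_5^2-\tfrac18\bigr){1}_{(-\infty,-u]}\bigl(\tfrac1{\sqrt3}Z+\tfrac{\sqrt2}{\sqrt3}Y_5\bigr)$. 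The second step is then a bivariate Gaussian computation: expand $\widetilde\Psi$ against $H_2(Z)$, $H_2(Y_5)$, $H_1(Z)H_1(Y_5)$, $H_1(Z)$, $H_1(Y_5)$, $H_0$, which reduces to evaluating expectations of the form $\mathbb{E}[p(Z,Y_5){1}_{\{aZ+bY_5\le -u\}}]$ for low-degree polynomials $p$. Rotating $(Z,Y_5)$ so that $aZ+bY_5=\|(a,b)\|\,W$ with $W$ standard Gaussian (here $(a,b)=(1/\sqrt3,\sqrt2/\sqrt3)$ so $\|(a,b)\|=1$, which is exactly the statement ``$f$ is a Laplace eigenfunction'' manifesting itself as a unit-norm constraint), all these expectations collapse to one-dimensional integrals $\int_{-\infty}^{-u} w^k\phi(w)\,dw$, which are elementary combinations of $\Phi(-u)$, $\phi(u)$, $u\phi(u)$, $u^2\phi(u)$. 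Collecting terms and matching against the normalisation yields the displayed formulas, in particular the vanishing $c_{25}=c_{2z}=0$, which reflects an orthogonality: after the rotation, the mixed terms $H_2(Y_2)H_1(Z)$-type cross products integrate to zero because they pair an odd polynomial in the direction orthogonal to $W$ against a function of $W$ alone.

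The main obstacle I anticipate is bookkeeping rather than conceptual: correctly tracking the chain of normalising constants $\sqrt{2}$, $\sqrt{8/3}$, $\sqrt{2\pi\lambda_f}$ through the $\delta_\varepsilon\to\delta_0$ limit and through the change of basis $Y_3,Y_5\mapsto Z,Y_5$, and making sure the $\tfrac12$ factors in the definition \eqref{D2} of $\mathcal D_2$ (which come from $H_2$ having ``$\tfrac12$-weight'' in the $L^2$ normalisation, $\|H_2\|^2=2$) are consistently placed. This is precisely the point where the cited \cite[Proposition 5]{CM18} does the heavy lifting: it provides the Hermite expansion of the determinant-times-indicator functional for a general Gaussian field with the relevant covariance structure, and the present lemma is obtained by specialising its coefficients to the Berry covariance \eqref{covariance} with $\lambda_f=\tfrac12$ and reading off the $u$-dependence. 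For this reason the authors state that they omit the details; I would likewise verify the specialisation against \cite{CM18} and against the known spherical coefficients in \cite{CMWa} as a consistency check, rather than redoing the Gaussian integrals from scratch.
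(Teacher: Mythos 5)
Your overall strategy --- expand the integrand of \eqref{L2integral} in Hermite polynomials of the i.i.d.\ vector $(Y_1,Y_2,Z,Y_4,Y_5)$, let the $\delta_\varepsilon$ factors degenerate to point evaluations, decouple $Y_4$, and reduce the $(Z,Y_5)$ block to one-dimensional integrals $\int_{-\infty}^{-u}w^k\phi(w)\,dw$ by rotating onto $W=\tfrac{1}{\sqrt3}Z+\tfrac{\sqrt2}{\sqrt3}Y_5$ --- is the right computation, and it is more self-contained than the paper, which simply invokes \cite[Proposition 5]{CM18} and omits all details. However, there is one concrete error that breaks your plan for $c_2$. The limiting delta factor in $Y_2$ does \emph{not} force $n_2=0$: its contribution to the coefficient of $H_{n_2}(Y_2)$ is proportional to $H_{n_2}(0)$, which vanishes for odd $n_2$ but equals $-1$ for $n_2=2$. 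The coefficient $c_2$ is by definition the coefficient of $H_2(Y_2)$, so it arises precisely from the $n_2=2$ term of the delta expansion paired with the \emph{zeroth} Hermite coefficient of the $(Z,Y_4,Y_5)$ block; it is proportional to $H_2(0)\cdot\mathbb{E}\bigl[(\tfrac38 Y_3Y_5-\tfrac18 Y_4^2)\mathbf{1}_{\{W\le -u\}}\bigr]=-\tfrac14 u\phi(u)$, and it cannot be read off from the second-chaos coefficients of your $\Psi(Z,Y_4,Y_5)$. Under your reduction (``the coefficients $c_2,\dots$ are exactly the second-chaos Hermite coefficients of $\Psi$'') one would get $c_2=0$, contradicting the very formula you are proving. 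Relatedly, your closing explanation of $c_{25}=c_{2z}=0$ via an orthogonality ``after the rotation'' is a misattribution: these vanish simply because they carry a factor $H_1(0)=0$ from the $Y_2$-delta, which is the correct reason you already gave parenthetically earlier.

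Two smaller remarks. First, once the $n_2=2$ mechanism is restored, your rotation argument does yield the stated $u$-dependence of the remaining coefficients, though the required moments go up to $\int_{-\infty}^{-u}w^4\phi(w)\,dw$, so $u^3\phi(u)$ appears as well, not only terms up to $u^2\phi(u)$. Second, your worry about the chain of normalising constants is well founded and should not be waved away by ``matching against the normalisation'': by Lemma \ref{lem:EL} the two delta factors contribute $(2\pi\lambda_f)^{-1}=1/\pi$ to every coefficient, and with that convention the order-zero term reproduces the mean \eqref{meanEC} exactly, whereas the prefactor displayed in Lemma \ref{coefficients} is $1/\sqrt{\pi}$ (which is what makes the variance computation in the proof of Proposition \ref{th:variance} close). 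Deciding which convention the $c_i$ are stated in is precisely the bookkeeping you would have to carry out explicitly; it cannot be delegated to a consistency check against \cite{CMWa}.
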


\section*{Funding}
The author E. D.B.  has been supported by the French government, through the 3IA C\^{o}te d'Azur Investments in the Future project managed by the National Research Agency (ANR) with the reference number ANR-19-P3IA-0002. The author A.P. T. is a member of INdAM-GNAMPA. 

\bibliographystyle{plain}

\end{document}